\documentclass[11pt,a4paper]{article}
\usepackage[english]{babel}
\usepackage[utf8]{inputenc}
\UseRawInputEncoding
\usepackage{setspace}
\usepackage[left=2.5cm,right=2.5cm,top=2.5cm,bottom=2.5cm]{geometry}
\usepackage{epsfig}
\usepackage{titlesec}
\usepackage{sectsty}
\usepackage{enumerate,amsmath,amsthm,amsfonts,amssymb, calrsfs, cases,graphicx, comment, color, soul,todonotes,bbm, bm}
\usepackage{float,placeins}

\usepackage{booktabs} 

\newcommand{\myfrac}[2]{
    \setbox0\hbox{$#1$}        
    \dimen0=\wd0               
    \setbox1\hbox{$#2$}         
    \dimen1=\wd1               
    \ifdim\wd0<\wd1            
        \dfrac{#1\hfill}{#2}  
    \else                      
        \dfrac{#1}{#2\hfill}   
    \fi
}

\usepackage{amscd}
\usepackage{mathtools}

\usepackage{xcolor}
\usepackage[normalem]{ulem}
\usepackage{cancel}
\usepackage[shortlabels]{enumitem}
\usepackage{amstext}
\usepackage{verbatim}
\usepackage{catchfile}
\usepackage{autobreak}

\newtheorem{theo}{Theorem}[section]
\newtheorem{lemma}[theo]{Lemma}
\newtheorem{prop}[theo]{Proposition}

\newtheorem{cor}[theo]{Corollary}
\newtheorem{defi}[theo]{Definition}
\newtheorem{claim}[theo]{Claim}

\newtheorem{rem}[theo]{Remark}

\DeclareMathAlphabet{\pazocal}{OMS}{zplm}{m}{n}

\newcommand{\Ab}{\pazocal{A}}
\newcommand{\Bb}{\pazocal{B}}
\newcommand{\Cb}{\pazocal{C}}
\newcommand{\Db}{\pazocal{D}}

\providecommand{\keywords}[1]
{
	\small	
	\textit{Keywords:} #1
}
\providecommand{\subjclass}[2][2020]
{
	\small
	\emph{AMS Mathematics Subject Classification 2020:} #2
}

\title{Mean field games with terminal state constraints}

\author{Luciano Campi \footnote{Department of Mathematics ``Federigo Enriques'', University of Milan, Via Saldini 50, 20133, Milan, Italy. E-mail address: luciano.campi@unimi.it}
\and Luca Di Persio \footnote{College of Mathematics, 
		Department of Computer Science, University of Verona, strada le Grazie 15, 37134, Verona, Italy. E-mail address: luca.dipersio@univr.it}
	\and Viktorya Vardanyan \footnote{Department of Mathematics, University of Trento, Via Sommarive 14, 38123,  Povo (TN), Italy. E-mail address: viktorya.vardanyan@univr.it}
}

\date{} 
\usepackage{caption}
\usepackage{hyperref}

\begin{document}
	\maketitle
	
\begin{abstract} 
		
We study a mean field game (MFG) of state and control with state dynamics described by stochastic differential equations driven by both idiosyncratic and common noise, and subject to the constraint that the terminal state variable belongs to a nonempty, convex, closed set. The mean-field interaction enters both the dynamics and the costs through state and control. Inspired by the work of El Karoui, Peng and Quenez \cite{Karoui Peng} and Ji and Zhou \cite{Ji Zhou}, we formulate an equivalent auxiliary MFG problem and derive the stochastic maximum principle for an auxiliary optimization problem under fixed flows. By means of a suitable forward-backward stochastic differential equation (FBSDE) of conditional McKean-Vlasov type, we establish a characterization of MFG solutions via such a system. Moreover, we prove the existence and uniqueness of solutions for the FBSDE system in the specific case where the running cost is zero and the state coefficients are linear. Additionally, we obtain some uniqueness results \`a la Lasry-Lions and we apply our findings to MFGs of optimal investment with a quadratic relative performance criterion. 
\end{abstract}
	
\keywords{Mean field games, state constraints, stochastic maximum principle, Ekeland's variational principle, optimal investment.}\smallskip

\subjclass[2020]{49N80, 93E20, 49N90.}

\section{Introduction}
Mean field games (MFGs) were introduced by Lasry and Lions in \cite{Lasry Lions} and, independently, by Huang, Malham\'e, and Caines in \cite{Huang}. MFGs are limiting models of large population symmetric stochastic differential games with mean field type interaction as the number of players $N$ goes to infinity. Moreover, in these games, it is assumed that the influence of each player on the overall system becomes negligible as $N$ increases.

In this paper, we study MFGs characterized by both idiosyncratic and common noise, with a constrained terminal state. Common noise affects all players simultaneously, introducing correlated uncertainty into the system. This is in contrast to idiosyncratic noise, which involves each player independently (see, e.g., \cite{Carmona Delarue I}, \cite{Carmona Delarue II}, \cite{Carmona Delarue P}).\par
 The dynamics of the private states of player $i \in \{1,\ldots,N\}$ in the $N$-player game are given by
\begin{equation*}
\begin{cases}
    \begin{aligned}
       dX_t^i = b(t,X_t^i,  \alpha_t^i, \beta_t^i, \bar{\mu}_t^N)dt & + \sigma(t,X_t^i,  \alpha_t^i,  \bar{\mu}_t^N)dW_t^i + \sigma_0(t,X_t^i,   \beta_t^i, \bar{\mu}_t^N)dW_t^0 
    \end{aligned}\\
X_0^i=a,\  X_T^i \in \mathcal{D} \\
\end{cases}
\end{equation*}
where \( \alpha_t^i \), \( \beta_t^i \) are the controls of player \( i \) at time $t$, and \( \bar{\mu}_t^N \in \mathcal P(\mathbb R^n \times \mathbb R^{n\times d} \times \mathbb R^{n\times d})\) denotes the empirical distribution of the state-control triples:
\[
\bar{\mu}_t^N = \frac{1}{N} \sum_{j=1}^N \delta_{(X_t^j, \alpha_t^j, \beta_t^j)}.
\]
$W^0, W^1,\ldots, W^N$ are $N+1$ independent $d$-dimensional  Brownian motions. $W^0$ is said to be the common source of noise and $(\mathcal{F}^0_t)_{t\in [0,T]}$ is the filtration generated by $W^0$ satisfying the usual conditions of completeness and right-continuity. $\mathcal{D}$ is a nonempty closed convex subset in $\mathbb{R}^n$.\par
Each player chooses a control to minimize the following expected cost functional
\begin{equation*}
   J^i(\alpha^1,\ldots,\alpha^N, \beta^1,\ldots, \beta^N)=\mathbb{E}\left[\int _0^T \bar{\ell}\left(t,X_t^i,  \alpha_t^i, \beta_t^i, \bar{\mu}_t^N \right)dt+\phi \left(X_T^i,\bar{\mu}_T^{X,N} \right)\right],
\end{equation*}
for given functions $\bar \ell$ and $\phi$. In this $N$-player  symmetric stochastic differential game, the players interact through the empirical distribution of their private states and controls, where the empirical distribution of the states  is given by
$
\bar{\mu}_t^{X,N} = \frac{1}{N} \sum_{i=1}^{N} \delta_{X_t^i} .
$
As \( N \to \infty \), we expect the empirical distributions \( \bar{\mu}_t^N \) to converge, at equilibrium, to the conditional distribution \( \mathcal{L}(X_t, \alpha_t, \beta_t \mid \mathcal{F}_t^0) \), given the common noise \( W^0 \). Consequently, we will also have that \( \bar{\mu}_t^{X,N} \) converges to \( \mathcal{L}(X_t \mid \mathcal{F}_t^0) \). Therefore, the limiting  MFG problem  can be roughly described as follows:
\begin{enumerate}
    \item (Optimality condition) For  a given flow $(\mu_t)_{t\in [0, T]}$, adapted to the common noise filtration, one solves the following optimization problem
\begin{equation}
\label{forward CN 1}
     \inf_{(\alpha, \beta) \in \mathcal{A},\, X_T \in \mathcal D} \mathbb{E}\left[\int _0^T \bar{\ell}(t,X_t,  \alpha_t, \beta_t, \mu_t)dt+\phi(X_T,\mu_T^X)\right],
\end{equation}
where $(\mu^X _t)_{t\in [0,T]}$ denotes the flow of first marginals of $(\mu_t)_{t\in [0,T]}$, and
\begin{align*}
    &dX_t  = b(t,X_t,  \alpha_t, \beta_t, \mu_t)dt+\sigma(t,X_t, \alpha_t, \mu_t)dW_t+\sigma_0(t,X_t, \beta_t, \mu_t)dW_t^0, \\ & X_0=a.
 \end{align*}
 $\mathcal A$ denotes the set of all admissible controls, which will be rigorously defined in the next section.
\item (Fixed point or consistency condition) 
Find a stochastic  flow \( (\hat\mu_t)_{t \in [0,T]} \) such that 
\[ \hat\mu_t=\mathcal L(\hat X_t, \hat\alpha_t, \hat\beta_t | \mathcal F_t ^0), \quad t\in [0,T],\]
where $(\hat\alpha_t, \hat\beta_t)$ is the solution of the above optimization problem and $\hat X$ denotes the corresponding state.  
\end{enumerate} 
 
In the case of deterministic dynamics, MFGs with state constraints are studied by Cannarsa, Capuani, and Cardaliaguet  in \cite{CanCap}, \cite{CanCar}, and \cite{Cannarsa}. They solve the problem via a system of partial differential equations (PDEs) associated with the state-constrained MFG. 
Under fairly general assumptions in \cite{CanCap},  the existence of MFG equilibria is shown. In the same paper, the uniqueness of the solution is also derived under an adapted version of the Lasry-Lions monotonicity condition. In \cite{CanCar}, they address the regularity of solutions and obtain solutions that are more regular. In \cite{Cannarsa}, the authors obtain a more useful regularity property of generalized solutions, which is a global semiconcavity property. 
Porretta and Ricciardi have analysed state-constrained MFGs of second order in \cite{Porretta}. They consider a model in which the individual dynamics are affected by additive idiosyncratic noise. The equilibrium is characterized by a solution to a second-order MFG system, in which the value function diverges at the boundary and the players' density is smooth near it. Graber and Mayorga in \cite {GM} extend the results of \cite{CanCap} to the case of an MFG of controls, where the controls are velocities. 
\par
 In \cite{Daudin II} Daudin addresses the convergence problem of mean-field control theory with state constraints and non-degenerate idiosyncratic noise. The pre-limit problem involves a large number of interacting particles subject to symmetric, almost-sure constraints. In contrast, in the limit, the constraint acts on the law of a typical particle.  The limit problem is an optimal control problem of the Fokker-Planck equation under state constraints on the space of probability measures, which has been studied by Daudin  in \cite{Daudin I}.\par
To address the terminal state-constrained problem 
\eqref{forward CN 1}, we use the approach introduced by El Karoui, Peng, and Quenez in \cite{Karoui Peng}. They use the backward approach, proving a version of the stochastic maximum principle by taking the terminal wealth as control. Ji and Zhou, in  \cite{Ji Zhou}, use the backward formulation and Ekeland's variational principle \cite{Ekeland} to derive the stochastic maximum principle (SMP). Peng in \cite{Peng} proves the SMP for stochastic optimal control problems with an endpoint constraint on the state variable using Ekeland's variational principle. Buckdahn, Djehiche, and Li in \cite{Buckdahn Li 1}, \cite{Buckdahn Li 2} study the optimal control for stochastic differential equations (SDEs) of mean-field type and obtain a Peng-type SMP. 

Inspired by the approach developed in those articles, we formulate an auxiliary optimization problem by converting the terminal state constraint into a control constraint. This transformation allows for a more tractable analysis and solution approach. In the auxiliary problem, the initial condition of the original problem becomes a constraint. Using Ekeland's variational principle, we derive a SMP that characterizes the optimal terminal state. Specifically, the transversality condition provides a necessary condition for the optimal terminal state. A similar transversality condition can be observed in the book by Yong and Zhou \cite[Chap. 3, Sec. 6]{Yong Zhou},  where they establish the SMP for optimal control problems with constrained cost functionals. Necessary and sufficient conditions for the SMP are obtained, leading to a suitable FBSDE system of conditional McKean-Vlasov type.
We study the well-posedness of such a system in the one-dimensional linear case. Finally, to illustrate the resolution method developed in this paper, we provide a financial application to an optimal investment problem for a representative risk-neutral agent with a quadratic relative performance criterion in two scenarios with and without common noise. \par
The paper's structure is as follows: Section 2 introduces the problem setting and establishes the equivalence between the original MFG with terminal state constraints and the auxiliary formulation. In Section 3, we derive the SMP (both necessary and sufficient conditions), establish a verification theorem, discuss the uniqueness of MFG solutions, and prove the existence of the solution to the FBSDE for a specific linear case.  In Section 4, we present the financial applications with some numerics and economic interpretations. Finally, the appendix contains the proofs of the necessary and sufficient conditions, as well as statements and proofs of several intermediate results.

\section{Setting}
We aim to analyze MFGs with terminal-state constraints, subject to both idiosyncratic and common noise. We consider a fairly general model in which the coefficients of the controlled stochastic differential equation (SDE) describing the time evolution of the state variable depend on the conditional joint law of the state variable and the controls.

Let $(\Omega, \mathcal{F},(\mathcal{F}_t)_{t\in [0,T]}, \mathbb{P})$  be a given complete filtered probability space supporting two $d$-dimensional independent $(\mathcal F_t)$-Brownian motions $W$ and $W^0$.  The filtration generated by $W^0$ will be denoted  by $(\mathcal{F}^0_t)_{t\in [0,T]}$. Let $p\geq 2$. We denote by $\mathcal H^p _{n\times d}$ the space of all $(\mathcal{F}_t)$-progressively measurable processes $Z$ with values in $\mathbb{R}^{n\times d}$ such that $\|Z\|_{\mathcal H^p _{n\times d}}^p:= \mathbb E [(\int_0^T |Z_t|^2 dt )^{p/2}]< \infty$, and by $\mathcal S^p _n$, the space of all $(\mathcal{F}_t)$-progressively measurable continuous processes  $Z$ with values in $\mathbb{R}^n$ such that $\|Z\|^p _{\mathcal S^p _n} :=\mathbb E [\sup_{0 \leq t \leq T}|Z_t|^p]< \infty$. When $d=n=1$ we will simply write $\mathcal H^p$ and $\mathcal S^p$. All filtrations are assumed to satisfy the usual conditions of completeness and right-continuity.

Let $\mathcal{P}_2(\mathbb{R}^n)$ denote the set of probability measures on $\mathbb{R}^n$ with finite second moments, meaning that for any \( \mu \in \mathcal{P}_2(\mathbb{R}^n) \),
\[
    M_2 (\mu) := \left(\int_{\mathbb{R}^n} |x|^2 d\mu(x)\right)^{1/2} < \infty.
\]

We denote by $\mathbb R^+$ the set of all positive real numbers, that is, $\mathbb R^+:=(0, \infty).$ For ease of notation, we will denote the scalar product between two vectors $x$ and $y$ as $xy$.

The state evolves according to the following SDE
\begin{equation}
    \label{SDE CN}
\begin{cases}
    \begin{aligned}
       dX_t=b(t,X_t,  \alpha_t, \beta_t, \mu_t)dt+\sigma(t,X_t, \alpha_t, \mu_t)dW_t 
+\sigma_0(t,X_t, \beta_t, \mu_t)dW_t^0 
    \end{aligned}\\
X_0=a
\end{cases}
\end{equation}
with the terminal state constraint $X_T \in \mathcal{D}$, where $\mathcal{D}$ is a nonempty closed convex subset in $\mathbb{R}^n$. In the SDE above, $\mu_t$ is a fixed process adapted  to the filtration $(\mathcal F_t ^0)$, $a \in \mathbb{R}^n$ is given, and the following coefficients 
\begin{itemize}
    \item $b:[0, T ] \times\mathbb{R}^n\times  \mathbb{R}^{n\times d}\times \mathbb{R}^{n\times d} \times  \mathcal P_2(\mathbb R^n \times \mathbb R^{n\times d} \times \mathbb R^{n\times d}) \to \mathbb{R}^n$ ,
    \item $(\sigma, \sigma^0):[0, T ] \times\mathbb{R}^n\times  \mathbb{R}^{n\times d} \times  \mathcal P_2(\mathbb R^n \times \mathbb R^{n\times d} \times \mathbb R^{n\times d}) \to \mathbb{R}^{n\times d} \times \mathbb{R}^{n\times d}$,
\end{itemize}
are given measurable functions. More specific assumptions on these coefficients will follow.

The objective is to minimize the following cost functional over admissible control strategies $\alpha$ and $\beta$:
\begin{equation} 
\label{Cost functional CN}
   J(\alpha, \beta)=\mathbb{E}\left[\int _0^T \bar{\ell}(t,X_t,  \alpha_t, \beta_t, \mu_t)dt+\phi(X_T,\mu_T^X) \right],
\end{equation}
where 
\begin{itemize}
    \item $\bar{\ell}:[0, T ] \times\mathbb{R}^n\times  \mathbb{R}^{n\times d}\times \mathbb{R}^{n\times d} \times  \mathcal P_2(\mathbb R^n \times \mathbb R^{n\times d} \times \mathbb R^{n\times d}) \to \mathbb{R}$ is the running cost and
    \item $\phi :\mathbb{R}^n\times\mathcal{P}_2(\mathbb{R}^n) \to \mathbb{R}$ is the terminal cost.
\end{itemize}
Admissible controls $\alpha$ and $\beta$ are defined as $(\mathcal F_t)$-progressively measurable $\mathbb R^{n \times d}$-valued stochastic processes satisfying the square-integrability condition
\[\mathbb{E} \left[\int_0^T \left( |\alpha_t|^2 + |\beta_t|^2 \right) dt \right] < \infty.
    \label{admissibility condition}\]  
The set of admissible controls is denoted by $\mathcal{A}$, i.e., $\mathcal A = \mathcal H^2 _{n\times d} \times \mathcal H^2 _{n\times d}$.

Before giving the definition of solution of the MFG with terminal state constraint, we set the following standing assumptions.

\begin{enumerate}[label= (A\arabic*)]
\item $b, \sigma, \sigma_0$, and $\bar{\ell}$ are continuous in their arguments and continuously differentiable in $(x, \alpha, \beta)$, while $\phi$ is continuous and continuously differentiable in $x$;
\item the first  derivatives of $b$, $\sigma, \sigma_0$ in $(x, \alpha, \beta)$ are bounded;
\item the first derivatives of \( \bar{\ell}(t, x, \alpha, \beta, \mu) \) with respect to \( x, \alpha, \beta \) satisfy 
\[
|\partial_{(x,\alpha,\beta)} \bar{\ell}(t, x, \alpha, \beta, \mu)| \leq C \left(1 + |x| + |\alpha| + |\beta| + M_2(\mu)\right), 
\]
for all $ x \in \mathbb{R}^n$, $\mu \in \mathcal{P}_2(\mathbb R^n \times \mathbb R^{n\times d} \times \mathbb R^{n\times d})$, $t \in [0,T]$ and $\alpha, \beta \in \mathbb{R}^{n \times d}$;
the first derivative of \( \phi(x, \mu) \) with respect to \( x \) satisfies
\[
|\partial_x \phi(x, \mu)| \leq C \left(1 + |x| + M_2(\mu)\right), \quad \forall x \in \mathbb{R}^n, \mu \in \mathcal{P}_2(\mathbb R^n );
\]
\item[(A4)] there exists a constant $C > 0$ such that, for all $t \in [0,T]$ and $\mu \in \mathcal{P}_2(\mathbb R^n \times \mathbb R^{n\times d} \times \mathbb R^{n\times d})$, $b, \sigma$ and $\sigma_0$ satisfy the following growth condition at the origin:
\[
|b(t,0,0,0,\mu)| + |\sigma(t,0,0,\mu)| + |\sigma_0(t,0,0,\mu)| \leq C(1 + M_2(\mu)).
\]
\end{enumerate}
Under these conditions, for given $\alpha, \beta \in \mathcal{A}$ and for a given adapted process $\mu_t$ with values  in the space $\mathcal{P}_2(\mathbb R^n \times \mathbb R^{n\times d} \times \mathbb R^{n\times d})$ such that $\mathbb E \left[\int_0^T M_2(\mu_t)^2 dt\right]<\infty$ , there exists a unique solution  $X \in \mathcal S^2 _n$ to \eqref{SDE CN}, \cite[Theorem 6.16, p.49]{Yong Zhou}.

\begin{defi} \label{def-MFG-constraint} A MFG solution with terminal state constraint is any tuple $(\hat \alpha, \hat\beta, \hat \mu)$ of processes such that
\begin{enumerate}
\item $(\hat\alpha, \hat\beta) \in \mathcal A$ and  $\hat \mu$ is a $(\mathcal F_t ^0)$-progressively measurable process with values in $\mathcal{P}_2(\mathbb R^n \times \mathbb R^{n\times d} \times \mathbb R^{n\times d})$, such that $\mathbb E \left[\int_0^T M_2(\hat \mu_t)^2 dt\right]<\infty$;
\item (Optimality condition) Given the flow $\hat \mu$, the pair $(\hat \alpha, \hat \beta)$ solves the following optimal control problem
\begin{equation}
\label{forward CN}
     \inf_{(\alpha, \beta)\in \mathcal A,  X_T \in \mathcal D} \mathbb{E}\left[\int _0^T \bar{\ell}(t,X_t,  \alpha_t, \beta_t, \hat\mu _t)dt+\phi(X_T,\hat\mu^X_T)\right],
\end{equation}
where
\begin{equation}
\begin{aligned}
    dX_t=b(t,X_t,  \alpha_t, \beta_t, \hat\mu_t)dt+\sigma(t,X_t, \alpha_t, \hat\mu_t)dW_t
    +\sigma_0(t,X_t, \beta_t, \hat\mu_t)dW_t^0, \quad  X_0=a;
 \end{aligned}   
\end{equation}
\item (Consistency condition)  For all $t\in [0,T]$, we have \[\hat\mu_t=\mathcal{L}(\hat X_t, \hat\alpha_t, \hat\beta_t|\mathcal{F}_t^0),\] where $\hat X$ denotes the state controlled by $(\hat\alpha, \hat\beta)$.    \end{enumerate}
\end{defi}

In order to solve the above constrained stochastic optimal control problem \eqref{forward CN}, we will provide an equivalent backward formulation of it as in \cite{Karoui Peng,Ji Zhou}. We will make use of the following extra assumption:
\begin{enumerate}
\item[(A5)] there exists $k_1 >0$ such that 
\[ |\sigma(t,x, \alpha_1, \mu)-\sigma(t,x, \alpha_2, \mu)| + |\sigma_0(t,x, \beta_1, \mu)-\sigma_0(t,x, \beta_2, \mu)| \geq k_1(|\alpha_1-\alpha_2| + |\beta_1-\beta_2|),\] 
for all $ x \in \mathbb{R}^n$, $\mu \in \mathcal{P}_2(\mathbb R^n \times \mathbb R^{n\times d} \times \mathbb R^{n\times d})$, $t \in [0,T]$ and $\alpha_1, \alpha_2, \beta_1,\beta_2 \in \mathbb{R}^{n \times d}$.

\end{enumerate}
Assumptions (A1), (A2), and (A5) imply
that the two mappings 
\[
\alpha \mapsto \sigma(t,x, \alpha, \mu), \quad 
\beta \mapsto \sigma_0(t,x, \beta, \mu)
\]
are  bijections from $\mathbb{R}^{n\times d}$ onto itself for any $(t,x, \mu)$.
Indeed, the inequality in (A5) implies that the mappings
\[
\alpha \mapsto \sigma(t,x,\alpha,\mu), \quad \beta \mapsto \sigma_0(t,x,\beta,\mu)
\]
are injective because distinct inputs \(\alpha_1 \neq \alpha_2\) (or \(\beta_1 \neq \beta_2\)) yield distinct outputs.  By (A1), both maps are continuously differentiable, the  lower bound in (A5) implies that the Jacobian matrices are uniformly nonsingular, so the Inverse Function Theorem applies and gives local invertibility at every point. 
In addition, (A5) implies that these mappings are coercive (i.e., $|\sigma(t,x,\alpha,\mu)| \to \infty$ as $|\alpha| \to \infty$ and similarly for $\sigma_0$), which implies that they are proper maps in $\mathbb{R}^{n \times d}$. According to the Hadamard global inversion theorem, a proper local $C^1$-diffeomorphism between Euclidean spaces is a global diffeomorphism. Thus, the mappings are bijective onto the entire space $\mathbb{R}^{n \times d}$, and their global inverses $\tilde{\sigma}$ and $\tilde{\sigma}_0$ are well-defined.

\medskip

Let $ q \equiv \sigma(t, x, \alpha, \mu)$, $z \equiv \sigma_0(t, x, \beta, \mu)$, and denote the inverse functions by $\alpha=\tilde{\sigma}(t, x, q, \mu)$, $\beta=\tilde{\sigma}_0(t, x, z, \mu)$, respectively.
Therefore the system \eqref{SDE CN} can be rewritten as
\begin{equation*}
      -dX_t=f(t, X_t, q_t, z_t, \mu_t)dt -q_tdW_t-z_tdW_t^0, \quad X_0=a,
\end{equation*}
where  
\[ f(t, x, q, z, \mu) \equiv -b(t, x, \tilde{\sigma}(t, x, q, \mu), \tilde{\sigma}_0(t, x, z, \mu),  \mu).\]
The triple $(a, q, z)$ gives the terminal value $X_T$ (by solving the SDE), while $X_T$ gives the triple $(a,q,z)$ (by solving the BSDE).
Hence we can introduce the following controlled system: 
\begin{equation}
\label{BSDE CN}
\begin{cases}
      -dX_t=f(t, X_t, q_t, z_t, \mu_t)dt-q_tdW_t-z_tdW_t^0\\
      X_T=\xi
\end{cases}
\end{equation}
where the control is now the random variable $\xi$ to be chosen from the following set 
\[
L^2(\mathcal D)=\{ \xi \  : \  \mathbb E[|\xi|^2]< \infty,\, \xi \in \mathcal{D} \text{ a.s.}\}.
\]
For each $\xi \in L^2(\mathcal D)$, consider the following cost
\[
J(\xi)= J(\xi; \mu)=\mathbb E \left[\int_0^T \ell(t, X_t, q_t, z_t, \mu_t)dt+\phi(\xi,\mu_T^X) \right],
\]
where
\[ \ell(t, x, q, z, \mu) \equiv \bar \ell(t, x, \tilde{\sigma}(t, x, q, \mu), \tilde{\sigma}_0(t, x, z, \mu),  \mu),\]
and where we use the same letter $J$ as in the original problem with a little abuse of notation. This gives rise to the following auxiliary optimization problem: 
\begin{equation}
\label{backward CN}
\inf_{\xi \in L^2(\mathcal D),\, X_0 ^\xi =a} 
J(\xi ),
\end{equation}
where $X_0^{\xi}$ is the solution of \eqref{BSDE CN} at time 0 under $\xi$.

As long as the mappings 
$\alpha \mapsto \sigma(t,x, \alpha, \mu)\equiv q$, \ 
$\beta \mapsto \sigma_0(t,x, \beta, \mu)\equiv z $
are bijections, and due to the regularity of $\sigma$ and  $\sigma_0$, the new coefficients $f, \ell$ and $\phi$ satisfy the following properties:
\begin{itemize}
\item[(B1)] $f$ and $\ell$ are  continuous in all their arguments and continuously differentiable in $(x, q, z)$, while $\phi$ is continuous in all its arguments and continuously differentiable in $x$;
\item[(B2)] the first derivatives of $f$ in $(x,q,z)$ are bounded; 
\item[(B3)] the first derivatives of \( \ell(x, q, z, \mu) \) with respect to \( x, q, z\) satisfy
\[
|\partial_{(x,q,z)} \ell(t, x, q, z, \mu)| \leq C \left(1 + |x| + |q| + |z| + M_2(\mu)\right),
\]
for all $ x \in \mathbb{R}^n$, $\mu \in \mathcal{P}_2(\mathbb R^n \times \mathbb R^{n\times d} \times \mathbb R^{n\times d})$, $t \in [0,T]$ and $q, z \in \mathbb{R}^{n \times d}$; 
the first derivative of \( \phi(x, \mu) \) with respect to \( x \) satisfies
\[
|\partial_x \phi(x, \mu)| \leq C \left(1 + |x| + M_2(\mu)\right), \forall  x \in \mathbb{R}^n, \mu \in \mathcal{P}_2(\mathbb R^n);
\]
\item[(B4)] there exists a constant $C > 0$ such that, for all $t \in [0,T]$ and $\mu \in  \mathcal{P}_2(\mathbb R^n \times \mathbb R^{n\times d} \times \mathbb R^{n\times d})$, $f$ satisfies the following growth condition at the origin:
\[
|f(t,0,0,0,\mu)| \leq C(1 + M_2(\mu)).
\]
\end{itemize}
Under  these assumptions BSDE \eqref{BSDE CN} has a unique solution $(X^\xi, q^\xi, z^\xi)  \in \mathcal S^2 _n \times  \mathcal H^2 _{n\times d}\times \mathcal H^2 _{n\times d}$  for  given $(\mathcal F_t ^0)$-adapted process $\mu_t$ with values in $\mathcal{P}_2(\mathbb{R}^n\times \mathbb{R}^{n\times d} \times \mathbb{R}^{n\times d})$ such that $\mathbb E \left[\int_0^T M_2(\mu_t)^2 dt\right]<\infty$  \cite[Theorem 2.1, p.18]{Karoui Peng Quenez}, \cite[Theorem 3.1, p.58]{Pardoux}. 

\begin{defi}
 Let $a\in \mathbb R^n$. A random variable $\xi \in L^2 (\mathcal D)$ is called $a$-feasible if the solution of \eqref{BSDE CN} satisfies $X_0^\xi=a$. We shall denote by $L^2 _a(\mathcal D)$ the set of all $a$-feasible $\xi$'s. Moreover, an $a$-feasible $\xi^*$ is called $a$-optimal or simply optimal if it attains the minimum of $J(\xi)$ over $L^2 _a(\mathcal D)$. 
\end{defi}
For a fixed flow, the original problem \eqref{forward CN} is equivalent to the auxiliary one \eqref{backward CN}.  Since the mappings $\alpha \mapsto \sigma(t,x, \alpha, \mu) \equiv q, \quad \beta \mapsto \sigma_0(t,x, \beta, \mu)\equiv z$ are bijections, the processes $q$ and $z$
can be treated as control variables. Moreover the triple $(a, q, z)$ gives the terminal value $X_T$ (by solving the SDE), while $X_T$ gives the triple $(a,q,z)$ (by solving the BSDE). 

Since $\xi$ now is the control variable, the state constraint becomes a control constraint  in \eqref{backward CN}. The backward formulation makes the problem easier to solve but in  addition we will have the original initial condition as a constraint. We can now provide the definition of the auxiliary MFG problem.

\begin{defi} \label{def-auxMFG} A solution to the \emph{auxiliary} MFG with terminal state constraint is any pair $(\hat  \xi, \hat \mu)$, where $\hat \xi \in L^2 _a (\mathcal D)$, $\hat \mu$ is an $(\mathcal F_t ^0)$-adapted process with values in $\mathcal{P}_2(\mathbb{R}^n\times \mathbb{R}^{n\times d} \times \mathbb{R}^{n\times d})$, such that $\mathbb E \left[\int_0^T M_2(\hat \mu_t)^2 dt\right]<\infty$, and the following two properties are satisfied:
\begin{enumerate}
    \item (Optimality condition) Given $\hat \mu$, the random variable $\hat \xi$ solves the following optimal control problem
\begin{equation}
\label{auxoptimal}
     \inf_{\xi\in L^2 _a (\mathcal D)}\mathbb E \left[\int_0^T \ell(t, X_t, q_t, z_t, \hat \mu_t)dt+\phi(\xi, \hat\mu^X_T)\right],
\end{equation}
where $(X,q,z) \in \mathcal S^2_n \times  \mathcal H^2_{n \times d}\times \mathcal H^2_{n\times d}$ solves
\begin{equation}
-dX_t=f(t, X_t, q_t, z_t, \hat \mu_t)dt -q_tdW_t-z_tdW_t^0, \quad X_T=\xi;
\end{equation}
\item (Consistency condition)  For all $t\in [0,T]$, we have \[\hat \mu_t=\mathcal{L}(\hat X_t, \hat\alpha_t, \hat\beta_t |\mathcal{F}_t^0), \quad t\in [0,T], \] where $\hat X$ denotes the state with terminal condition $\hat X_T  = \hat \xi $ and
\begin{equation} (\hat \alpha_t , \hat \beta_t ) = (\tilde \sigma, \tilde \sigma_0)(t, \hat X_t , \hat q_t , \hat z_t , \hat \mu_t ). \label{eq:xi-to-alpha-beta}\end{equation}
\end{enumerate}
\end{defi}

The following result establishes a bijective correspondence between the original MFG with terminal state constraint and the auxiliary formulation. This equivalence allows us to solve the constrained problem by analyzing the auxiliary MFG in the subsequent sections.
\begin{prop} 
\label{equivalence}
Under assumptions (A1)-(A5), a triple $(\hat \alpha, \hat \beta, \hat \mu)$ is a solution of the MFG with terminal state constraint  (Definition \ref{def-MFG-constraint}) if and only if the pair $(\hat \xi, \hat \mu)$ is a solution of the auxiliary MFG as in Definition \ref{def-auxMFG}.
\end{prop}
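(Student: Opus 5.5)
The plan is to fix the flow $\hat\mu$ and build an explicit bijection between admissible controls $(\alpha,\beta)\in\mathcal A$ whose state satisfies $X_T\in\mathcal D$, and $a$-feasible terminal values $\xi\in L^2_a(\mathcal D)$. This bijection should preserve the cost, i.e. $J(\alpha,\beta;\hat\mu)=J(\xi;\hat\mu)$. Both optimality conditions are then equivalent, and the consistency conditions coincide because the pair $(\hat\alpha,\hat\beta)$ is recovered from $\hat\xi$ through \eqref{eq:xi-to-alpha-beta}. The correspondence between $(\hat\alpha,\hat\beta,\hat\mu)$ and $(\hat\xi,\hat\mu)$ is $\hat\xi=\hat X_T$ in one direction and \eqref{eq:xi-to-alpha-beta} in the other.

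\emph{Forward to backward.} Take $(\alpha,\beta)\in\mathcal A$ with state $X\in\mathcal S^2_n$ solving \eqref{SDE CN} and $X_T\in\mathcal D$.
\begin{itemize}
\item Set $q_t=\sigma(t,X_t,\alpha_t,\hat\mu_t)$, $z_t=\sigma_0(t,X_t,\beta_t,\hat\mu_t)$ and $\xi=X_T$.
\item By (A2) and (A4), $|q_t|\le C(1+|X_t|+|\alpha_t|+M_2(\hat\mu_t))$, so $q\in\mathcal H^2_{n\times d}$ using $\mathbb E\int_0^T M_2(\hat\mu_t)^2dt<\infty$; the same holds for $z$.
\item Progressive measurability follows from the continuity of $\sigma$ and $\sigma_0$.
\item Since $\alpha_t=\tilde\sigma(t,X_t,q_t,\hat\mu_t)$ and $\beta_t=\tilde\sigma_0(t,X_t,z_t,\hat\mu_t)$, rewriting \eqref{SDE CN} shows that $(X,q,z)$ solves \eqref{BSDE CN} with terminal value $\xi\in L^2(\mathcal D)$.
\item By uniqueness for \eqref{BSDE CN}, $(X,q,z)=(X^\xi,q^\xi,z^\xi)$, so $X^\xi_0=a$ and $\xi$ is $a$-feasible.
\item The two costs coincide by the definition of $\ell$.
\end{itemize}

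\emph{Backward to forward.} Take $\xi\in L^2_a(\mathcal D)$ with $(X^\xi,q^\xi,z^\xi)$ solving \eqref{BSDE CN}, and define $\alpha,\beta$ by \eqref{eq:xi-to-alpha-beta}.
\begin{itemize}
\item The inverses $\tilde\sigma,\tilde\sigma_0$ are measurable, indeed continuous, by the Hadamard argument in the paper, so $\alpha,\beta$ are progressively measurable.
\item Square integrability needs linear growth of $\tilde\sigma$ in $q$. From (A5) with $\alpha_2=0$, and writing $\alpha=\tilde\sigma(t,x,q,\mu)$,
\[ k_1|\alpha|\le |q-\sigma(t,x,0,\mu)|\le |q|+C(1+|x|+M_2(\mu)), \]
where the last step uses (A2) and (A4). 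Hence $\alpha\in\mathcal H^2_{n\times d}$, and likewise $\beta$.
\item Reading \eqref{BSDE CN} forward with $X_0=a$ shows that $X^\xi$ solves \eqref{SDE CN} under $(\alpha,\beta)$ with $X_T=\xi\in\mathcal D$. By forward uniqueness, $X^\xi$ is the state controlled by $(\alpha,\beta)$.
\item Again the costs coincide.
\end{itemize}

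The two maps are mutually inverse, since $\sigma(\cdot,\tilde\sigma(\cdot))=\mathrm{id}$ and conversely. Hence the infimum in \eqref{forward CN} equals the infimum in \eqref{auxoptimal}, and a minimizer of one corresponds to a minimizer of the other. The consistency conditions are literally identical, since both involve $\mathcal L(\hat X_t,\hat\alpha_t,\hat\beta_t\mid\mathcal F^0_t)$ with the same processes. The integrability requirement on $\hat\mu$ is the same in both definitions.

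I expect the main obstacle to be the integrability and measurability bookkeeping, namely the linear growth estimate for $\tilde\sigma,\tilde\sigma_0$ and checking that the forward and backward solutions live in the same spaces so that both uniqueness theorems apply. It also needs to be checked that $(B1)$–$(B4)$ are available, so that \eqref{BSDE CN} is well posed for every $\xi\in L^2(\mathcal D)$. The rest is a direct rewriting.
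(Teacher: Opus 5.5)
Your proposal is correct and follows essentially the same route as the paper. Both arguments use the change of variables $q=\sigma(t,X_t,\alpha_t,\hat\mu_t)$, $z=\sigma_0(t,X_t,\beta_t,\hat\mu_t)$ and the linear growth of $\tilde\sigma,\tilde\sigma_0$ to obtain a cost-preserving correspondence between admissible controls with $X_T\in\mathcal D$ and $a$-feasible $\xi$. The paper runs this correspondence once in each direction (by contradiction for original $\Rightarrow$ auxiliary, by direct comparison for the converse), and derives the growth of $\tilde\sigma$ from its Lipschitz estimate plus a bound on $\tilde\sigma(t,0,0,\mu)$, whereas you get it directly from (A5) with $\alpha_2=0$, (A2) and (A4). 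Your explicit appeal to BSDE uniqueness, to identify $(X,q,z)$ with $(X^\xi,q^\xi,z^\xi)$ and hence get $X^\xi_0=a$, is a step the paper leaves implicit.
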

\begin{proof}
The proof consists of two directions:\\
\textbf{Direction 1: Original $\implies$ auxiliary.} 
Let $(\hat \alpha, \hat \beta, \hat \mu)$ be a solution of the original MFG. We verify the conditions of Definition \ref{def-auxMFG}:\\
 \textit{Feasibility ($a$-feasibility):} 
    By the optimality condition of the forward problem, the optimal state process $\hat X$ satisfies the initial condition $\hat X_0 = a$ and the terminal constraint $\hat X_T \in \mathcal{D}$ almost surely. By setting $\hat \xi = \hat X_T$, we clearly have $\hat \xi \in L^2_a(\mathcal{D})$. 
    Let $\hat q, \hat z$ be defined by
    \[ \hat q_t = \sigma(t, \hat X_t, \hat \alpha_t, \hat \mu_t), \quad \hat z_t = \sigma_0(t, \hat X_t, \hat \beta_t, \hat \mu_t). \]
    Due to the equivalence between SDE \eqref{SDE CN} and BSDE \eqref{BSDE CN} under assumption (A5), the triplet $(\hat X, \hat q, \hat z)$ solves the BSDE \eqref{BSDE CN} with terminal condition $\hat \xi$. Since $\hat X_0 = a$, the variable $\hat \xi$ belongs to the set of $a$-feasible random variables $L^2_a(\mathcal{D})$.\\
 \textit{Optimality condition:} 
    Suppose, toward a contradiction, that $\hat \xi$ is not optimal for the auxiliary problem \eqref{backward CN}. Then there exists a random variable $\xi' \in L^2_a(\mathcal{D})$ such that
    \begin{equation} \label{absurd_en}
    J(\xi'; \hat \mu) < J(\hat \xi; \hat \mu).
    \end{equation}
    Let $(X', q', z')$ be the solution to the BSDE \eqref{BSDE CN} associated with $\xi'$. Since $\xi' \in L^2_a(\mathcal{D})$, we have $X'_0 = a$. We define the corresponding forward controls using the inverse mappings:
    \begin{equation} \label{eq:inv-control} \alpha'_t = \tilde \sigma(t, X'_t, q'_t, \hat \mu_t), \quad \beta'_t = \tilde \sigma_0(t, X'_t, z'_t, \hat \mu_t). \end{equation}
    We must ensure $(\alpha', \beta') \in \mathcal{A}$. Under assumptions (A1), (A2), and (A5), the inverse mappings $\tilde \sigma$ and $\tilde \sigma_0$ are Lipschitz continuous in $(x, q)$ and $(x, z)$, respectively.
    
We prove the claimed Lipschitz continuity of $\tilde \sigma$ only, as the same property for $\tilde \sigma_0$ can be shown in the same way. We recall first that, under (A1), (A2), (A5), the mapping $\alpha \mapsto \sigma(t, x, \alpha, \mu)$ is bijective. We define its inverse as $\tilde{\sigma}(t, x, q, \mu) = \alpha$. To prove this inverse is Lipschitz in $(x, q)$ uniformly in $t$, we show that Lipschitz continuity holds separately in $x$ and $q$ uniformly in the other variable as well as in $t$. Joint Lipschitz continuity in $(x,q)$ follows easily. 
We start with checking the Lipschitz continuity in $x$. 
Let $x_1, x_2 \in \mathbb{R}^n$ and fix $q$. Let $\alpha_1 = \tilde{\sigma}(t, x_1, q, \mu)$ and $\alpha_2 = \tilde{\sigma}(t , x_2, q, \mu)$. Then $\sigma(t, x_1, \alpha_1, \mu) = \sigma(t, x_2, \alpha_2, \mu) = q$. Thus
    \begin{equation}\label{eq-sigma} \sigma(t, x_1, \alpha_1, \mu) - \sigma(t, x_1, \alpha_2, \mu) = \sigma(t, x_2, \alpha_2, \mu) - \sigma(t, x_1, \alpha_2, \mu). \end{equation}
Since the first derivatives of $\sigma$ are bounded  (Assumption (A2)), $\sigma$ is Lipschitz in $x$ with some constant $L>0$ uniform in the other variables. Therefore, \eqref{eq-sigma} leads to
     \[ k_1 |\alpha_1 - \alpha_2| \leq |\sigma(t, x_1, \alpha_1, \mu) - \sigma(t, x_1, \alpha_2, \mu)| = |\sigma(t, x_2, \alpha_2, \mu) - \sigma(t, x_1, \alpha_2, \mu)| \leq  L |x_1 - x_2|  . \]
 Applying this and (A5) yield
    \[ k_1 |\alpha_1 - \alpha_2| \leq L |x_1 - x_2| \implies |\tilde{\sigma}(t, x_1, q, \mu) - \tilde{\sigma}(t, x_2, q, \mu)| \leq \frac{L}{k_1} |x_1 - x_2|. \]
Now, we turn to the Lipschitz continuity in $q$. Let $q_1, q_2 \in \mathbb{R}^{n \times d}$. Let $\alpha_1 = \tilde{\sigma}(t, x, q_1, \mu)$ and $\alpha_2 = \tilde{\sigma}(t, x, q_2, \mu)$. By (A5), we have
    \[ |q_1 - q_2| = |\sigma(t, x, \alpha_1, \mu) - \sigma(t, x, \alpha_2, \mu)| \geq k_1 |\alpha_1 - \alpha_2|, \]
    which entails
    \[|\tilde{\sigma}(t, x, q_1, \mu) - \tilde{\sigma}(t, x, q_2, \mu)| \leq \frac{1}{k_1} |q_1 - q_2|. \]   
    Notice that $k_1$ is a fixed positive constant from assumption (A5). Therefore, the Lipschitz constants $L/k_1$ and $1/k_1$ above are uniform in, respectively, $(t,q,\mu)$ and $(t,x,\mu)$. Consequently, 
\[ |\tilde{\sigma}(t, x, q, \mu) - \tilde{\sigma}(t, x', q', \mu)| \leq \frac{L}{k_1} |x - x'| + \frac{1}{k_1} |q - q'|, \]
and similarly for $\tilde \sigma_0$.
We must ensure $(\alpha', \beta') \in \mathcal{A}$. We have just shown that, under assumptions (A2) and (A5), the inverse mappings $\tilde{\sigma}$ and $\tilde{\sigma}_0$ are Lipschitz continuous in $(x, q, z)$ uniformly in $t \in [0,T]$. To establish the linear growth of the inverse controls $(\alpha',\beta')$ in \eqref{eq:inv-control}, we use first the Lipschitz continuity of $\tilde{\sigma }$ in \((x, q)\) shown above. For any \((x', q', \mu)\), we have:
\begin{equation}
\label{inversezero}
    |\tilde{\sigma}(t, x', q', \mu)| \leq |\tilde{\sigma}(t, 0, 0, \mu)| + \frac{L}{k_1} |x'| + \frac{1}{k_1} |q'|.
\end{equation}
Let \(\alpha_0 = \tilde{\sigma}(t, 0, 0, \mu)\). By the definition of inverse function, it holds that \(\sigma(t, 0, \alpha_0, \mu) = 0\). We can now bound \(\alpha _{0}\) by comparing it to the origin \(\alpha = 0\). Using Assumption (A5), we obtain:
$$k_1 |\alpha_0 - 0| \leq |\sigma(t, 0, \alpha_0, \mu) - \sigma(t, 0, 0, \mu)|.$$
Substituting \(\sigma(t, 0, \alpha_0, \mu) = 0\), the inequality simplifies to
$|\alpha_0| \leq \frac{1}{k_1} |\sigma(t, 0, 0, \mu)|$.
Applying the growth condition at the origin from Assumption (A4), i.e., \(|\sigma(t, 0, 0, \mu)| \leq C(1 + M_2(\mu))\), yields 
$$|\tilde{\sigma}(t, 0, 0, \mu)| \leq \frac{C}{k_1}(1 + M_2(\mu)).
$$
Finally, substituting this back into \eqref{inversezero}, we conclude that the control \(\alpha'\) satisfies
$$|\alpha'_t| \leq \bar{C}(1 + M_2(\hat\mu_t) + |X_t '| + |q_t '|),$$
where \(\bar{C}\) is a constant independent of \(t \in [0, T]\). Identical arguments lead to the same bound for \(\beta_t ' = \tilde{\sigma}_0(t, X'_t, z_t ', \hat \mu_t)\).
Since the BSDE \eqref{BSDE CN} solution $(X', q', z')$ belongs to $\mathcal S^2_n \times  \mathcal H^2_{n \times d}\times \mathcal H^2_{n\times d}$ and  $\hat \mu$ is an $(\mathcal F_t ^0)$-adapted process with values in $\mathcal{P}_2(\mathbb{R}^n\times \mathbb{R}^{n\times d} \times \mathbb{R}^{n\times d})$, such that $\mathbb E \left[\int_0^T M_2(\hat \mu_t)^2 dt\right]<\infty$, the linear growth just established ensures that:
\[ \mathbb{E} \left[ \int_0^T (|\alpha'_t|^2+ |\beta'_t|^2)dt \right] \leq \bar{C}\mathbb{E} \left[\int_0^T (1 + M_2(\mu_t)^2+ |X'_t|^2 +|q'_t|^2+ |z'_t|^2) dt \right] < \infty. \]
Furthermore, the continuity of $\tilde{\sigma}$ and $\tilde{\sigma}_0$ ensures that the progressive measurability of $(X', q', z')$ is preserved. Thus, we deduce $(\alpha', \beta') \in \mathcal{A}$.

By construction, the state associated to such pair satisfies $X'_0 = a$ and $X'_T = \xi' \in \mathcal{D}$. By the definition of the costs $\ell$ and $\phi$, we have
\[ J(\alpha', \beta'; \hat \mu) = J(\xi'; \hat \mu) \quad \text{and} \quad J(\hat \alpha, \hat \beta; \hat \mu) = J(\hat \xi; \hat \mu). \]
Substituting these into \eqref{absurd_en}, we obtain $J(\alpha', \beta'; \hat \mu) < J(\hat \alpha, \hat \beta; \hat \mu)$, which contradicts the optimality of $(\hat \alpha, \hat \beta)$ in the original MFG problem \eqref{forward CN}. Thus, $\hat \xi$ must be optimal for the auxiliary problem.\\
\textit{Consistency condition:} 
    The consistency condition for the forward problem is $$\hat \mu_t = \mathcal{L}(\hat X_t, \hat \alpha_t, \hat \beta_t | \mathcal{F}_t^0).$$ 
    Since the processes $(\hat X, \hat \alpha, \hat \beta)$ and the flow $\hat \mu$ are identical in both formulations via the bijective mappings, the consistency condition in Definition \ref{def-auxMFG} is satisfied.
Therefore, $(\hat \xi, \hat \mu)$ is a solution for the auxiliary MFG.\\
\textbf{Direction 2: Auxiliary $\implies$ original.} 
Let $(\hat \xi, \hat \mu)$ be a solution of the auxiliary MFG with corresponding state $(\hat X, \hat q, \hat z)$ as in Definition \ref{def-auxMFG}. Going back to the original problem, we have $J(\hat \alpha, \hat \beta)=J(\hat \xi)$ for
\[ (\hat \alpha_t , \hat \beta_t ) = (\tilde \sigma, \tilde \sigma_0)(t, \hat X_t , \hat q_t , \hat z_t, \hat \mu_t), \quad t\in [0,T]. \]
Under assumptions (A1), (A2), (A4) and (A5), the inverse mappings $(\tilde \sigma, \tilde \sigma_0)$ are Lipschitz continuous in $(x, q, z)$ uniformly in $t$, which implies a linear growth condition (see the proof of Direction 1). Since the BSDE solution $(\hat X, \hat q, \hat z)$ belongs to $\mathcal S^2_n \times  \mathcal H^2_{n \times d}\times \mathcal H^2_{n\times d}$, it follows that $(\hat \alpha, \hat \beta) \in \mathcal{A}$.   With this admissibility established, we have $J(\hat \alpha, \hat \beta) \leq J(\alpha,\beta)$ for any admissible control pair $(\alpha, \beta) \in \mathcal{A}$ satisfying $X_T^{\alpha, \beta} \in \mathcal{D}$ a.s. Now, let
\[ \xi = X_T^{\alpha, \beta}, \quad q_t = \sigma(t, X_t, \alpha_t, \hat \mu_t), \quad z_t = \sigma_0(t, X_t, \beta_t, \hat \mu_t). \]
Under the  growth assumptions (A2) and (A4), and since $(X, \alpha, \beta)$ belongs to $\mathcal S^2_n \times  \mathcal H^2_{n \times d}\times \mathcal H^2_{n\times d}$, it follows that $(q, z) \in \mathcal H^2_{n \times d}\times \mathcal H^2_{n\times d}$. 
Consequently, we have $\xi \in L^2_a (\mathcal D)$. Therefore, the optimality condition in Definition \ref{def-MFG-constraint} is satisfied. The consistency condition therein is also trivially fulfilled and we can conclude that $(\hat \alpha, \hat \beta, \hat \mu)$ is a solution for the MFG with terminal constraint.
\end{proof}

\section{Existence of a MFG solution}
\subsection{FBSDE characterization of MFG solutions}

Let $\mu$ be given $(\mathcal F_t^0)$-adapted process with values in $\mathcal P_2(\mathbb R^n \times \mathbb R^{n\times d}\times \mathbb R^{n\times d})$ and $\mu^X$ be the flow of the corresponding first marginal laws. Let $a\in \mathbb R^n$ be a fixed initial condition for the state $X$. The paper by Ji and Zhou \cite{Ji Zhou} provides a characterization of $a$-optimal random variables in $L^2 (\mathcal D)$ via a stochastic maximum principle. We adapt their method to our setting. This will lead to necessary and sufficient conditions for an $a$-feasible $\hat \xi$ to be optimal, which are stated below. Their proofs are postponed to \ref{appendix}, where they are performed in full detail for the sake of completeness. Once $\hat \xi$ is given, $\hat q, \hat z$ are obtained by solving \eqref{BSDE CN} and subsequently $\hat \alpha, \hat \beta$ are found. 

Now, let $\hat \xi$ be an $a$-optimal random variable in $L^2 (\mathcal D)$, with associated state process $(\hat X,\hat q,\hat z)$. 
We define the following adjoint equation:
\begin{equation}
\label{adjoint equation CN}
    \begin{cases}
        dY_t=\left(\hat f_x(t)Y_t+h_0 \hat \ell_x(t)\right)dt+\left(\hat f_q(t) Y_t+h_0 \hat \ell_q(t) \right)dW_t+\left(\hat f_z(t)Y_t+h_0 \hat\ell_z(t)\right)dW_t^0\\
        Y_0=h_1
    \end{cases}
\end{equation}
where we recall that $\hat \ell_{p}(t)=\ell_{p}(t,\hat X_t, \hat q_t, \hat z_t, \mu_t)$ for $p\in \{x,q,z\}$.
By introducing the Hamiltonian as 
$$H(t, x, q, z, \mu,  y, h_0)=  y f(t, x, q, z, \mu)+h_0 \ell(t, x, q, z, \mu),$$
we can rewrite the adjoint equation \eqref{adjoint equation CN} as follows: 
\begin{equation*}
    \begin{cases}
      dY_t=H_xdt+H_qdW_t+H_zdW_t^0\\
        Y_0=h_1  
    \end{cases}
\end{equation*}
where we have omitted the variables $(t, \hat X_t,  \hat q_t, \hat z_t,  \mu_t, Y_t,h_0)$ in $H_x,H_q$ and $H_z$ to ease the notation. 

\begin{theo}
\label{Necessary Sufficient Condition CN}
Assume \emph{(A1)--(A5)}.
\begin{enumerate}
\item[(i)] (Necessary conditions) For a given \( (\mathcal{F}_t^0) \)-progressively measurable process \( \mu \) with values in \( \mathcal{P}_2(\mathbb{R}^n \times \mathbb{R}^{n\times d} \times \mathbb{R}^{n \times d}) \), if $\hat \xi$ is $a$-optimal for \eqref{backward CN} with $(\hat X,\hat q,\hat z)$ being the corresponding state as in \eqref{BSDE CN}, then there exist $h_1 \in \mathbb{R}^n$ and $h_0 \in \mathbb{R}$ with $h_0 \geq 0$ and $|h_0|+|h_1|\neq 0$  such that 
\begin{equation}
    \label{main condition CN}
        \big(Y_T+h_0\phi_x(\hat \xi,\mu_T^X)\big)\big(\xi-\hat \xi\big)  \geq0 \ a.s., \text{ for all } \xi \in L^2(\mathcal{D}),
    \end{equation}
    where $Y$ is the solution of the adjoint equation \eqref{adjoint equation CN} with initial condition $h_1$.
\item[(ii)] (Sufficient conditions) Suppose that the Hamiltonian is convex in  $(x, q,z) \in \mathbb R^n \times \mathbb R^{n \times d} \times \mathbb R^{n \times d}$ and that $\phi$ is convex in $x \in \mathbb R^n$. 
Let $\hat \xi \in L^2_a (\mathcal D)$. Assume that \eqref{BSDE CN} (for $\xi = \hat \xi$) is well-posed, that there exist $h_1 \in \mathbb{R}^n$ and $h_0 \in \mathbb{R}^+$ such that \eqref{adjoint equation CN} is well-posed, and
    \begin{equation}
    \label{optimality condition CN}
        \big( Y_T+h_0\phi_x(\hat \xi, \mu_T^X)\big) \big(\xi-\hat \xi \big)\geq0 \ a.s., \quad \textrm{for all }\xi \in L^2_a(\mathcal{D}),
    \end{equation}
then $\hat\xi$ is $a$-optimal.
\end{enumerate}
\end{theo}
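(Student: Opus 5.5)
For (i) we follow Ji and Zhou: Ekeland's variational principle on the space of terminal conditions, with the constraint $X_0^\xi=a$ penalized. Since $\mu$ is fixed, the problem is a standard constrained control problem for the BSDE \eqref{BSDE CN}.

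We equip $L^2(\mathcal D)$ with the metric $d(\xi,\xi')=(\mathbb E|\xi-\xi'|^2)^{1/2}$. Because $\mathcal D$ is closed and convex, $L^2(\mathcal D)$ is a closed convex subset of $L^2$, hence complete. For $\varepsilon>0$ we define
\[F_\varepsilon(\xi)=\Big(|X_0^\xi-a|^2+\big((J(\xi)-J(\hat\xi)+\varepsilon)^+\big)^2\Big)^{1/2}.\]
Standard BSDE stability estimates give continuity of $\xi\mapsto (X^\xi,q^\xi,z^\xi)$ in $\mathcal S^2\times\mathcal H^2\times\mathcal H^2$. Together with the growth bounds (B3) on $\ell_{x,q,z}$ and $\phi_x$, this makes $F_\varepsilon$ continuous, and $J$ is continuous as well. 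We have $F_\varepsilon>0$ on $L^2(\mathcal D)$, since $F_\varepsilon(\xi)=0$ would give an $a$-feasible $\xi$ with $J(\xi)<J(\hat\xi)$. Moreover $F_\varepsilon(\hat\xi)=\varepsilon$. Ekeland's principle then yields $\xi^\varepsilon\in L^2(\mathcal D)$ with $d(\xi^\varepsilon,\hat\xi)\le\sqrt\varepsilon$ and
\[F_\varepsilon(\xi)+\sqrt\varepsilon\, d(\xi,\xi^\varepsilon)\ge F_\varepsilon(\xi^\varepsilon)\quad\text{for all }\xi\in L^2(\mathcal D).\]

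Next we take the convex perturbation $\xi^\varepsilon_\rho=\xi^\varepsilon+\rho(\xi-\xi^\varepsilon)$, which stays in $L^2(\mathcal D)$ by convexity of $\mathcal D$. The variational equation is the linear BSDE
\[-d\hat X^1_t=(f_x\hat X^1_t+f_q\hat q^1_t+f_z\hat z^1_t)\,dt-\hat q^1_t\,dW_t-\hat z^1_t\,dW^0_t,\qquad \hat X^1_T=\xi-\xi^\varepsilon,\]
with coefficients evaluated along $(X^{\xi^\varepsilon},q^{\xi^\varepsilon},z^{\xi^\varepsilon})$. We prove the first-order expansions
\[X_0^{\xi^\varepsilon_\rho}=X_0^{\xi^\varepsilon}+\rho\hat X^1_0+o(\rho),\qquad J(\xi^\varepsilon_\rho)=J(\xi^\varepsilon)+\rho\,\mathbb E\Big[\int_0^T(\ell_x\hat X^1+\ell_q\hat q^1+\ell_z\hat z^1)\,dt+\phi_x\,(\xi-\xi^\varepsilon)\Big]+o(\rho).\]
We expect this to be the main technical step. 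The approach is to write the differences via the mean value theorem and use bounded $f$-derivatives plus dominated convergence, together with continuity of the derivatives (B1) and the linear growth (B3) to control the $\ell$ terms in $L^2$.

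We then divide the Ekeland inequality by $\rho$ and let $\rho\downarrow0$, with $h_1^\varepsilon=(X_0^{\xi^\varepsilon}-a)/F_\varepsilon(\xi^\varepsilon)$ and $h_0^\varepsilon=(J(\xi^\varepsilon)-J(\hat\xi)+\varepsilon)^+/F_\varepsilon(\xi^\varepsilon)\ge0$. This gives
\[h_1^\varepsilon\hat X^1_0+h_0^\varepsilon\,\mathbb E[\cdots]\ge-\sqrt\varepsilon\,(\mathbb E|\xi-\xi^\varepsilon|^2)^{1/2},\qquad |h_0^\varepsilon|^2+|h_1^\varepsilon|^2=1.\]
Applying Itô's formula to $Y^\varepsilon\hat X^1$, where $Y^\varepsilon$ solves the forward adjoint SDE \eqref{adjoint equation CN} with data $(h_0^\varepsilon,h_1^\varepsilon)$, the drift and diffusion terms cancel against the $\ell$-terms. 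This turns the inequality into
\[\mathbb E\big[(Y^\varepsilon_T+h_0^\varepsilon\phi_x(\xi^\varepsilon))(\xi-\xi^\varepsilon)\big]\ge-\sqrt\varepsilon\,\|\xi-\xi^\varepsilon\|_{L^2}.\]
We extract a subsequence with $(h_0^\varepsilon,h_1^\varepsilon)\to(h_0,h_1)$ on the unit sphere, so $|h_0|+|h_1|\neq0$ and $h_0\ge0$. Stability of the linear SDE, together with $\xi^\varepsilon\to\hat\xi$ and $\mu$ fixed, gives convergence in $L^2$. In the limit
\[\mathbb E\big[(Y_T+h_0\phi_x(\hat\xi))(\xi-\hat\xi)\big]\ge0\quad\text{for all }\xi\in L^2(\mathcal D).\]
To pass to the a.s. statement \eqref{main condition CN}, we fix $\eta\in\mathcal D$ and choose $\xi=\eta\mathbf 1_A+\hat\xi\mathbf 1_{A^c}$ for arbitrary $A\in\mathcal F_T$. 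This gives the inequality a.s. for each $\eta$ in a countable dense subset of $\mathcal D$, and continuity extends it to all $\xi$.

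For (ii), we take $\xi\in L^2_a(\mathcal D)$ with state $(X,q,z)$ and apply Itô's formula to $Y_t(X_t-\hat X_t)$ on $[0,T]$. The initial term vanishes because $X_0=\hat X_0=a$. The $Y f$ parts combine with the adjoint dynamics to give
\[\mathbb E[Y_T(\xi-\hat\xi)]=\mathbb E\int_0^T\Big[\big(H_x,H_q,H_z\big)\cdot\big(X-\hat X,q-\hat q,z-\hat z\big)-\big(H(X,q,z)-H(\hat X,\hat q,\hat z)\big)\Big]\,dt+h_0\,\mathbb E\int_0^T(\ell-\hat\ell)\,dt.\]
Convexity of $H$ shows the first integral is $\le0$, hence
\[h_0\,\mathbb E\int_0^T(\ell-\hat\ell)\,dt\ge\mathbb E[Y_T(\xi-\hat\xi)].\]
Convexity of $\phi$ gives $h_0(\phi(\xi)-\phi(\hat\xi))\ge h_0\phi_x(\hat\xi)(\xi-\hat\xi)$. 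Adding these and using \eqref{optimality condition CN}, we get $h_0(J(\xi)-J(\hat\xi))\ge0$, and $h_0>0$ concludes. Integrability of the cross terms, needed for the local martingales to be true martingales, follows from the $\mathcal S^2\times\mathcal H^2$ bounds and BDG.
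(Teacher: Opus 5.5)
Your proposal is correct and follows the paper's proof essentially step by step. For (i), you use Ekeland's principle on the same functional $F_\varepsilon$, convex perturbations with the variational BSDE, It\^o duality with the adjoint, and localization on events to obtain the a.s.\ inequality; for (ii), you use the convexity-of-$H$ and convexity-of-$\phi$ duality argument. The differences are minor: you apply the duality at the $\varepsilon$-level and then let $\varepsilon\to0$, which needs stability of the adjoint SDE where the paper uses stability of the variational BSDE, and you justify the martingale property via BDG instead of the paper's stopping-time localization.
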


By means of the necessary and sufficient conditions stated above we can obtain the following characterization of the auxiliary MFG solutions. Its proof is omitted since it is a straightforward consequence of those conditions. 

\begin{theo} 
\label{verification}
Suppose that $H(t,\cdot, \cdot, \cdot, \cdot, \mu, y, h_0)$ and $\phi(\cdot, \mu)$ are convex in $(x,q, z)$ and $x$ respectively. 
Then any pair $(\hat \xi, \hat \mu )$ is a solution for the auxiliary MFG with terminal state constraint as in Definition \ref{def-auxMFG} if the tuple $( \hat X,   \hat q,  \hat z,  \hat \mu,   \hat Y,  \hat {h}_0, \hat {h}_1, \hat {\xi})$, where $( \hat X,   \hat q,  \hat z,  \hat Y) \in \mathcal S^2 _n \times \mathcal H^2 _{n\times d} \times \mathcal H^2 _{n\times d} \times \mathcal S^2 _n$, $\hat \mu$ is an $(\mathcal F_t ^0)$-progressively measurable process such that $\mathbb E \left[\int_0^T M_2(\hat \mu_t)^2 dt\right]<\infty$, $( \hat h_0,  \hat h_1) \in \mathbb R \times \mathbb R^n$, and $\hat \xi \in L^2(\mathcal D)$, solves the FBSDE system
\begin{equation}
\label{FBSDE MFG CN}
\begin{cases}
    -dX_t=f(t,X_t, q_t, z_t, \mu_t)dt -q_tdW_t-z_tdW_t^0\\
      X_0 =a, \quad X_T=\xi\\
    dY_t=[f_x(t)Y_t+h_0 \ell_x(t)]dt+[f _q(t)Y_t+h_0 \ell_q(t)]dW_t+[f_z(t)Y_t+h_0 \ell_z(t)]dW_t^0\\
        Y_0=h_1\\
        h_0 > 0\\
    \big( Y_T+h_0\phi_x(\xi,\mu_T^X)\big) \big(\xi^\prime -\xi \big) \geq 0 \ a.s., \ \forall \xi^\prime \in L^2(\mathcal{D})
\end{cases}
\end{equation}
with
\[\hat{\mu}_t=\mathcal{L}(\hat X_t, \hat \alpha_t , \hat {\beta}_t \mid \mathcal F_t ^0),   \quad  t\in [0,T],\]
where 
$ (\hat \alpha_t , \hat \beta_t ) = (\tilde \sigma, \tilde \sigma_0)(t, \hat X_t , \hat q_t , \hat z_t , \hat \mu_t ).$
The reverse also holds true with the condition $h_0 >0$ in the FBSDE system replaced with the weaker $h_0 \geq 0$.
\end{theo}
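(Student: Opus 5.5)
The plan is to reduce Theorem \ref{verification} to Theorem \ref{Necessary Sufficient Condition CN} applied with the flow frozen at $\mu=\hat\mu$. The consistency condition is the same in both the statement and Definition \ref{def-auxMFG}, so only the optimality condition needs checking, and only for the fixed flow $\hat\mu$.

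\emph{Sufficiency.} Suppose the tuple $(\hat X,\hat q,\hat z,\hat\mu,\hat Y,\hat h_0,\hat h_1,\hat\xi)$ solves \eqref{FBSDE MFG CN} with $\hat h_0>0$ and satisfies the consistency relation.
\begin{enumerate}
\item The forward equation with $X_T=\hat\xi$ and $X_0=a$, together with $\hat\xi\in L^2(\mathcal D)$, gives $\hat\xi\in L^2_a(\mathcal D)$.
\item The BSDE \eqref{BSDE CN} with terminal value $\hat\xi$ is well-posed by (B1)--(B4) for the given $\hat\mu$.
\item The adjoint equation \eqref{adjoint equation CN} is a linear SDE. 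Its coefficients $f_x,f_q,f_z$ are bounded by (B2), and its forcing terms $\ell_p(t)$ are square integrable by (B3), because $(\hat X,\hat q,\hat z)$ lie in the stated spaces and $\mathbb E\int_0^T M_2(\hat\mu_t)^2dt<\infty$. Hence it is well-posed, and its solution $\hat Y\in\mathcal S^2_n$.
\item The variational inequality in \eqref{FBSDE MFG CN} holds for all $\xi'\in L^2(\mathcal D)$, so in particular for all $\xi'\in L^2_a(\mathcal D)\subset L^2(\mathcal D)$. This is \eqref{optimality condition CN}.
\item Together with the assumed convexity of $H$ and $\phi$ and $\hat h_0>0$, Theorem \ref{Necessary Sufficient Condition CN}(ii) shows that $\hat\xi$ is $a$-optimal for \eqref{auxoptimal}.
\end{enumerate}
With consistency, this is exactly Definition \ref{def-auxMFG}.

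\emph{Converse.} Let $(\hat\xi,\hat\mu)$ be an auxiliary MFG solution.
\begin{enumerate}
\item Set $(\hat X,\hat q,\hat z)$ to be the solution of \eqref{BSDE CN} with terminal value $\hat\xi$. The condition $\hat X_0=a$ holds by feasibility.
\item By Theorem \ref{Necessary Sufficient Condition CN}(i) with $\mu=\hat\mu$, there exist $h_0\ge0$ and $h_1$, not both zero, such that the adjoint solution $\hat Y$ satisfies \eqref{main condition CN}. That inequality holds for every $\xi\in L^2(\mathcal D)$, which is precisely the last line of \eqref{FBSDE MFG CN}.
\item The consistency relation is inherited directly, with $(\hat\alpha,\hat\beta)$ defined through $(\tilde\sigma,\tilde\sigma_0)$ as in \eqref{eq:xi-to-alpha-beta}.
\end{enumerate}
Thus the tuple solves \eqref{FBSDE MFG CN} with $h_0>0$ weakened to $h_0\ge 0$.

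The only step needing care is the regularity bookkeeping. We must check that $\hat Y\in\mathcal S^2_n$, which follows from standard $L^2$ estimates for linear SDEs with bounded coefficients and square-integrable forcing. We must also check that the product $(Y_T+h_0\phi_x)(\xi'-\hat\xi)$ is integrable. For this, $\phi_x$ has linear growth by (B3), so it is in $L^2$ because $\hat\xi\in L^2$ and $M_2(\hat\mu^X_T)$ is controlled through consistency. Beyond this, the argument is a direct rereading of the two parts of Theorem \ref{Necessary Sufficient Condition CN}.
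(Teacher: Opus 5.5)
Your proposal is correct and takes the same route as the paper, which omits the proof as a straightforward consequence of Theorem~\ref{Necessary Sufficient Condition CN}(i)--(ii) applied with the flow frozen at $\hat\mu$. Your bookkeeping fills in exactly the details the paper leaves implicit: well-posedness of the adjoint equation, the inclusion $L^2_a(\mathcal D)\subset L^2(\mathcal D)$, and square integrability of $M_2(\hat\mu^X_T)$ via consistency.
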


\subsection{On the uniqueness of the MFG solution}
In this section we prove some uniqueness results for the auxiliary MFG under some further assumptions. Using the bijective correspondence between solutions of the auxiliary problem and those of the original problem, we deduce uniqueness of the original MFG solution. The working assumptions are the following:
\begin{enumerate}
    
\item[(C1)] The coefficients $b, \sigma, \sigma_0$ do not depend on the measure argument.

\item[(C2)] The coefficients $b, \sigma, \sigma_0$ are affine in the state and control variables, i.e.,
    \[ b(t,x,\alpha,\beta)=b_0(t)+B_x(t)x+B_\alpha(t)\alpha+B_\beta(t)\beta, \] 
    \[ \sigma(t,x,\alpha)=\sigma_1(t)+S_x(t)x+S_\alpha(t)\alpha, \] 
    and 
    \[ \sigma_0(t,x,\beta)=\sigma_2(t)+S_x^0(t)x+S_\beta^0(t)\beta, \] 
    for all $(t,x,\alpha,\beta) \in [0, T]\times \mathbb{R}^n \times \mathbb{R}^{n\times d} \times \mathbb{R}^{n\times d}$, where the coefficients
    \[ (b_0, B_x, B_\alpha, B_\beta, \sigma_1, S_x, S_\alpha, \sigma_2, S_x^0, S_\beta^0) \]
    are bounded measurable functions on $[0,T]$. Moreover, for every \(t \in [0,T]\), the matrices \(S_\alpha(t)\), \(S_\beta^0(t)\) are invertible and  the matrix-valued functions
$t \mapsto S_\alpha(t)^{-1},
t \mapsto S_\beta^0(t)^{-1}$
are bounded and measurable on \([0,T]\).

\item[(C3)]  The  running cost $\bar{\ell}$ has a separated structure of the form:
    \[ \bar{\ell}(t,x,\alpha,\beta,\mu) = \bar \ell_{0}(t,x,\mu) + \bar{\ell}_{1}(t,x,\alpha,\beta), \]
    where $\bar{\ell}$ is convex in $(x, \alpha, \beta)$ for all $(t, \mu)$. Furthermore, there exists a constant $C_0 > 0$ such that the following growth conditions hold:
    \[ |\bar{\ell}(t,x,\alpha,\beta,\mu)| \leq C_{0}\left(1+|x|+|\alpha|+|\beta|+M_{2}(\mu)\right)^{2}, \]
    \[ |\phi(x,\mu)| \leq C_{0}\left(1+|x|+M_{2}(\mu)\right)^{2}, \]
    for all $(t,x,\alpha,\beta,\mu) \in [0, T]\times \mathbb{R}^n \times \mathbb{R}^{n\times d} \times \mathbb{R}^{n\times d} \times \mathcal{P}_2(\mathbb{R}^n)$.

\item [(C4)] $\phi$ is
$\lambda$-convex in $x$, uniformly in $\mu$, i.e.
\[
\phi(x,\mu)-\phi(x',\mu)
\geq
\phi_x(x',\mu)(x-x')+\lambda |x-x'|^2,
\qquad x,x'\in\mathbb R^n,
\]
for some constant $\lambda>0$.
\item [(C5)]
The functions \( \bar \ell_0(t, \cdot, \cdot) \)  and \( \phi \) are monotone in the sense of Lasry and Lions (see the following Definition \ref{monotone}).
\end{enumerate}
\begin{defi}
\label{monotone}
    A function $\psi: \mathbb{R}^n \times \mathcal{P}_2(\mathbb{R}^n) \to \mathbb R$
is said to be \emph{monotone} (in the sense of Lasry and Lions), if for all \( \mu \in \mathcal{P}_2(\mathbb{R}^n) \), the function   \( x \mapsto \psi (x, \mu) \) is at most of quadratic growth uniformly in $\mu$,  and for all \( \mu, \mu' \in \mathcal{P}_2(\mathbb{R}^n) \), the following inequality holds:
    \[
    \int_{\mathbb{R}^n} \left( \psi(x, \mu) - \psi(x, \mu') \right) \, d(\mu - \mu')(x) \geq 0.
    \]
\end{defi}

Under the assumptions established above, we  derive several structural  properties of the coefficients associated with the auxiliary MFG problem, which will be useful to prove the uniqueness results.
\begin{lemma}
\label{assumC}
Under assumptions \textnormal{(C1)--(C3)}, the  driver $f$ and the  running cost $\ell$, are defined by
\[
f(t,x,q,z) \equiv -b\big(t, x, \tilde{\sigma}(t, x, q), \tilde{\sigma}_0(t, x, z)\big),
\]
\[
\ell(t, x, q, z, \mu) \equiv \bar{\ell}\big(t, x, \tilde{\sigma}(t, x, q), \tilde{\sigma}_0(t, x, z), \mu\big),
\]
for all $(t,x,q,z,\mu) \in [0,T] \times \mathbb{R}^n \times \mathbb{R}^{n\times d} \times \mathbb{R}^{n\times d} \times \mathcal{P}_2(\mathbb{R}^n)$, satisfy the following properties:
\begin{itemize}
    \item[\rm (i)] The driver $f$ is affine in $(x, q, z)$ and is independent of the measure argument $\mu$.
    \item[\rm (ii)] The running cost $\ell$ is  convex in $(x, q, z)$.
    \item[\rm (iii)] Setting $\ell_0(t,x,\mu) \equiv \bar{\ell}_0(t,x,\mu)$ and $\ell_1(t,x,q,z) \equiv \bar{\ell}_1\big(t, x, \tilde{\sigma}(t, x, q), \tilde{\sigma}_0(t, x, z)\big)$, the function $\ell$ has the separated structure
    \[ \ell(t, x, q, z, \mu) = \ell_0(t, x, \mu) + \ell_1(t, x, q, z). \]
    \item[\rm (iv)] There exists a positive constant $C > 0$ such that the following quadratic growth condition holds globally:
    \[ |\ell(t, x, q, z, \mu)| \leq C \left( 1 + |x| + |q| + |z| + M_2(\mu) \right)^2. \]
    \item[\rm (v)] The function $\ell_0(t, \cdot, \cdot)$ inherits the Lasry--Lions monotonicity property from $\bar{\ell}_0(t, \cdot, \cdot)$.
\end{itemize}
\end{lemma}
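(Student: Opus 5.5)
The plan is to first make the inverse maps $\tilde\sigma$ and $\tilde\sigma_0$ completely explicit under (C1)--(C2), and then read off each property (i)--(v) from that formula. By (C2), solving $q=\sigma_1(t)+S_x(t)x+S_\alpha(t)\alpha$ for $\alpha$ gives
\[
\tilde\sigma(t,x,q)=S_\alpha(t)^{-1}\bigl(q-\sigma_1(t)-S_x(t)x\bigr),\qquad
\tilde\sigma_0(t,x,z)=S^0_\beta(t)^{-1}\bigl(z-\sigma_2(t)-S^0_x(t)x\bigr).
\]
Both maps are affine in $(x,q)$ and $(x,z)$ respectively, have bounded measurable coefficients in $t$ (since $S_\alpha^{-1}$ and $(S^0_\beta)^{-1}$ are bounded), and do not depend on $\mu$ by (C1). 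Here I read $S_\alpha(t)\alpha$ as a linear action on $\mathbb R^{n\times d}$, invertible by hypothesis.

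\emph{Items (i), (iii) and (ii).} For (i), we substitute into $b$. This gives $f=-b_0-B_x x-B_\alpha\tilde\sigma-B_\beta\tilde\sigma_0$, which is a composition of affine maps, hence affine in $(x,q,z)$ and free of $\mu$. For (iii), we use (C3) directly: $\ell=\bar\ell_0(t,x,\mu)+\bar\ell_1(t,x,\tilde\sigma,\tilde\sigma_0)$, and the first term does not involve $q,z$. For (ii), the map $\Phi_t:(x,q,z)\mapsto(x,\tilde\sigma(t,x,q),\tilde\sigma_0(t,x,z))$ is affine, and $\bar\ell(t,\cdot,\cdot,\cdot,\mu)$ is convex. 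A convex function composed with an affine map is convex, so $\ell(t,\cdot,\mu)=\bar\ell(t,\Phi_t(\cdot),\mu)$ is convex in $(x,q,z)$.

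\emph{Item (iv).} From the explicit formula and the boundedness of the coefficients there is $K$ independent of $t$ with $|\tilde\sigma(t,x,q)|\le K(1+|x|+|q|)$, and similarly for $\tilde\sigma_0$; the same bound also follows from the Lipschitz estimate in the proof of Proposition \ref{equivalence}. Inserting this into the quadratic growth bound of (C3) gives
\[
|\ell|\le C_0\bigl(1+|x|+K(1+|x|+|q|)+K(1+|x|+|z|)+M_2(\mu)\bigr)^2\le C(1+|x|+|q|+|z|+M_2(\mu))^2 .
\]

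\emph{Item (v).} This holds because $\ell_0=\bar\ell_0$ identically, so the Lasry--Lions inequality of Definition \ref{monotone} and the quadratic growth in $x$ transfer verbatim from (C5). Strictly speaking this needs (C5), not only (C1)--(C3); I would note this, or interpret (v) as conditional on $\bar\ell_0$ being monotone.

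There is no serious obstacle. The only delicate point is bookkeeping: the uniformity in $t$ of the constants requires the bounded inverse hypotheses in (C2), and convexity must be checked under the matrix action of $S_\alpha$ on $\mathbb R^{n\times d}$, where affineness is what matters.
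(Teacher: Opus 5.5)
Your proposal is correct and follows the paper's proof essentially step by step. You make $\tilde\sigma,\tilde\sigma_0$ explicit and affine via (C2), substitute into $b$ and into the separated $\bar\ell$ for (i) and (iii), and use convexity under an affine change of variables for (ii). For (iv) you plug the linear-growth bound on the inverses into (C3), and for (v) you use $\ell_0\equiv\bar\ell_0$. Your remark that (v) actually relies on (C5) (or must be read as conditional on $\bar\ell_0$ being monotone) is accurate, since the paper's proof of (v) likewise only observes that $\ell_0$ and $\bar\ell_0$ coincide.
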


\begin{proof}
We prove each item using the structural assumptions provided.

\smallskip
\noindent
\textbf{(i)--(iii).} By assumption \textnormal{(C2)}, the inverse control functions $\alpha = \tilde{\sigma}(t,x,q)$ and $\beta = \tilde{\sigma}_0(t,x,z)$ are explicitly given by the affine relations:
\begin{equation}\label{eq:affine_controls}
\begin{aligned}
\tilde{\sigma}(t, x, q) &= S_\alpha(t)^{-1} \big( q - \sigma_1(t) - S_x(t)x \big), \\
\tilde{\sigma}_0(t, x, z) &= S_\beta^0(t)^{-1} \big( z - \sigma_2(t) - S_x^0(t)x \big).
\end{aligned}
\end{equation}
Substituting \eqref{eq:affine_controls} into the linear expression for $b$ from \textnormal{(C2)} yields:
\[
f(t,x,q,z) = -b_0(t) - B_x(t)x - B_\alpha(t)\tilde{\sigma}(t, x, q) - B_\beta(t)\tilde{\sigma}_0(t, x, z).
\]
Since $\tilde{\sigma}$ and $\tilde{\sigma}_0$ are affine in $(x,q)$ and $(x,z)$ respectively, $f$ is affine in $(x,q,z)$. Moreover, by assumption \textnormal{(C1)}, the coefficients $b, \sigma, \sigma_0$ do not depend on the measure argument $\mu$, which guarantees that the driver $f$ does not depend on $\mu$.

Similarly, substituting \eqref{eq:affine_controls} into the separated form of $\bar{\ell}$ given in \textnormal{(C3)} gives:
\[
\ell(t, x, q, z, \mu) = \bar{\ell}_0(t,x,\mu) + \bar{\ell}_1\big(t, x, \tilde{\sigma}(t, x, q), \tilde{\sigma}_0(t, x, z)\big).
\]
Defining $\ell_0 \equiv \bar{\ell}_0$ and $\ell_1(t, x, q, z) \equiv \bar{\ell}_1(t,x,\tilde{\sigma}(t,x,q),\tilde{\sigma}_0(t,x,z))$, we immediately obtain the desired separated structure.

\noindent
\textbf{(ii)} Let $(x_1, q_1, z_1),(x_2, q_2, z_2) \in \mathbb{R}^n \times \mathbb{R}^{n\times d} \times \mathbb{R}^{n\times d}$, and let $\lambda \in [0,1]$. Since the mappings \eqref{eq:affine_controls} are affine, we have:
\[
\tilde{\sigma}(t, x_\lambda, q_\lambda) = \lambda \tilde{\sigma}(t, x_1, q_1) + (1-\lambda)\tilde{\sigma}(t, x_2, q_2) \equiv \lambda \alpha_1 + (1-\lambda)\alpha_2,
\]
\[
\tilde{\sigma}_0(t, x_\lambda, z_\lambda) = \lambda \tilde{\sigma}_0(t, x_1, z_1) + (1-\lambda)\tilde{\sigma}_0(t, x_2, z_2) \equiv \lambda \beta_1 + (1-\lambda)\beta_2,
\]
where we set $x_\lambda = \lambda x_1 + (1-\lambda)x_2$, $q_\lambda = \lambda q_1 + (1-\lambda)q_2$, and $z_\lambda = \lambda z_1 + (1-\lambda)z_2$. Exploiting convexity of $\bar{\ell}$ in $(x,\alpha,\beta)$ from \textnormal{(C3)}, we deduce:
\begin{align*}
\ell(t, x_\lambda, q_\lambda, z_\lambda, \mu) &= \bar{\ell}\big(t, \lambda x_1 + (1-\lambda)x_2, \lambda \alpha_1 + (1-\lambda)\alpha_2, \lambda \beta_1 + (1-\lambda)\beta_2, \mu\big) \\
&\leq \lambda \bar{\ell}(t, x_1, \alpha_1, \beta_1, \mu) + (1-\lambda)\bar{\ell}(t, x_2, \alpha_2, \beta_2, \mu) \\
&= \lambda \ell(t, x_1, q_1, z_1, \mu) + (1-\lambda)\ell(t, x_2, q_2, z_2, \mu),
\end{align*}
which shows that $\ell$ is  convex in $(x,q,z)$.

\smallskip
\noindent
\textbf{(iv)}  Since the time-dependent coefficients in (C2) and the inverse matrices $S_\alpha (t)^{-1}$, $S_\beta^0(t)^{-1}$ are bounded and measurable in $t$, there exists a uniform constant $K > 0$ such that $|S_\alpha(t)^{-1}| + |S_x(t)| +|\sigma_1(t)| \leq K$, and similarly for the $\sigma_0$ coefficients. Applying the triangle inequality to \eqref{eq:affine_controls} yields the estimates
\[
|\tilde{\sigma}(t, x, q)| \leq K\big(|q| + K|x| + K\big) \leq K_1\big(1 + |x| + |q|\big),
\]
\[
|\tilde{\sigma}_0(t, x, z)| \leq K\big(|z| + K|x| + K\big) \leq K_1\big(1 + |x| + |z|\big),
\]
for a sufficiently large constant $K_1 > 0$. Substituting these estimates into the quadratic growth bound of $\bar{\ell}$ from \textnormal{(C3)} gives:
\begin{align*}

|\ell(t,x,q,z,\mu)| &\leq C_0 \left(1 + |x| + K_1(1 + |x| + |q|) + K_1(1 + |x| + |z|) + M_2(\mu)\right)^2 \\
&\leq C_0 \left((1+2K_1) + (1+2K_1)|x| + K_1|q| + K_1|z| + M_2(\mu)\right)^2 \\
& \leq C \left( 1 + |x| + |q| + |z| + M_2(\mu) \right)^2,
\end{align*}
for some constant $C>0$, which is exactly property (iv).\\
\textbf{(v)} This property is  obvious  since $\ell_0(t,x,\mu) \equiv \bar{\ell}_0(t,x,\mu)$ for all $(t,x,\mu)$.
\end{proof}
Now we  establish uniqueness of the optimal control for the auxiliary MFG problem. This result will play a key role in the analysis and it will also be used in the applications of Section \ref{sec:appl}.
\begin{prop}
\label{uniquexi}
Let assumptions (C1)-(C4) hold. Then the optimal control problem \eqref{auxoptimal} admits at most one minimizer in $L^2_a(\mathcal D)$.
\end{prop}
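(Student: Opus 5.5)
Uniqueness will follow from strict convexity of the cost $J(\cdot;\hat\mu)$ on the convex set $L^2_a(\mathcal D)$. Suppose $\xi^1,\xi^2\in L^2_a(\mathcal D)$ are both minimizers with common value $m=\inf J$, and let $(X^i,q^i,z^i)$ be the associated BSDE solutions. Set $\xi^\lambda=\lambda\xi^1+(1-\lambda)\xi^2$ for $\lambda\in(0,1)$. Since $\mathcal D$ is convex, $\xi^\lambda\in L^2(\mathcal D)$.

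The first key step is that feasibility and the state are preserved under convex combinations. By Lemma \ref{assumC}(i), the driver $f$ is affine in $(x,q,z)$ and does not depend on $\mu$. Hence the process $(\lambda X^1+(1-\lambda)X^2,\ \lambda q^1+(1-\lambda)q^2,\ \lambda z^1+(1-\lambda)z^2)$ solves \eqref{BSDE CN} with terminal value $\xi^\lambda$. Indeed, $f(t,\lambda x_1+(1-\lambda)x_2,\dots)=\lambda f(t,x_1,\dots)+(1-\lambda)f(t,x_2,\dots)$ because affine maps preserve convex combinations with weights summing to one. By uniqueness of BSDE solutions this process is $(X^\lambda,q^\lambda,z^\lambda)$. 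In particular $X^\lambda_0=\lambda a+(1-\lambda)a=a$, so $\xi^\lambda\in L^2_a(\mathcal D)$.

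The second step is the convexity estimate. By Lemma \ref{assumC}(ii), $\ell$ is convex in $(x,q,z)$, which gives $\ell(t,X^\lambda_t,q^\lambda_t,z^\lambda_t,\hat\mu_t)\le \lambda\ell(\cdot^1)+(1-\lambda)\ell(\cdot^2)$ pointwise. For the terminal cost, (C4) applied at the base point $x'=\xi^\lambda$ with $x=\xi^1$ and with $x=\xi^2$, weighted by $\lambda$ and $1-\lambda$ and summed, makes the gradient terms cancel. This yields the standard $\lambda$-convexity inequality
\[
\phi(\xi^\lambda,\hat\mu^X_T)\le \lambda\phi(\xi^1,\hat\mu^X_T)+(1-\lambda)\phi(\xi^2,\hat\mu^X_T)-\lambda_\phi\,\lambda(1-\lambda)|\xi^1-\xi^2|^2,
\]
where $\lambda_\phi>0$ denotes the constant in (C4), renamed to avoid a clash with the weight. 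All the expectations involved are finite thanks to Lemma \ref{assumC}(iv), the growth bound on $\phi$ in (C3), and the square-integrability of the BSDE solutions and of $M_2(\hat\mu_t)$. Taking expectations therefore gives
\[
m\le J(\xi^\lambda)\le \lambda m+(1-\lambda)m-\lambda_\phi\lambda(1-\lambda)\mathbb E|\xi^1-\xi^2|^2 .
\]
This forces $\mathbb E|\xi^1-\xi^2|^2=0$, i.e. $\xi^1=\xi^2$ a.s.

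The main point needing care is the first step, namely that the auxiliary problem is genuinely convex. Two facts are essential here: the affine structure of $f$, so that the initial-value constraint $X_0=a$ is preserved, and the independence of $f$ from $\mu$, which is harmless here since the flow is fixed anyway. A minor additional check is that $\phi$ is differentiable in $x$, as required by (A1), so that (C4) can be applied as stated. Strictness comes only from the terminal cost, which suffices since the control variable is $\xi$ itself.
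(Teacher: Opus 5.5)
Your proposal is correct and follows the paper's proof essentially step for step. Both arguments show that $L^2_a(\mathcal D)$ is convex, using the affine driver and BSDE uniqueness. Both then combine convexity of $\ell$ from Lemma \ref{assumC} with the $\lambda$-convexity of $\phi$ from (C4) to get $\inf J\le J(\xi^\theta)\le \inf J-\lambda\theta(1-\theta)\mathbb E|\xi^1-\xi^2|^2$. One small addition on your side: you derive the convex-combination inequality for $\phi$ from the first-order form of (C4), weighting the two tangent inequalities at $\xi^\lambda$ so the gradient terms cancel, whereas the paper simply asserts it.
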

\begin{proof}
Let $\xi^1,\xi^2\in L^2_a(\mathcal D)$ be two optimal minimizers. We prove that 
\[\xi^1=\xi^2 \quad \text{a.s.}.\] 
Fix $\theta\in(0,1)$ and define
\[
\xi^\theta:=\theta \xi^1+(1-\theta)\xi^2.
\]
Since $\mathcal D$ is convex, under assumptions \textnormal{(C1)--(C2)} $f$ is affine (for the proof see Lemma \ref{assumC}) and $\xi^1,\xi^2\in L^2_a(\mathcal D)$, we first show that $\xi^\theta\in L_a^2(\mathcal D)$. Consequently, $L_a^2(\mathcal D)$ is convex.\\
Let $(X^i,q^i,z^i)$ be the solution of \eqref{BSDE CN} corresponding to
$\xi^i$, for $i=1,2$. Since $\xi^1,\xi^2$ are $a$-feasible, we have
$X_0^1=X_0^2=a$. Define
\[
X_t^\theta:=\theta X_t^1+(1-\theta)X_t^2,
\qquad
q_t^\theta:=\theta q_t^1+(1-\theta)q_t^2,
\qquad
z_t^\theta:=\theta z_t^1+(1-\theta)z_t^2.
\]
Using the affine structure of $f$, we obtain
\[
\theta f(t,X_t^1,q_t^1,z_t^1)
+(1-\theta)f(t,X_t^2,q_t^2,z_t^2)=
f(t, X_t^\theta, q_t^\theta, z_t^\theta).
\]
Therefore, by multiplying the BSDEs for $(X^1,q^1,z^1)$ and
$(X^2,q^2,z^2)$ by $\theta$ and $1-\theta$, respectively, and adding them, we see that $( X^\theta, q^\theta, z^\theta)$ is a unique solution to  \eqref{BSDE CN} with terminal condition $\xi^\theta$. Indeed,
\[
X_T^\theta=\theta X_T^1+(1-\theta)X_T^2= \theta \xi^1+(1-\theta)\xi^2
=\xi^\theta.
\]
In particular,
\[
X_0^\theta= \theta X_0^1+(1-\theta)X_0^2=\theta a+(1-\theta)a=a.
\]
Hence $\xi^\theta\in L^2_a(\mathcal D)$. Thus $L^2_a(\mathcal D)$ is convex.
We now prove convexity of the cost functional. Under assumption (C3)
$\ell$  is convex in $(x,q,z)$ (for the proof see Lemma \ref{assumC}),
\[
\begin{aligned}
\ell(t,X_t^\theta,q_t^\theta,z_t^\theta,\mu_t) 
\leq \theta \ell(t,X_t^1,q_t^1,z_t^1,\mu_t) +(1-\theta)\ell(t,X_t^2,q_t^2,z_t^2,\mu_t).
\end{aligned}
\]
Integrating over $[0,T]$ and taking expectations gives
\begin{equation}
\label{lconvex}
    \begin{aligned}
\mathbb E\left[\int_0^T
\ell(t,X_t^\theta,q_t^\theta,z_t^\theta,\mu_t)\,dt\right]
&\leq
\theta \mathbb E \left[\int_0^T
\ell(t,X_t^1,q_t^1,z_t^1,\mu_t)\,dt \right] \\
& \quad + (1-\theta)\mathbb E \left[\int_0^T
\ell(t,X_t^2,q_t^2,z_t^2,\mu_t)\,dt \right].
\end{aligned}
\end{equation}
On the other hand, by assumption (C4) since  $\phi$ is $\lambda$-convex in $x$, we have
\[
\phi(\xi^\theta,\mu_T^X) \leq \theta \phi(\xi^1,\mu_T^X)+
(1-\theta)\phi(\xi^2,\mu_T^X)-\lambda\theta(1-\theta)|\xi^1- \xi^2|^2.
\]
Taking expectations, we obtain
\begin{equation}
\label{phiconvex}
\mathbb E[\phi(\xi^\theta,\mu_T^X)] \leq \theta \mathbb E[\phi(\xi^1,\mu_T^X)] +(1-\theta)\mathbb E[\phi(\xi^2,\mu_T^X)]
-\lambda\theta(1-\theta)\mathbb E \left[|\xi^1-\xi^2|^2 \right].
\end{equation}
Adding the  two inequalities \eqref{lconvex} and  \eqref{phiconvex} yields
\[
J(\xi^\theta)\leq\theta J(\xi^1)+(1-\theta)J(\xi^2)-
\lambda\theta(1-\theta)\mathbb E \left[ |\xi^1-\xi^2|^2 \right].
\]
Since $\xi^1$ and $\xi^2$ are both minimizers,
\[
J(\xi^1)=J(\xi^2)=\inf_{\xi\in L^2_a(\mathcal D)}J(\xi):=\inf J.
\]
Moreover, since $\xi^\theta\in L^2_a(\mathcal D)$, we also have
\(
J(\xi^\theta)\geq \inf J
\).
Therefore,
\[
\inf J\leq J(\xi^\theta)  \leq\inf J-\lambda\theta(1-\theta)\mathbb E \left[|\xi^1-\xi^2|^2 \right].
\]
Hence
$\mathbb E\left[|\xi^1-\xi^2|^2\right]=0$.
Consequently, $\xi^1=\xi^2$  a.s., and the problem admits at most one minimizer in  $L_a^2(\mathcal D)$.
\end{proof}
\begin{rem}\label{uniq-appl}
In the applications of Section 4,  the terminal cost functional
\[
\phi(x,\bar{\mu}) = -x + \frac{\gamma}{2}(x - \theta\bar{\mu})^2
\]
is uniformly $\lambda$-convex in $x \in \mathbb{R}$, uniformly in $\bar{\mu}$, with any $\lambda \in (0, \gamma / 2]$. 
\end{rem}

The following  proposition yields uniqueness of the mean field flow under the Lasry-Lions monotonicity condition. Despite this condition is not satisfied in the applications of Section 4 (see Remark \ref{Lasry-appl} below), we nevertheless include it as it provides a general uniqueness criterion within the present framework, which is of independent theoretical interest. Since its proof uses the same arguments as in the classical case without constraints and without common noise  (see  \cite[Theorem 3.29, p.169]{Carmona Delarue I}), we omit it.

\begin{prop}
\label{Lasry}
 Let assumptions \textnormal{(C1)--(C3), (C5)} hold,  and  suppose that for any given $(\mathcal{F}_t^0)$-adapted process $\hat \mu^X =(\hat \mu^X _t)_{0\leq t\leq T}$ with values in  $\mathcal{P}_2(\mathbb{R}^n) $, the optimal control problem 
 \begin{equation} \label{eq:opt_problem}
\inf_{\xi \in L^2_a(\mathcal{D})} \, \mathbb{E} \Bigg[  \int_0^T \ell(t, X_t, q_t, z_t ,  \hat \mu_t^X) \, dt + \phi(\xi, \hat \mu^X_T) \Bigg],
\end{equation}
subject to
\begin{equation}
\begin{cases}
    -dX_t =  \, f(t, X_t, q_t, z_t) \, dt - q_t \, dW_t-z_tdW_t^0\\  
X_0 =  \, a, \quad X_T = \xi,
\end{cases}
\end{equation}
admits a unique minimizer \( \hat \xi\in L^2_a(\mathcal{D})\). Let $( \hat X, \hat q, \hat z )$ denote the corresponding optimal state. Then, there exists at most one flow $\hat \mu^X=(\hat \mu_t^X)_{0\leq t\leq T}$ such that
\begin{equation}
\label{cons}
  \hat \mu_t^X=\mathcal{L}(\hat X_t|\mathcal{F}_t^0), \quad t \in [0, T]. 
\end{equation}
\end{prop}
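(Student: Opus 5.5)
We argue by contradiction, following the classical Lasry--Lions argument of \cite[Theorem 3.29]{Carmona Delarue I}, now conditioned on the common noise. Suppose $\hat \mu^{X,1}$ and $\hat \mu^{X,2}$ are two flows satisfying \eqref{cons}. For $i=1,2$ let $\hat\xi^i\in L^2_a(\mathcal D)$ be the unique minimizer of \eqref{eq:opt_problem} under $\hat\mu^{X,i}$, with optimal state $(X^i,q^i,z^i)$, so that $\hat\mu^{X,i}_t=\mathcal L(X^i_t\mid\mathcal F^0_t)$. The key structural facts come from Lemma \ref{assumC}. By (C1)--(C2), $f$ does not depend on the measure, so the feasible set $L^2_a(\mathcal D)$ and the map $\xi\mapsto(X^\xi,q^\xi,z^\xi)$ are the same for both flows; in particular $\hat\xi^2$ is admissible for the problem driven by $\hat\mu^{X,1}$ and vice versa. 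By the separated structure in Lemma \ref{assumC}(iii), the measure enters only through $\ell_0(t,x,\mu)$ and $\phi(x,\mu)$.

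Write $J^i(\xi)$ for the cost under flow $i$. We assume $\hat\mu^{X,1}\neq\hat\mu^{X,2}$, so the two flows differ on a set of positive $dt\otimes d\mathbb P$ measure or at $T$. Then $\hat\xi^1\neq\hat\xi^2$, since equal minimizers would give equal states $X^1=X^2$ and hence equal conditional laws. Uniqueness of the minimizer then gives the strict inequalities
\[
J^1(\hat\xi^2)-J^1(\hat\xi^1)>0,\qquad J^2(\hat\xi^1)-J^2(\hat\xi^2)>0.
\]
Adding them, the $\ell_1$ terms cancel because they are measure independent and evaluated on the same states. We obtain
\[
\mathbb E\Big[\int_0^T\big(\ell_0(t,X^2_t,\hat\mu^{X,1}_t)-\ell_0(t,X^2_t,\hat\mu^{X,2}_t)-\ell_0(t,X^1_t,\hat\mu^{X,1}_t)+\ell_0(t,X^1_t,\hat\mu^{X,2}_t)\big)dt
+\phi(\hat\xi^2,\hat\mu^{X,1}_T)-\phi(\hat\xi^2,\hat\mu^{X,2}_T)-\phi(\hat\xi^1,\hat\mu^{X,1}_T)+\phi(\hat\xi^1,\hat\mu^{X,2}_T)\Big]>0 .
\]

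Next we condition on $\mathcal F^0_t$ (resp. $\mathcal F^0_T$) inside the expectation. Since $\hat\mu^{X,i}_t$ is $\mathcal F^0_t$-measurable and equals $\mathcal L(X^i_t\mid\mathcal F^0_t)$, we have for instance
\[
\mathbb E\big[\ell_0(t,X^i_t,\hat\mu^{X,j}_t)\mid\mathcal F^0_t\big]=\int\ell_0(t,x,\hat\mu^{X,j}_t)\,d\hat\mu^{X,i}_t(x).
\]
This uses a disintegration/freezing lemma, justified by the quadratic growth in (C3) and Lemma \ref{assumC}(iv) together with the square integrability of $X^i$. The left-hand side then becomes
\[
-\mathbb E\Big[\int_0^T\!\!\int\big(\ell_0(t,x,\hat\mu^{X,1}_t)-\ell_0(t,x,\hat\mu^{X,2}_t)\big)d(\hat\mu^{X,1}_t-\hat\mu^{X,2}_t)(x)\,dt
+\int\big(\phi(x,\hat\mu^{X,1}_T)-\phi(x,\hat\mu^{X,2}_T)\big)d(\hat\mu^{X,1}_T-\hat\mu^{X,2}_T)(x)\Big].
\]
By (C5) and Lemma \ref{assumC}(v) this quantity is $\le 0$, which contradicts the strict inequality. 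Hence $\hat\mu^{X,1}=\hat\mu^{X,2}$.

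The main technical point is the conditioning step with common noise. It requires that $\hat\mu^{X,i}$ be $\mathcal F^0_t$-adapted and that $X^i_t$ have regular conditional law $\hat\mu^{X,i}_t$ given $\mathcal F^0_t$, which is exactly \eqref{cons}. It also requires enough integrability, via the quadratic growth bounds, to apply Fubini and the freezing lemma. A secondary subtlety is that the measure argument of $\ell_0$ is only the state marginal, as in \eqref{eq:opt_problem}, so the consistency condition on $\mu^X$ alone suffices.
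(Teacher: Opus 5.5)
Your proposal is correct and follows the approach the paper intends: the paper omits this proof and points to the classical Lasry--Lions argument of Carmona--Delarue (Vol.~I, Theorem 3.29), which is what you carry out. The two ingredients specific to this setting are both handled properly. First, $f$ is independent of $\mu$, so the two problems share the feasible set $L^2_a(\mathcal D)$ and the $\ell_1$ terms cancel. Second, you freeze the $\mathcal F^0_t$-measurable flows and integrate against the conditional laws $\mathcal L(X^i_t\mid\mathcal F^0_t)$, which is what makes the argument go through with common noise.
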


\begin{rem}\label{Lasry-appl} 
We observe that in the applications of Section 4, the terminal cost functional
\[
\phi(x,\mu) = -x + \frac{\gamma}{2}\left(x - \theta \int_{\mathbb R}x\,d\mu(x)\right)^2
\]
is not monotone in the sense of Lasry-Lions. Consequently, Proposition \ref{Lasry} is not applicable. Hence, in order to get uniqueness we will rely on a contraction argument instead.
\end{rem}

A direct consequence of the previous two propositions is the following corollary, establishing a uniqueness result for the original MFG with constraint.
\begin{cor}
\label{uniquealpha}
Under the assumptions \textnormal{(C1)--(C5)}  there exists at most one solution $(\hat{\alpha}, \hat{\beta}, \hat{\mu})$ to the original MFG with terminal state constraint (Definition \ref{def-MFG-constraint}). 
\end{cor}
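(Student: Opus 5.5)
The plan is to transfer uniqueness from the auxiliary formulation to the original one, using Proposition \ref{equivalence} as the bridge and Propositions \ref{uniquexi} and \ref{Lasry} as the two uniqueness ingredients. Suppose $(\hat\alpha^i,\hat\beta^i,\hat\mu^i)$, $i=1,2$, are two solutions of the MFG with terminal state constraint. By Proposition \ref{equivalence}, each gives a solution $(\hat\xi^i,\hat\mu^i)$ of the auxiliary MFG (Definition \ref{def-auxMFG}), with $\hat\xi^i=\hat X^i_T$ and states $(\hat X^i,\hat q^i,\hat z^i)$ solving \eqref{BSDE CN}. First check that (C1)--(C5) imply (A1)--(A5), so that Proposition \ref{equivalence} applies. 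Here (A5) comes from the invertibility of $S_\alpha,S^0_\beta$ with bounded inverses, since $|S_\alpha(\alpha_1-\alpha_2)|\ge |\alpha_1-\alpha_2|/\|S_\alpha^{-1}\|_\infty$. The differentiability and growth requirements on the costs are taken as standing assumptions.

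\textbf{Step 1 (uniqueness of the flow of state marginals).} By (C1) and (C3), the control problem \eqref{auxoptimal} sees $\hat\mu$ only through its state marginal: $f$ does not depend on $\mu$ (Lemma \ref{assumC}(i)), and $\ell=\ell_0(t,x,\mu^X)+\ell_1(t,x,q,z)$ by Lemma \ref{assumC}(iii). So for each fixed $(\mathcal F^0_t)$-adapted flow $\mu^X$, problem \eqref{eq:opt_problem} is exactly the problem of Proposition \ref{uniquexi}. That proposition, under (C1)--(C4), gives at most one minimizer. Existence of a minimizer is assumed in Proposition \ref{Lasry} for arbitrary flows; I will instead apply the Lasry--Lions argument directly to the two candidate equilibria, which only needs their minimizers, and these exist by the optimality condition. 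The consistency condition gives $\hat\mu^{i,X}_t=\mathcal L(\hat X^i_t|\mathcal F^0_t)$. The Lasry--Lions computation then runs as usual: add the two optimality inequalities $J(\hat\xi^1;\hat\mu^1)\le J(\hat\xi^2;\hat\mu^1)$ and $J(\hat\xi^2;\hat\mu^2)\le J(\hat\xi^1;\hat\mu^2)$ (the $\ell_1$ terms cancel), condition on $\mathcal F^0$, and use (C5) via Lemma \ref{assumC}(v). This yields equality in the cross inequalities, and combined with strict uniqueness of the minimizer from Proposition \ref{uniquexi} it forces $\hat\xi^1=\hat\xi^2$. This is precisely the content of Proposition \ref{Lasry}. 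From this, $\hat\mu^{1,X}=\hat\mu^{2,X}$ and $\hat\xi^1=\hat\xi^2$ a.s.

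\textbf{Step 2 (recovering controls and joint law).} With $\hat\xi^1=\hat\xi^2$ and $f$ independent of $\mu$, uniqueness for the BSDE \eqref{BSDE CN} gives $(\hat X^1,\hat q^1,\hat z^1)=(\hat X^2,\hat q^2,\hat z^2)$. The inverse maps $\tilde\sigma,\tilde\sigma_0$ in \eqref{eq:affine_controls} do not depend on $\mu$ by (C1), so \eqref{eq:xi-to-alpha-beta} gives $\hat\alpha^1=\hat\alpha^2$ and $\hat\beta^1=\hat\beta^2$ $dt\otimes d\mathbb P$-a.e. The consistency condition then identifies $\hat\mu^1_t=\mathcal L(\hat X_t,\hat\alpha_t,\hat\beta_t|\mathcal F^0_t)=\hat\mu^2_t$.

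The main obstacle is Step 1, specifically the mismatch with the hypothesis of Proposition \ref{Lasry}, which requires existence of a minimizer for every flow. I will avoid this by running the Lasry--Lions argument only on the two equilibria, as above. The point that needs care is the monotonicity step with common noise: one must write $\mathbb E[\ell_0(t,\hat X^1_t,\mu^1)-\ell_0(t,\hat X^1_t,\mu^2)-\ell_0(t,\hat X^2_t,\mu^1)+\ell_0(t,\hat X^2_t,\mu^2)]$ as $\mathbb E\int(\ell_0(x,\mu^1)-\ell_0(x,\mu^2))\,d(\mu^1_t-\mu^2_t)(x)$ by conditioning on $\mathcal F^0_t$, where $\mu^i_t$ is $\mathcal F^0_t$-measurable, and similarly for $\phi$ at time $T$. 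The quadratic growth in (C3) and Definition \ref{monotone} guarantees integrability.
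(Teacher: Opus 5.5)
Your proposal is correct and follows the paper's own route. You pass to the auxiliary MFG via Proposition \ref{equivalence}, get uniqueness of the state flow from the Lasry--Lions argument together with the strict uniqueness of the minimizer in Proposition \ref{uniquexi}, and then recover $(\hat X,\hat q,\hat z)$, $(\hat\alpha,\hat\beta)$ and the joint law through BSDE uniqueness and the $\mu$-independent inverse maps. The one difference is that you run the Lasry--Lions computation directly on the two equilibria instead of citing Proposition \ref{Lasry}, and this avoids that proposition's hypothesis that a minimizer exists for every flow, which the paper's proof uses without verifying under (C1)--(C5).
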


\begin{proof}
By Proposition \ref{equivalence}, there is a bijective correspondence between solutions $(\hat{\alpha}, \hat{\beta}, \hat{\mu})$ of the original MFG and solutions $(\hat{\xi}, \hat{\mu})$ of the auxiliary MFG. As established in Proposition \ref{equivalence} (based on the Lipschitz continuity and linear growth of the inverse maps $(\tilde{\sigma}, \tilde{\sigma}^0)$), the square integrability of the auxiliary state $(\hat{X}, \hat{q}, \hat{z}) \in \mathcal{S}^2_n \times \mathcal{H}^2_{n \times d} \times \mathcal{H}^2_{n \times d}$ automatically guarantees the admissibility of the reconstructed controls $(\hat{\alpha}, \hat{\beta}) \in \mathcal{A}$.

Regarding uniqueness, Proposition \ref{Lasry} yields the uniqueness of the equilibrium state flow $\hat{\mu}^X$. For a given state flow, the optimal terminal variable $\hat{\xi}$ is unique by Proposition \ref{uniquexi}, and the associated BSDE solution $(\hat{X}, \hat{q}, \hat{z})$ is unique by well-posedness. Since the inverse maps $(\tilde{\sigma}, \tilde{\sigma}^0)$ reconstruct the optimal controls $(\hat{\alpha}, \hat{\beta})$ uniquely, the full conditional law $\hat{\mu}_t=\mathcal{L}(\hat X_t,\hat\alpha_t,\hat\beta_t\mid \mathcal{F}_t^0)$ is uniquely determined for every $t$, and hence the uniqueness of the full conditional law $\hat{\mu}$ follows. Thus, the solution to the original MFG is unique.
\end{proof}

\subsection{Existence of solutions for the FBSDE system in the linear case}

We consider the FBSDE system \eqref{FBSDE MFG CN} with $\ell = 0$, where the coefficients are linear and independent of the measure variable, except for $\phi$, which depends on the measure through its conditional average. Since in the FBSDE system \eqref{FBSDE MFG CN} we have $h_0>0$, we may normalize the adjoint equation and the terminal variational inequality by dividing by $h_0$. Hence, without loss of generality, we take $h_0=1$. We will prove existence and uniqueness of solutions of the FBSDE system in the one-dimensional case,  where $\mathcal{D} =[c, \infty)$, for some $c \in \mathbb R$,  together with the  condition 
\begin{equation}
\label{consistency}
    m_T=\mathbb{E}(X_T\mid \mathcal F_T ^0).
\end{equation}
More specifically, to establish the existence and uniqueness result we need some more technical assumptions:
\begin{itemize}
    \item[(A6)] The drift  function $f$ is linear in $(x, q, z)$, i.e.,
    
    \[ f(t,x,q,z) = b_0(t)x + qc_0(t) + z \tilde c_0(t), \quad (t,x,q,z) \in [0, T]\times \mathbb{R}\times\mathbb{R}^{ d}\times \mathbb{R}^{d} ,\]
  where
    \[ (b_0 , c_0 ,\tilde c_0): [0,T] \to \mathbb R \times \mathbb R^{d} \times \mathbb R^{d}\] are bounded measurable functions.
    \item[(A7)] We assume that the function $\phi_x(x, m)$ is linear in both arguments and satisfies
\[
\phi_x(x, m) = k_0+k_1 x + k_2 m, \quad (x,m)\in \mathbb R^2,
\]
for some constants $k_i$, $i=0,1,2$, satisfying $k_1 >0$, $k_2 \leq 0$, and $k_1 + k_2 >0$.
  \end{itemize}
 Under these assumptions, the mean field FBSDE system \eqref{FBSDE MFG CN}  reduces to the following:
\begin{equation}
\label{FBSDE_Cont CN l}
    \begin{cases}
    dX_t=-[b_0(t)X_t+q_tc_0(t)^\top +z_t\tilde{c}_0(t)^\top]dt +q_tdW_t+z_tdW_t^0\\
      X_0 =a, \quad X_T=\xi\\
  
    dY_t=b_0(t)Y_tdt+c_0(t)Y_tdW_t
    +\tilde{c}_0(t)Y_tdW_t^0\\
   Y_0=h_1\\

    \big( Y_T+ \phi_x(\xi, m_T)\big)\big( \xi'-\xi \big) \geq 0 \ a.s., \ \forall \xi' \in L^2([c, \infty)).        
   \end{cases}
\end{equation}
We will be using the notation
\[\mathcal{E}_t(c,c') := \exp\left( \int_0 ^t (c(s)dW_s + c'(s)dW^0 _s)  - \frac{1}{2}\int_0 ^t (|c(s)|^2 + |c'(s)|^2)ds \right)\] for the Dol\'eans-Dade exponential of a Wiener integral with respect to the $2d$-dimensional Brownian motion $(W,W^0)$, where $(c,c'):[0,T] \to \mathbb R^d \times \mathbb R^d$ are measurable and bounded functions.
\begin{theo}
\label{EXUN} 
Let $a > c \ e^{\int_0^T b_0(t)\, dt}$.    Under Assumptions  (A1)-(A7), the FBSDE system \eqref{FBSDE_Cont CN l} has a unique  solution.
\end{theo}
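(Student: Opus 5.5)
The plan is to reduce \eqref{FBSDE_Cont CN l} to a scalar equation for the unknown $h_1$, coupled with a fixed-point equation for $m_T$. I will solve the fixed point by a contraction and then the scalar equation by monotonicity and continuity.

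\textbf{Step 1: explicit formulas.} The adjoint equation is linear with no forcing term, so
\[Y_t = h_1 \Gamma_t, \qquad \Gamma_t := e^{\int_0^t b_0(s)ds}\,\mathcal E_t(c_0,\tilde c_0) > 0 .\]
An It\^o computation shows that the cross-variation terms cancel the $q c_0^\top + z\tilde c_0^\top$ part of the drift, so $\Gamma_t X_t$ is a martingale. Hence the forward constraint $X_0=a$ becomes $\mathbb E[\Gamma_T\xi]=a$. In the other direction, given $\xi\in L^2$, the linear BSDE for $(X,q,z)$ is uniquely solvable.

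For $\mathcal D=[c,\infty)$, the variational inequality in \eqref{FBSDE_Cont CN l} holds for all $\xi'$ if and only if, pointwise, either $\xi=c$ and $Y_T+\phi_x\ge 0$, or $\xi>c$ and $Y_T+\phi_x=0$. By (A7) with $k_1>0$ this is equivalent to
\[
\xi = c \vee \Big(-\tfrac{h_1\Gamma_T + k_0 + k_2 m_T}{k_1}\Big).
\]
So a solution is determined by a pair $(h_1,m_T)$ satisfying two conditions: the consistency condition $m_T=\mathbb E[\xi\mid\mathcal F^0_T]$, and $\mathbb E[\Gamma_T\xi]=a$. Conversely, every solution must have this form, which will give uniqueness.

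\textbf{Step 2: fixed point for $m_T$ with $h_1$ fixed.} Define on $L^2(\mathcal F^0_T)$ the map
\[T_{h}(m):=\mathbb E\big[c\vee(-(h\Gamma_T+k_0+k_2m)/k_1)\mid\mathcal F^0_T\big].\]
Since $x\mapsto c\vee x$ is $1$-Lipschitz and $m$ is $\mathcal F^0_T$-measurable, $|T_h(m)-T_h(m')|\le (|k_2|/k_1)|m-m'|$. The conditions $k_2\le0<k_1+k_2$ give $|k_2|/k_1<1$, so $T_h$ is a contraction with a unique fixed point $m(h)$. The map $T_h(m)$ is nondecreasing in $m$ (because $-k_2\ge 0$) and nonincreasing in $h$ (because $\Gamma_T>0$). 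Iterating from a common starting point, this gives that $m(h)$ is nonincreasing in $h$. The contraction estimate also gives Lipschitz dependence of $m(h)$ on $h$ in $L^2$, hence continuity.

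\textbf{Step 3: the scalar equation.} Let $\xi(h)$ be the associated terminal variable and $g(h):=\mathbb E[\Gamma_T\xi(h)]$. For $h<h'$, the difference of the arguments inside $c\vee(\cdot)$ is
\[\big[(h'-h)\Gamma_T + k_2(m(h')-m(h))\big]/k_1.\]
This is strictly positive, because $k_2\le 0$ and $m(h')\le m(h)$. Hence $\xi(h)\ge\xi(h')$, with strict inequality on the event where $\xi(h)>c$.

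That event has positive probability for every $h$: $\Gamma_T$ is lognormal, so it takes arbitrarily small values with positive probability, which also handles the case $h>0$; for $h\le 0$ one argues more directly. Therefore $g$ is continuous and strictly decreasing, with $g(h)>c\,\mathbb E[\Gamma_T]=c\,e^{\int_0^T b_0}$.

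For the limits, I expect $g(h)\to c\,e^{\int_0^Tb_0}$ as $h\to+\infty$ by dominated convergence, since $\xi(h)\downarrow c$ after controlling $m(h)$. I also expect $g(h)\to+\infty$ as $h\to-\infty$, since $\xi(h)\ge -(h\Gamma_T+k_0+k_2m(h))/k_1$ and $-k_2 m(h)\ge -k_2 c\cdot(\text{sign})$ stays bounded below, given that $m(h)\ge c$. By the hypothesis $a>c\,e^{\int_0^Tb_0}$ and the intermediate value theorem, there is a unique $h_1$ with $g(h_1)=a$.

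\textbf{Main obstacle.} The delicate point is the limiting behaviour of $g$, in particular the bounds on $m(h)$ needed for dominated convergence as $h\to\pm\infty$. I would handle these using the a priori estimate $c\le m(h)\le \frac{k_1}{k_1+k_2}\,\mathbb E[(-(h\Gamma_T+k_0)/k_1)^+\vee c\mid\mathcal F^0_T]+\text{const}$, which follows from the fixed-point equation.
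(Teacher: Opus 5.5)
Your reduction is the same as the paper's proof. You write the adjoint explicitly as $Y=h_1\Gamma$ and obtain the projection formula $\xi=c\vee\big(-(h_1\Gamma_T+k_0+k_2m_T)/k_1\big)$. You fix $m_T$ for given $h_1$ by a contraction with constant $|k_2|/k_1$, get monotonicity of $m(h)$ by iteration, and conclude with continuity, limits of $g$ and the intermediate value theorem. Solving for $(X,q,z)$ by standard linear BSDE theory, instead of the paper's martingale-representation construction, is fine. Your ``main obstacle'' as $h\to+\infty$ needs no extra a priori bound. For $h\ge 1$, monotonicity of $m(h)$ gives $c\le \xi(h)\le c\vee\big(-(h\Gamma_T+k_0+k_2m(1))/k_1\big)\downarrow c$, dominated by $\xi(1)$.

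The step that fails is the claim in Step 3 that $\mathbb P(\xi(h)>c)>0$ for every $h$. From it you deduce that $g$ is strictly decreasing on all of $\mathbb R$ with $g>c\,\mathbb E[\Gamma_T]$, and this is false in general. Suppose $k_0+(k_1+k_2)c\ge 0$. Then for every $h\ge 0$ you have $h\Gamma_T+k_0+(k_1+k_2)c\ge 0$, so $T_h(c)=c$. Hence $m(h)=c$, $\xi(h)\equiv c$, and $g\equiv c\,\mathbb E[\Gamma_T]$ on $[0,\infty)$. Lognormality of $\Gamma_T$ does not help: $h\Gamma_T>0$ would have to fall below the threshold $-k_0-k_2m(h)-k_1c$, which is $\le 0$ here. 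If $c_0=\tilde c_0=0$, $\Gamma_T$ is even deterministic. This case actually occurs in the investment example of Section 4, e.g.\ for $\gamma=2$, $c=4.5$, where $k_0+(k_1+k_2)c=-1+\gamma(1-\theta)c>0$ and the computed $h_1$ is negative.

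The repair is the paper's argument: you only need strictness at a root. If $g(\hat h)=a>c\,\mathbb E[\Gamma_T]$, then, because $\xi(\hat h)\ge c$ and $\Gamma_T>0$, necessarily $\mathbb P(\xi(\hat h)>c)>0$. Your strict comparison then finishes the proof. For $h>\hat h$ it gives $g(h)<a$, since the inequality is strict on $\{\xi(\hat h)>c\}$. For $h<\hat h$ it gives $g(h)>a$, since it is strict on $\{\xi(h)>c\}\supseteq\{\xi(\hat h)>c\}$. With this change, existence (from your limits and continuity) and uniqueness both go through.
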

\begin{proof}
  The adjoint SDE can be solved for $t \in [0,T]$ explicitly as
\begin{equation}
\label{solution of forward CN l}
Y_t= h_1 e^{\int_0^t b_0(s)\, ds} \mathcal{E}_t\left(c_0 ,\tilde{c}_0 \right).
\end{equation} 
Moreover, since $b_0, c_0, \tilde{c}_0$ are bounded, the stochastic exponential $\mathcal E(c_0,\tilde c_0)$ and its reciprocal have finite moments of every order. By Doob's inequality,
\[ \mathbb E\left[\sup_{t\in [0,T]} |Y_t|^p\right] \leq C_p |h_1|^p \mathbb E\left[ \mathcal E_T (c_0,\tilde c_0)^p\right] < \infty,\]
for some constant $C_p >0$, so that $Y \in \mathcal S^p$ for all $p \geq  2$. In particular, $Y^{(1)}, 1/Y^{(1)}\in \mathcal S^p $ for all $p \geq  2$.

Defining the set $\Omega_0= \{ \omega \in \Omega \ | \ \xi(\omega)=c\}$, the condition 
\[\big ( Y_T+ \phi_x(\xi, \mu)\big)\big( \xi'-\xi \big) \geq 0, \text{a.s.}, \ \forall \xi' \in L^2([c, \infty))\] 
can be written as 
\begin{equation}
\label{eq1main}
    Y_T+ \phi_x(\xi, m_T)=0 \ \text{a.s. on} \ \Omega_0^c,
\end{equation}
\begin{equation}
\label{eq2main}
   Y_T+ \phi_x(c, m_T) \geq 0 \ \text{a.s. on} \ \Omega_0. 
\end{equation}
From  \eqref{eq1main} and \eqref{eq2main}  we obtain  
\begin{equation}
\label{optimality general}
    \xi=\left(\dfrac{-Y_T- \phi_x(c, m_T)}{k_1}\right)^+ +c.
\end{equation}
We prove existence and uniqueness by separating the problem into two steps. First, for a fixed \(h_1\), we solve for the conditional mean \(m_T\). Second, we determine \(h_1\) from the initial constraint 
\(X_0=a\).
Let us introduce the following convenient notation, stressing the dependence on $h_1$:
\[ Y_t ^{(h_1)} := Y_t = h_1 e^{\int_0^t b_0(s) ds} \mathcal{E}_t\left( c_0,\tilde{c}_0\right), \]
so that $Y_t ^{(h_1)} = h_1 Y_t ^{(1)}$ , where $Y_t ^{(1)}$  does not depend on $h_1$. Moreover, for any process $Z \neq 0$ a.s., and any time instants $s \leq t$, let $Z_{s,t}:= Z_t /Z_s$. Consequently, we clearly have
\begin{equation}\label{eq:factorY} Y^{(h_1)} _t = h_1 Y_t ^{(1)}=h_1 Y_s^{(1)} Y_{s,t}^{(1)}, \end{equation}
where $Y_{s,t}^{(1)}$ is independent of $\mathcal F_s$, so in particular of $Y_s ^{(1)}$.
Therefore, under Assumption (A7) the representation \eqref{optimality general} becomes
\begin{equation}
\label{xirep}
    \xi=c+\left(\frac{-Y_T^{(h_1)}-k_0-k_1c-k_2m_T}{k_1}\right)^+.
\end{equation}
Hence, for fixed \(h_1\), the conditional mean \(m_T\) must satisfy
\begin{equation}
\label{conditionalmean}
 m_T=c+\mathbb E\left[\left(\frac{-Y_T^{(h_1)}-k_0-k_1c-k_2m_T}{k_1}\right)^+\;\middle|\;\mathcal F_T^0\right].
\end{equation}
For fixed \(h_1\), define the map
\[
\Gamma_{h_1}:L^2(\mathcal F_T^0)\longrightarrow L^2(\mathcal F_T^0)
\]
by
\begin{equation}\label{def:Gamma}
\Gamma_{h_1}(m):=c+\mathbb E\left[\left(\frac{
-Y_T^{(h_1)}-k_0-k_1c-k_2m}{k_1}\right)^+\;\middle|\;\mathcal F_T^0\right].
\end{equation}
We show that \(\Gamma_{h_1}\) is a contraction. Let \(m,m'\in L^2(\mathcal F_T^0)\). Since the map \(x\mapsto x^+\) is \(1\)-Lipschitz, we have
\begin{equation}
\label{contractionestimate}
  \begin{aligned}
\left\| \Gamma_{h_1}(m)-\Gamma_{h_1}(m')\right\|_{L^2}
&\le\left\|\left(\frac{-Y_T^{(h_1)}-k_0-k_1c-k_2m}{k_1}\right)^+-\left(\frac{-Y_T^{(h_1)}-k_0-k_1c-k_2m'}{k_1}\right)^+\right\|_{L^2}\\
&\le \frac{|k_2|}{k_1} \|m-m'\|_{L^2}.
\end{aligned}  
\end{equation}
Under assumption (A7), $k_1>0$, $k_2\leq 0$ and $k_1+k_2>0$,
therefore $|k_2|<k_1,$ and so
$\frac{|k_2|}{k_1}<1$.
Thus, \(\Gamma_{h_1}\) is a contraction on \(L^2(\mathcal F_T^0)\). By the Banach fixed-point theorem, for every fixed \(h_1\), there exists a unique $m_T^{(h_1)}\in L^2(\mathcal F_T^0)$, such that
\[
m_T^{(h_1)}=\Gamma_{h_1}\big(m_T^{(h_1)}\big).
\]
Once \(m_T^{(h_1)}\) is determined, define
\begin{equation}
\label{xih1}
\xi^{(h_1)}:=c+\left(\frac{-Y_T^{(h_1)}-k_0-k_1c-k_2m_T^{(h_1)}}{k_1}\right)^+.
\end{equation}
In order to determine \(h_1\) from the initial condition \(X_0=a\), we observe first that, since $X_0=\mathbb E\big[\xi^{(h_1)}Y_T^{(1)}\big]$,
the parameter \(h_1\) must satisfy
$a=\mathbb E [\xi^{(h_1)}Y_T^{(1)}]$,
or, equivalently, it must be a zero of the function
\[
G(h_1):=\mathbb E\left[\xi^{(h_1)}Y_T^{(1)} \right]-a.
\]
\begin{claim}\label{claim:h1}
The equation \(G(h_1)=0\) admits a unique solution.
\end{claim}
\noindent The proof of Claim \ref{claim:h1} is rather technical and is postponed to  \ref{appendix}.
 For the unique solution \(h_1\), the corresponding fixed point
\(m_T^{(h_1)}\) is also uniquely determined, and so
\[
\xi=\xi^{(h_1)}=c+\left(\frac{
-Y_T^{(h_1)}-k_0-k_1c-k_2m_T^{(h_1)}
}{k_1}
\right)^+
\]
is uniquely determined.

It remains to construct processes \((X,q,z)\) solving the first equation in \eqref{FBSDE_Cont CN l} and to verify that \((X,q,z)\in \mathcal S^2\times\mathcal H^2_{1\times d}\times\mathcal H^2_{1\times d}\). We proceed directly from the terminal variable. The fixed-point argument for \(m_T^{(h_1)}\) works in \(L^p(\mathcal F_T^0)\), for every \(p\ge2\), with the same contraction constant \(|k_2|/k_1<1\). Since \(Y_T^{(1)}\) has moments of all orders, it follows that \(m_T^{(h_1)}\in L^p(\mathcal F_T^0)\), \(\xi^{(h_1)}\in L^p(\mathcal F_T)\), and \(\xi^{(h_1)}Y_T^{(1)}\in L^p(\mathcal F_T)\), for every \(p\ge2\). Define
\[
M_t:=\mathbb E\big[\xi^{(h_1)}Y_T^{(1)}\mid \mathcal F_t\big], \qquad t\in[0,T].
\]
By Doob's inequality, \(M\in\mathcal S^p\) for every \(p\ge2\). Set
\[
X_t:=\frac{M_t}{Y_t^{(1)}}, \qquad t\in[0,T].
\]
Since \(1/Y^{(1)}\in\mathcal S^p\) for every \(p\ge2\), H\"older's inequality gives \(X\in\mathcal S^p\) for every \(p\ge2\). Moreover, \(X_T=\xi^{(h_1)}\) and, by the definition of the zero of \(G\),
\[
X_0=\mathbb E\big[\xi^{(h_1)}Y_T^{(1)}\big]=a.
\]
By the martingale representation theorem, there exist predictable processes \((\widetilde q,\widetilde z)\) such that
\[
M_t=a+\int_0^t\widetilde q_s\,dW_s+\int_0^t\widetilde z_s\,dW_s^0, \qquad t\in[0,T].
\]
The Burkholder-Davis-Gundy and Doob inequalities yield \((\widetilde q,\widetilde z)\in\mathcal H^p_{1\times d}\times\mathcal H^p_{1\times d}\), for every \(p\ge2\). Applying It\^o's formula to \(X_t=M_t/Y_t^{(1)}\), and using
\[
dY_t^{(1)}=b_0(t)Y_t^{(1)}dt+c_0(t)Y_t^{(1)}dW_t+\widetilde c_0(t)Y_t^{(1)}dW_t^0,
\]
we obtain
\[
dX_t=\big(-b_0(t)X_t-q_tc_0(t)^\top-z_t\widetilde c_0(t)^\top\big)dt+q_t\,dW_t+z_t\,dW_t^0,
\]
where
\[
q_t=\frac{\widetilde q_t}{Y_t^{(1)}}-X_tc_0(t),\qquad
z_t=\frac{\widetilde z_t}{Y_t^{(1)}}-X_t\widetilde c_0(t).
\]
Since \(c_0\) and \(\widetilde c_0\) are bounded, while \(X\in\mathcal S^p\), \(1/Y^{(1)}\in\mathcal S^p\), and \((\widetilde q,\widetilde z)\in\mathcal H^p\times\mathcal H^p\) for sufficiently large \(p\), H\"older's inequality gives \(q,z\in\mathcal H^2_{1\times d}\). Thus \((X,q,z)\) solves the first equation in \eqref{FBSDE_Cont CN l} and belongs to the required space.
\end{proof}

\begin{rem}
\label{remExun}
Similar results as in Theorem \ref{EXUN} (and its proof) hold also in the two ``extreme'' cases with no common noise and with only common noise with the corresponding modifications. They will be used in the applications of the next section.
\end{rem}

\section{Application to optimal investment for risk-neutral agents with quadratic relative performance criterion}\label{sec:appl}

We consider a MFG of optimal investment for risk-neutral investors with quadratic relative performance criterion and a terminal wealth constraint. Specifically, the investors require their terminal wealth to stay above a certain fixed constant threshold. This variant of the optimal investment problem received a lot of attention in the past mathematical finance literature; we refer especially to the papers by Cox and Huang \cite{Cox Huang}, El Karoui et al. \cite{ElK-J-L}, Korn \cite{Korn} and Tepl\'a \cite{Tepla}, to cite just a few. The idea is that the representative investor fears big losses, hence she incorporates the tolerated minimum wealth in her criterion. For the sake of simplicity, here we assume that the minimum level is constant and applies only at maturity. Moreover, in order to match our general setting, we consider investors who are willing to maximize their expected terminal wealth and, at the same time, compare their performance to their competitors. The latter is described by some quadratic relative performance criterion appearing in their objective.  
 Optimal investment problems under relative performance criteria have been studied rather extensively in the literature, i.e., by Espinosa, Touzi in \cite{Espinosa Touzi} and by Lacker, Soret, Zariphopoulou in  \cite{Lacker S}, \cite{Lacker Z} in the MFG setting.\medskip
 
We start by describing the $N$-player optimal investment game. Then, we will pass to MFG for $N \to \infty$, which we will set up in a more rigorous way. We assume that the interest rate is zero, so that the riskless asset is identically equal to one, and that each agent $i = 1,\ldots, N$ invests in a risky asset, whose price at time $t$ is denoted $S_t ^i$ and follows the dynamics
\[ \frac{dS^i _t}{S_t ^i} = bdt + \sigma dW^i + \sigma^0 dW_t ^0, \quad t\in [0,T], \]
where $W^0, W^1, \ldots, W^{N}$ are $N+1$ independent Brownian motions, $b\in \mathbb R$ and both $\sigma, \sigma^0$ are nonnegative constants with $\sigma + \sigma^0 >0$. The information of the agents is modelled by the filtration generated by all these Brownian motions and satisfying the usual conditions.
Each agent $i$ trades using a self-financing strategy $(\alpha_t^i)_{t \in [0,T]}$, which represents the proportion of wealth invested in the risky asset. Therefore, agent $i$'s wealth, denoted $X_t ^i$, evolves as
$$dX_t^i=\alpha_t^i X_t^i(bdt+\sigma dW_t^i + \sigma^0 dW_t ^0) , \ X_0^i=a,$$
where $a > 0 $ is the initial wealth. 

Each agent aims at maximizing her expected terminal wealth while considering her performance relative to the average terminal wealth of all investors. Hence, given the terminal wealth of all agent $i$'s competitors  $(X_T ^j)_{j \neq i}$, her optimization problem is
\[ \sup_{\alpha^i} \mathbb E\left[ X_T^i -\frac{\gamma}{2}\left(X_T^i - \dfrac{\theta}{N-1}\sum_{j\neq i}^N X_T^j \right)^2  \right], \]
under the constraint $X_T^i \geq c$, for a given threshold $0 < c < a$; where $\gamma>0$ is the agent's risk aversion coefficient and $\theta \in [0,1)$ models her relative performance concern.

Our goal is to solve the corresponding MFG using the methods developed in the previous sections with $\mathcal D = [c,+\infty)$. Therefore, we will consider only the two cases $\sigma^0=0$ (no common noise) and $\sigma =0$ (pure common noise). The first case corresponds to a situation where each investor is specialized in a specific asset, while in the second one all agents are investing in the same asset. In both cases below, $(\mathcal F_t)$ will denote the filtration generated by two independent real-valued Brownian motions $W^0,W$ defined in a common complete probability space $(\Omega, \mathcal F, \mathbb P)$. The filtration is assumed to be completed with all $\mathbb P$-null sets, so that the usual conditions are fulfilled. 
 In the intermediate situation where both 
$\sigma^0$ and $\sigma$ are not zero, the method developed in the paper can be used if we allow for different controls in the noise terms, which is not a natural assumption in this application. 

\paragraph{The case with no common noise.} In this case, i.e., $\sigma^0 =0$, the corresponding MFG is given by the following two-step procedure:
\begin{enumerate}

\item Fix a constant $ m_T \in \mathbb R$ and solve the following maximization problem
\[ \sup_{\alpha \in \mathcal A} \mathbb E\left[ X_T -\frac{\gamma}{2}(X_T - \theta m_T)^2  \right], \]
subject to
\begin{equation}
\label{sde oi}
    dX_t = \alpha_t X_t (b dt + \sigma dW_t ), \quad X_0=a, \quad X_T \geq c,
\end{equation}

where $\mathcal A$ is the set of all admissible investment strategies, i.e., all $(\mathcal F_t)$-progressively measurable processes $\alpha$ such that $\mathbb E\left[\int_0 ^T |\alpha|^2 dt \right] < + \infty$.

\item Find a fixed point $\hat m_T = \mathbb E[\hat X_T]$, where $\hat X$ is the state associated to the optimal control $\hat \alpha$ obtained in 1.
\end{enumerate}
Before moving to the resolution of the MFG, we need to fix some preliminary notation. Let $\rho := b/\sigma$ denote the market price of risk, let $Y^{(1)} := \mathcal E(-\rho,0)$.
\begin{prop}\label{prop:application1}
Let $(\hat h_1, \hat m_T) \in \mathbb R \times \mathbb R$ be the unique solution of the coupled system 
\begin{equation} \label{eq:h1}
\begin{cases}
 m_T = c + \mathbb{E} \left[ \left( \frac{1}{\gamma}-c+\theta m_T - \frac{h_1}{\gamma} Y_T^{(1)} \right)^{+} \right]\\
    G(h_1):= c -a+ \mathbb{E} \left[ Y_T^{(1)} \left( \frac{1}{\gamma}-c+\theta m_T - \frac{h_1}{\gamma} Y_T^{(1)} \right)^{+} \right]=0,\\
\end{cases}
\end{equation} 
 and let
\[ \hat g(t,y) := \mathbb{E}\!\left[ yY^{(1)}_{t,T} \left(\frac{1}{\gamma}-c+\theta \hat{m}_T- \frac{\hat h_1 }{\gamma}yY^{(1)}_{t,T} \right)^+ \right], \quad (t,y)\in [0,T]\times \mathbb R^+.\]
 The unique MFG solution is given by the couple $(\hat \alpha, \hat m_T)$ where
$$\hat\alpha_t=-\frac{\rho}{\sigma}
\dfrac{Y^{(1)}_t\partial_{y}\hat g(t, Y^{(1)}_t) - \hat g(t,Y^{(1)}_t)}{cY^{(1)}_t+\hat g(t, Y^{(1)}_t)}, \quad t\in [0,T).$$
\end{prop}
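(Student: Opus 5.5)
Our plan is to reduce the investment problem to the auxiliary framework of Section 2 and then apply Theorem \ref{verification} (with the one-dimensional linear structure of Theorem \ref{EXUN}, in its no-common-noise version from Remark \ref{remExun}).

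\textbf{Step 1: reformulation.} Since wealth is positive we would use the amount invested, $\pi_t=\alpha_t X_t$, as control. With $q_t=\sigma\pi_t$ the state equation becomes
\[ -dX_t=-\rho q_t\,dt-q_t\,dW_t, \]
i.e. $f(t,x,q)=-\rho q$, which is linear as required by (A6) with $b_0=0$ and $c_0=-\rho$. We take $\ell=0$ and the cost $\phi(x,m)=-x+\frac{\gamma}{2}(x-\theta m)^2$, so that
\[ \phi_x=-1+\gamma x-\gamma\theta m, \]
giving $k_0=-1$, $k_1=\gamma>0$, $k_2=-\gamma\theta\le 0$ and $k_1+k_2=\gamma(1-\theta)>0$; hence (A7) holds. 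The threshold condition $a>c\,e^{\int_0^T b_0}=c$ holds because $c<a$. The adjoint process is $Y=h_1 Y^{(1)}$ with $Y^{(1)}=\mathcal E(-\rho,0)$, which is the state-price density. The Hamiltonian $yf$ is linear and $\phi$ is convex, so Theorem \ref{verification} (sufficiency, with $h_0=1$) applies.

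\textbf{Step 2: the MFG system.} Formula \eqref{xirep} gives
\[ \hat\xi=c+\Big(\tfrac1\gamma-c+\theta m_T-\tfrac{h_1}{\gamma}Y_T^{(1)}\Big)^+. \]
Imposing consistency $m_T=\mathbb E[\hat\xi]$ yields the first equation of \eqref{eq:h1}. The budget constraint $X_0=\mathbb E[\hat\xi Y_T^{(1)}]=a$, together with $\mathbb E[Y^{(1)}_T]=1$, yields $G(h_1)=0$. Existence and uniqueness of $(\hat h_1,\hat m_T)$ follow from the contraction in $m$ and Claim \ref{claim:h1}, exactly as in the proof of Theorem \ref{EXUN}. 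Uniqueness of the MFG solution then follows from the uniqueness in Theorem \ref{EXUN}, combined with the necessity part of Theorem \ref{verification} and Proposition \ref{uniquexi} (via Remark \ref{uniq-appl}). We must also check that $h_0>0$ in the necessity part. Our argument is that if $h_0=0$, then \eqref{main condition CN} forces $h_1Y^{(1)}_T(\xi-\hat\xi)\ge0$ for all $\xi\in L^2([c,\infty))$. Since $Y^{(1)}_T>0$, this forces $h_1=0$ or $\hat\xi\equiv c$. The first is excluded because $|h_0|+|h_1|\ne0$, and the second is excluded because $c<a$ is incompatible with $X_0=a$.

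\textbf{Step 3: the optimal strategy.} We would use the construction at the end of the proof of Theorem \ref{EXUN}:
\[ X_t=\frac{\mathbb E[\hat\xi Y^{(1)}_T\mid\mathcal F_t]}{Y^{(1)}_t}. \]
Writing $Y^{(1)}_T=Y^{(1)}_tY^{(1)}_{t,T}$, with $Y^{(1)}_{t,T}$ independent of $\mathcal F_t$, and using $\mathbb E[Y^{(1)}_{t,T}]=1$, we get $X_t Y^{(1)}_t=cY^{(1)}_t+\hat g(t,Y^{(1)}_t)$. Applying Itô to $F(t,Y^{(1)}_t)$ with $F(t,y)=cy+\hat g(t,y)$, the martingale part is $\partial_yF\cdot(-\rho Y^{(1)}_t)\,dW_t$. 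The $q$-formula from that proof, $q_t=\tilde q_t/Y^{(1)}_t-X_tc_0$ with $c_0=-\rho$, then gives
\[ q_t=-\rho\big(c+\partial_y\hat g\big)+\rho\,\frac{cY^{(1)}_t+\hat g}{Y^{(1)}_t}=-\rho\,\frac{Y^{(1)}_t\partial_y\hat g-\hat g}{Y^{(1)}_t}. \]
Dividing by $\sigma X_t=\sigma\,(cY^{(1)}_t+\hat g)/Y^{(1)}_t$ yields the stated $\hat\alpha_t$.

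\textbf{Main obstacle.} The hardest step is justifying the regularity of $\hat g$. We need $\hat g\in C^{1,2}$ on $[0,T)\times\mathbb R^+$ so that Itô applies and $\partial_y\hat g$ exists. The plan is to use that $\hat g$ is an expectation of a Lipschitz function against the lognormal density of $Y^{(1)}_{t,T}$, which is smooth for $t<T$. We would also prove admissibility of $\hat\alpha$, namely $\mathbb E\int_0^T|\hat\alpha_t|^2dt<\infty$, via the moment bounds $q\in\mathcal H^2$ and $X\ge c>0$.
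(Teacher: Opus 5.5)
Your proposal is correct. Steps 1 and 3 coincide with the paper's proof: the same change of control $q=\sigma\alpha X$, the same parameters $c_0=-\rho$, $k_0=-1$, $k_1=\gamma$, $k_2=-\gamma\theta$, the same representation $X_t=c+\hat g(t,Y^{(1)}_t)/Y^{(1)}_t$, and the same identification of $q$.

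Your uniqueness argument, however, takes a genuinely different route.

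\textbf{The paper's route.} The paper never invokes the necessary conditions for uniqueness. It fixes $m$ and takes the unique best response $\xi^m$ (Proposition~\ref{uniquexi} together with sufficiency). It then uses the $\gamma/2$-convexity of $J_m$ on the convex set $L^2_a([c,+\infty))$ to show that $m\mapsto\mathbb E[\xi^m]$ is a contraction with constant $\theta/(1-\varepsilon)<1$.

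\textbf{Your route.} You push every equilibrium through the necessity half of Theorem~\ref{Necessary Sufficient Condition CN} and rule out the abnormal multiplier $h_0=0$. Your argument there is right: $Y^{(1)}_T>0$ forces $\hat\xi\equiv c$, and then $X_0=\mathbb E[cY^{(1)}_T]=c<a$. This fills a point that the converse in Theorem~\ref{verification} leaves open, since that converse only gives $h_0\ge0$. You then normalize to $h_0=1$ and invoke the uniqueness part of Theorem~\ref{EXUN}: the projection formula, the contraction $\Gamma_{h_1}$, and the strict monotonicity of $G$ from Claim~\ref{claim:h1}.

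\textbf{Comparison.} Your route reuses machinery already built and needs no new estimate. It does, however, rest on the Ekeland-based necessity theorem. In your route, Proposition~\ref{uniquexi} is in fact superfluous. The paper's route is elementary and independent of the stochastic maximum principle. It only uses that equilibrium best responses minimize a $\lambda$-convex functional over a convex set, and it yields an explicit contraction constant.

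\textbf{Two small remarks.}
\begin{itemize}
\item The payoff $w\mapsto w\bigl(\tfrac1\gamma-c+\theta\hat m_T-\tfrac{\hat h_1}{\gamma}w\bigr)^+$ is not Lipschitz when $\hat h_1<0$, which does occur for some parameter values (cf.\ Table~\ref{tab:convergence_results}). It has only quadratic growth, but that still suffices for the Gaussian smoothing in $\log y$ that gives $\hat g\in C^{1,2}$.
\item Your admissibility bound $|\hat\alpha_t|\le |q_t|/(\sigma c)$, which follows from $X_t\ge c$, is a useful addition. The paper only checks $X_t>0$.
\end{itemize}
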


\begin{proof}
First, we reformulate the control problem in terms of the amount $\pi_t$ invested
in the risky asset at time $t$, i.e., by letting
\[
\pi_t:=\alpha_tX_t, \quad t\in [0,T],
\]
the wealth dynamics \eqref{sde oi} becomes
\[
dX_t=\pi_t(b\,dt+\sigma\,dW_t),\qquad X_0=a.
\]
In this reformulated problem the diffusion coefficient \(\pi \sigma\) satisfies assumption (A4), so that we can use the approach developed in the previous sections and set
\[
q_t:=\sigma\pi_t,
\qquad \rho:=\frac b\sigma,
\]
yielding
\[
dX_t=q_t(\rho\,dt+dW_t),\qquad X_0=a.
\]
After solving the reformulated problem, the original control will be recovered via
\[
\alpha_t=\frac{\pi_t}{X_t}=\frac{q_t}{\sigma X_t},
\]
whenever \(X_t>0\), which we will verify to be true at the equilibrium. Using Theorem \ref{verification}, we are reduced to solving the following FBSDE system:
\begin{equation}
\label{FBSDE general 1 oi}
\begin{cases}
    dX_t=  q_t (\rho dt+ dW_t)\\
    X_0=a, \quad X_T=\xi\\
     dY_t=-\rho Y_t dW_t\\
    Y_0=h_1\\

        \big(Y_T-1+\gamma(\xi -\theta m_T)\big)\big( \xi'-\xi \big) \geq 0 \ \text{a.s.}, \ \forall \xi' \in L^2([c,+\infty)) .
\end{cases}
\end{equation}
Applying the same computations as in the first part of Theorem \ref{EXUN} (see also Remark \ref{remExun}) with the parameters
\[
c_0(t) = -\rho, \quad b_0(t) = 0, \quad \tilde{c}_0(t) = 0, \quad k_0 = -1, \quad k_1 = \gamma, \quad k_2 =-\gamma\theta,
\]
we have that the terminal condition of system \eqref{FBSDE general 1 oi} takes the form 
\begin{equation}
\label{terminal_X}
\xi=c + \left( \frac{1}{\gamma} - c + \theta m_T - \frac{h_1}{\gamma} Y_T^{(1)} \right)^{+}.
\end{equation}
By the same arguments as in the proof of Theorem \ref{EXUN}, we deduce that there exists a unique  pair $(\hat h_1, \hat m_T)$  which solves the following  coupled system
\begin{align*}
m_T &= c + \mathbb{E} \left[ \left( \frac{1}{\gamma} - c + \theta m_T - \frac{h_1}{\gamma} Y_T^{(1)} \right)^{+} \right], \\
a &= c + \mathbb{E} \left[ Y_T^{(1)} \left( \frac{1}{\gamma} - c + \theta m_T - \frac{h_1}{\gamma} Y_T^{(1)} \right)^{+} \right].
\end{align*}
Since \(X Y^{(1)}\) is a martingale, we have
\[
X_t = \frac{1}{Y_t^{(1)}}
\mathbb{E}\left[ X_T Y_T^{(1)} \mid \mathcal{F}_t \right].
\]
Using the terminal condition
\[X_T=c+\left(\frac{1}{\gamma} - c + \theta m_T-\frac{h_1}{\gamma} Y_T^{(1)}\right)^+,
\]
we obtain
\[
X_t=c+\frac{1}{Y_t^{(1)}}\mathbb{E}\left[Y_T^{(1)}
\left(\frac{1}{\gamma} - c + \theta m_T-\frac{h_1}{\gamma} Y_T^{(1)}\right)^+\;\middle|\;\mathcal{F}_t\right].\]
We notice that, since $Y^{(1)}_T = Y^{(1)}_t Y^{(1)}_{t,T}$, where $Y^{(1)}_t$ and $Y^{(1)}_{t,T}$ are independent, we can express $X$ as follows
\[
X_t = c + \frac{1}{Y^{(1)}_t} \hat g(t, Y^{(1)}_t),
\]
where
\[
\hat g(t,y) := \mathbb{E}\!\left[ yY^{(1)}_{t,T} \left(\frac{1}{\gamma} - c + \theta \hat m_T- \frac{\hat h_1 }{\gamma}yY^{(1)}_{t,T} \right)^+ \right].\]
Observe that $Y^{(1)}_{t,T}$ follows a lognormal distribution with a smooth density on $(0,\infty)$. Hence, $\hat g(t,\cdot)$ is a lognormal convolution of a payoff with at most quadratic growth, so $\hat g \in C^{1,2}([0,T) \times (0, \infty))$. Since $Y^{(1)} > 0$, we can safely apply It\^o's formula to $X_t = c+  \hat g(t,Y^{(1)}_t)/Y^{(1)}_t$, $t<T$, yielding
\[
dX_t = \left(\partial_t \kappa (t,Y^{(1)}_t) + \frac12 \rho^2 (Y_t ^{(1)})^2 \, \partial_{yy} \kappa (t,Y^{(1)}_t) \right)dt - \rho Y^{(1)}_t \, \partial_y \kappa (t,Y^{(1)}_t) dW_t, \quad t<T,
\]
where $\kappa (t,y) :=y^{-1}\hat g(t,y)$. Therefore, we can identify the process $q$ as
\[
q_t=-\rho Y_t^{(1)} \partial_y \kappa (t,Y^{(1)}_t)=-\rho
\left(
\partial_{y}\hat g(t,Y^{(1)}_t) - \frac{1}{Y^{(1)}_t} \hat g(t,Y^{(1)}_t)\right),\quad   t<T.
\]
Finally, since
\[
X_t=c+\frac{\hat g(t,Y_t^{(1)})}{Y_t^{(1)}}>0,
\]
\(\hat\alpha_t\) is well defined, and recalling that
\(q_t=\alpha_tX_t\sigma\), we obtain the equilibrium strategy
\begin{equation}
\label{opt-alpha}
\hat \alpha_t= -\frac{\rho}{\sigma}
\dfrac{Y^{(1)}_t\partial_{y}\hat g(t,Y^{(1)}_t) - \hat g(t,Y^{(1)}_t)}{cY^{(1)}_t+\hat g(t,Y^{(1)}_t)}, \quad t\in [0,T).
\end{equation}
It remains to prove uniqueness. For notational convenience, we write $m$ instead of $m_T$.	
Fix $m\in \mathbb R$. By Proposition \ref{uniquexi} and Remark \ref{uniq-appl}, $\xi^m\in L^2_a([c, +\infty))$ given by \eqref{terminal_X} for the fixed $m$, is  the unique minimizer of 
\[
J_m(\xi) := \mathbb E\left[-\xi+\frac{\gamma}{2}(\xi-\theta m)^2\right]. 
\]
As observed in  Remark~\ref{Lasry-appl}, the terminal cost functional is not monotone in the sense of Lasry-Lions. Therefore, we establish uniqueness via the following contraction argument.
Define
\[
\Gamma(m):=\mathbb E[\xi^m].
\]
We show that \(\Gamma\) is a contraction.
Let \(m_1,m_2\in\mathbb R\), and set
$\xi_1:=\xi^{m_1}, \xi_2:=\xi^{m_2}$.
Since \(L^2_a([c, +\infty))\) is convex, for every \(\varepsilon\in(0,1)\),
\[
\xi_\varepsilon:=(1-\varepsilon)\xi_1+\varepsilon\xi_2\in L^2_a([c, +\infty)).
\]
Moreover, since \(\xi_1\) minimizes \(J_{m_1}\) over \(L^2_a([c, +\infty))\),
\[
J_{m_1}(\xi_1)\le J_{m_1}(\xi_\varepsilon).
\]
On the other hand, by the \(\lambda\)-convexity of \(J_{m_1}\),
\[
J_{m_1}(\xi_\varepsilon)\le(1-\varepsilon)J_{m_1}(\xi_1)+\varepsilon J_{m_1}(\xi_2)-\lambda\varepsilon(1-\varepsilon)\mathbb E\left[|\xi_2-\xi_1|^2 \right].
\]
Combining these last two inequalities gives
\[
J_{m_1}(\xi_1)\le(1-\varepsilon)J_{m_1}(\xi_1)+\varepsilon J_{m_1}(\xi_2)-\lambda\varepsilon(1-\varepsilon)\mathbb E\left[|\xi_2-\xi_1|^2 \right].
\]
Rearranging and dividing by \(\varepsilon>0\), we obtain
\[
J_{m_1}(\xi_2)-J_{m_1}(\xi_1)\ge \lambda(1-\varepsilon)\mathbb E\left[|\xi_2-\xi_1|^2 \right].
\]
Repeating the same argument with \(m_2\), minimizer \(\xi_2\), and competitor \(\xi_1\), yields
\[
J_{m_2}(\xi_1)-J_{m_2}(\xi_2)\ge\lambda (1-\varepsilon)\mathbb E\left[|\xi_2-\xi_1|^2 \right].
\]
Adding the last two inequalities gives
\begin{equation}\label{ineq-m1m2}
2\lambda (1-\varepsilon)\mathbb E \left[|\xi_2-\xi_1|^2 \right]\le
J_{m_1}(\xi_2)-J_{m_1}(\xi_1)+J_{m_2}(\xi_1)-J_{m_2}(\xi_2).
\end{equation}
Since
\[
J_m(\xi)=\mathbb E\left[-\xi+\frac{\gamma}{2}(\xi-\theta m)^2\right],
\]
the right-hand side in \eqref{ineq-m1m2} satisfies
\[
\begin{aligned}
&J_{m_1}(\xi_2)-J_{m_1}(\xi_1)+J_{m_2}(\xi_1)-J_{m_2}(\xi_2) \\
&=\frac{\gamma}{2}\mathbb E\Big[(\xi_2-\theta m_1)^2-(\xi_1-\theta m_1)^2+(\xi_1-\theta m_2)^2-(\xi_2-\theta m_2)^2\Big]\\
&=\gamma\theta(m_1-m_2)\mathbb E[\xi_1-\xi_2].
\end{aligned}
\]
Therefore,
\begin{align*}
2\lambda (1-\varepsilon)\mathbb E \left[|\xi_2-\xi_1|^2 \right] & \le \gamma\theta(m_1-m_2)\mathbb E[\xi_1-\xi_2]\\
& \le \gamma\theta |m_1-m_2|\left|\mathbb E[\xi_1-\xi_2]\right|.
\end{align*}
By Jensen's inequality,
$\left|\mathbb E[\xi_1-\xi_2]\right|^2 \le \mathbb E\left[|\xi_1-\xi_2|^2\right]$.
Hence, if \(\mathbb E[\xi_1-\xi_2]\neq0\), we obtain
\[
\left|\mathbb E[\xi_1-\xi_2]\right| \le\frac{\gamma\theta}{2\lambda(1-\varepsilon)}|m_1-m_2|.
\]
If \(\mathbb E[\xi_1-\xi_2]=0\), the same inequality trivially holds too. Thus
\[
|\Gamma(m_1)-\Gamma(m_2)|\le\frac{\gamma\theta}{2\lambda(1-\varepsilon)}|m_1-m_2|.
\]
By Remark \ref{uniq-appl}  for the present quadratic cost we may take \(\lambda=\gamma/2\). Therefore,
\[
|\Gamma(m_1)-\Gamma(m_2)| \le \frac{\theta}{1-\varepsilon} |m_1-m_2|.
\]
Since \(\theta\in[0,1)\), we may choose \(\varepsilon>0\) sufficiently small such that $\frac{\theta}{1-\varepsilon}<1$, yielding that \(\Gamma\) is a contraction on \(\mathbb R\). Therefore the consistency condition
$m=\mathbb E[\xi^m]$
has at most one solution. Finally, by Proposition \ref{equivalence} we have  uniqueness of the MFG equilibrium $(\hat \alpha, \hat m_T)$.
\end{proof}

\begin{rem}
   As a baseline for comparison, we consider the analytical solution to the unconstrained MFG, i.e., without the terminal constraint $X_T \geq c$.  The solution is obtained by applying the stochastic maximum principle and characterizing the optimal control through the adjoint processes $(Y, Z)$. By solving the resulting coupled system of Forward-Backward SDEs (FBSDEs), we obtain the following MFG solution: 
   \[
m_T = \frac{ \gamma a-1\   + \  \exp(\rho^2T)  }{ \gamma \left( (1 - \theta)\exp(\rho^2T) +\theta  \right) },
\]
and 
\[
\alpha_t=-\dfrac{\rho}{\sigma}\dfrac{(a\gamma(1-\theta)-1)\psi(t)}{(a\gamma(1-\theta)-1)\psi(t)+\exp(\rho^2 T)+a\gamma \theta},
\]
where $$\psi(t):=\exp \left(\rho^2(T-t) -\dfrac{1}{2}\rho^2 t - \rho  W_t\right).$$
The corresponding  wealth process ${X}_t$ is
\begin{equation*}
  X_t=\dfrac{(a\gamma(1-\theta)-1)\psi(t)+\exp(\rho^2T)+a\gamma\theta}{\gamma((1-\theta)\exp(\rho^2T )+\theta)}.
\end{equation*}

\end{rem}

\paragraph{Numerics.}
We discretize the time interval $[0,T]$ into $N=1000$ steps, with $T=1$ the terminal time, and simulate the stochastic process $Y_t^{(1)}$ using standard Brownian increments with volatility $\sigma=0.2$ and drift parameter $b=0.1$. For the unconstrained model, the optimal wealth process and risky allocation are computed analytically from their explicit formulas.

In the constrained mean field game setting, the unknown pair $(h_1,m_T)$ is determined numerically by solving the coupled nonlinear system \eqref{eq:h1}. This is done using the nonlinear least-squares routine \texttt{least\_squares} from SciPy, initialized from several starting guesses in order to improve robustness. The expectations in \eqref{eq:h1} are approximated by Monte Carlo simulations with $50\,000$ samples of the terminal random variable $Y_T^{(1)}$. Once $(h_1,m_T)$ has been computed, the constrained wealth paths and optimal allocations are evaluated along the simulated trajectories of $Y_t^{(1)}$, where the $\hat{g}, \partial_y \hat{g} $ are estimated through conditional Monte Carlo expectations.

The simulations consider $\gamma\in\{0.5,2\}$, $\theta\in\{0.25,0.75\}$, $a=5$, and $c=4.5$. Table \ref{tab:convergence_results} reports the resulting numerical values of $h_1$ and $m_T$ for the different parameter configurations. In all tested scenarios, the numerical procedure exhibits high stability, with residual errors of order $10^{-14}$ or smaller.

\begin{table}[H]
\caption{Numerical solution of the coupled system \eqref{eq:h1} using a nonlinear least-squares method, together with the achieved residual errors}
\label{tab:convergence_results}
\centering
\begin{tabular}{ccccc}
\toprule
$\gamma$ & $\theta$ & $h_1$ & $m_T$ & Residual \\
\midrule
0.5 & 0.25 & $-0.65$ & $4.75$ & $1.49 \times 10^{-16}$ \\
0.5 & 0.75 & $0.37$  & $5.16$ & $1.02 \times 10^{-15}$ \\
2.0 & 0.25 & $-4.23$ & $4.71$ & $2.29 \times 10^{-16}$ \\
2.0 & 0.75 & $-1.37$ & $4.81$ & $1.24 \times 10^{-14}$ \\
\bottomrule
\end{tabular}
\end{table}

A first clear effect of the terminal constraint (see Figure \ref{fig:alphapaths}) appears in the behavior of the optimal investment strategy near maturity. In the unconstrained case, the strategy $\alpha_t$ remains relatively stationary, as the agent maintains a consistent risk-reward profile throughout the entire horizon. By contrast, in the constrained case ($\hat{\alpha}_t$), the risky allocation decreases more visibly as $t$ approaches $T$, reflecting the need to control downside risk in order to satisfy the terminal condition $X_T \geq c$. This behavior shows that the terminal constraint becomes increasingly relevant near maturity and induces a more cautious investment policy. 
\begin{figure}[H]
    \centering
\includegraphics[width=0.8\textwidth]{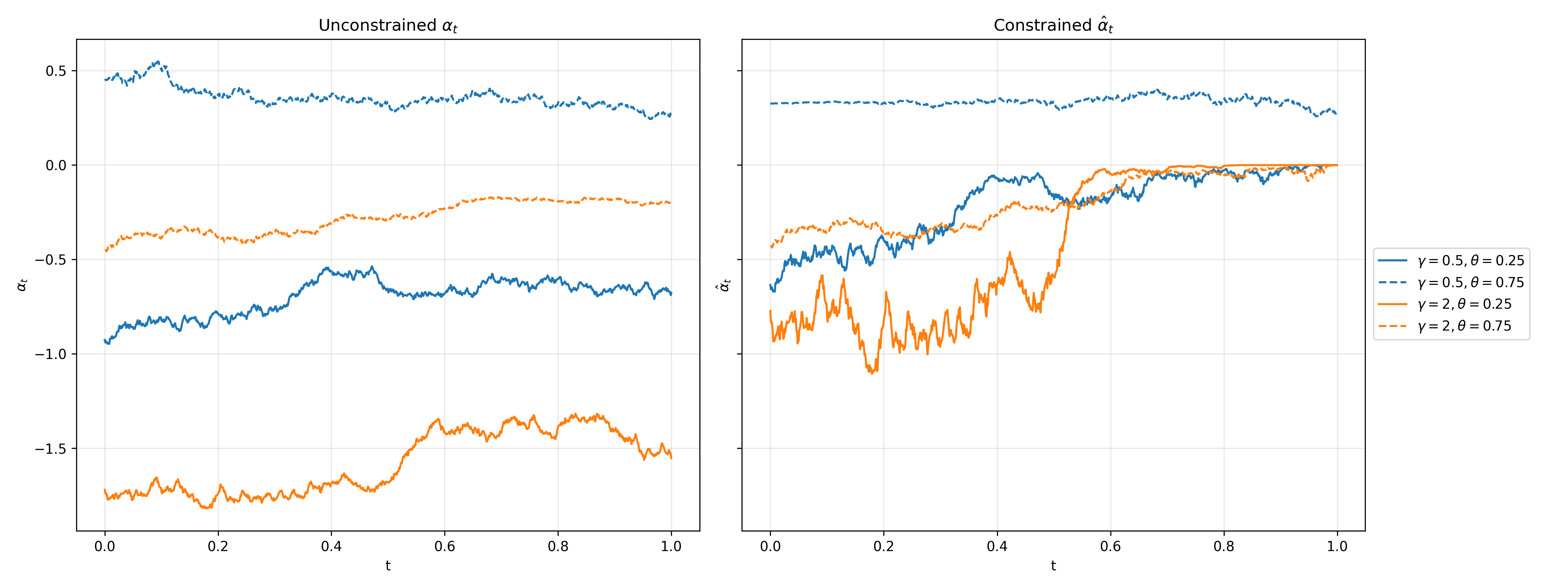}
    \caption{Comparison of optimal investment strategies in the constrained ($\hat{\alpha}_t$) and unconstrained ($\alpha_t$) MFG scenarios. Parameters: $b = 0.1$, $\sigma = 0.2$, $T = 1$, $a = 5$, $c = 4.5$, $\theta \in \{0.25, 0.75\}$, and $\gamma \in \{0.5, 2\}$. }
    \label{fig:alphapaths}
\end{figure}
 
The comparison between the unconstrained and constrained wealth processes (see Figure \ref{fig:Xpaths})  illustrates clearly the impact of the lower bound $c=4.5$. In the unconstrained model, the trajectories evolve freely,  frequently crossing the $4.5$ threshold and diverging downward. In the constrained case, the paths remain systematically above $c$.
This behavior can be explained as follows: since the terminal claim $X_T \geq c$ is non-negative relative to the floor, its value process $\hat{X}_t - c$ --- interpreted as a conditional price --- must remain non-negative for the entire horizon. 

\begin{figure}[H]
    \centering
\includegraphics[width=0.8\textwidth]{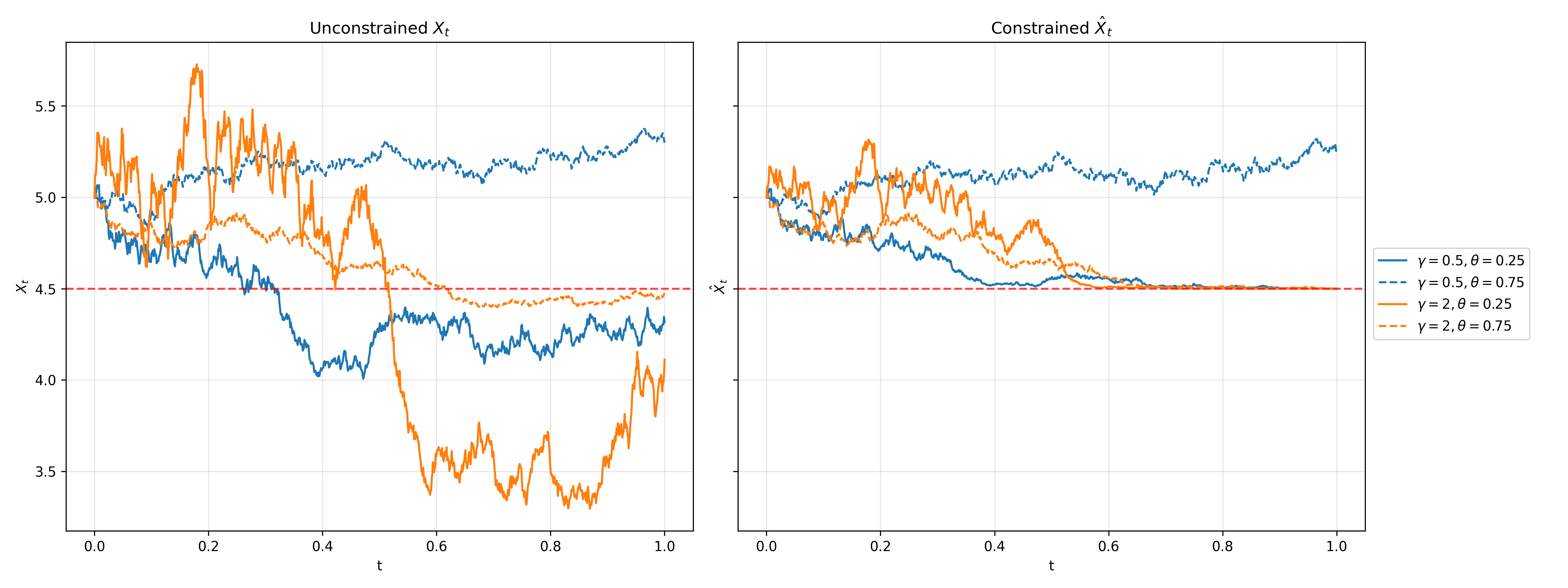}
    \caption{Comparison of optimal wealth paths in the constrained ($\hat{X}_t$) and unconstrained ($X_t$) MFG scenarios. The parameters are $b = 0.1$, $\sigma = 0.2$, $T = 1$, $a = 5$, $c = 4.5$, with variations in $\theta \in \{0.25, 0.75\}$ and $\gamma \in \{0.5, 2\}$.}
    \label{fig:Xpaths}
\end{figure}

For a fixed level of risk aversion $\gamma$, an increase in the relative performance parameter $\theta$ leads to a higher optimal investment $\hat{\alpha}_t$ (see Figure \ref{fig:theta_constrained_results}). This reflects the fact that stronger concerns about relative performance induce agents to adopt more aggressive investment strategies in order to outperform the average terminal wealth. 

For a fixed $\theta$, a decrease in $\gamma$ results in an increase in $\hat{\alpha}_t$ (see Figure \ref{fig:gamma_constrained_results}). Since $\gamma$ determines the weight assigned to the quadratic penalty on deviations from the benchmark, a smaller $\gamma$ corresponds to lower effective risk aversion. Consequently, agents are more willing to tolerate variability in terminal wealth and therefore allocate a larger fraction of their wealth to the risky asset.

Overall, these results indicate that agents who are either more competitive or less risk-averse optimally choose a higher exposure to the risky asset. 
\begin{figure}[H]
    \centering
    \begin{minipage}[t]{0.48\textwidth}
        \centering
        \includegraphics[width=\textwidth]{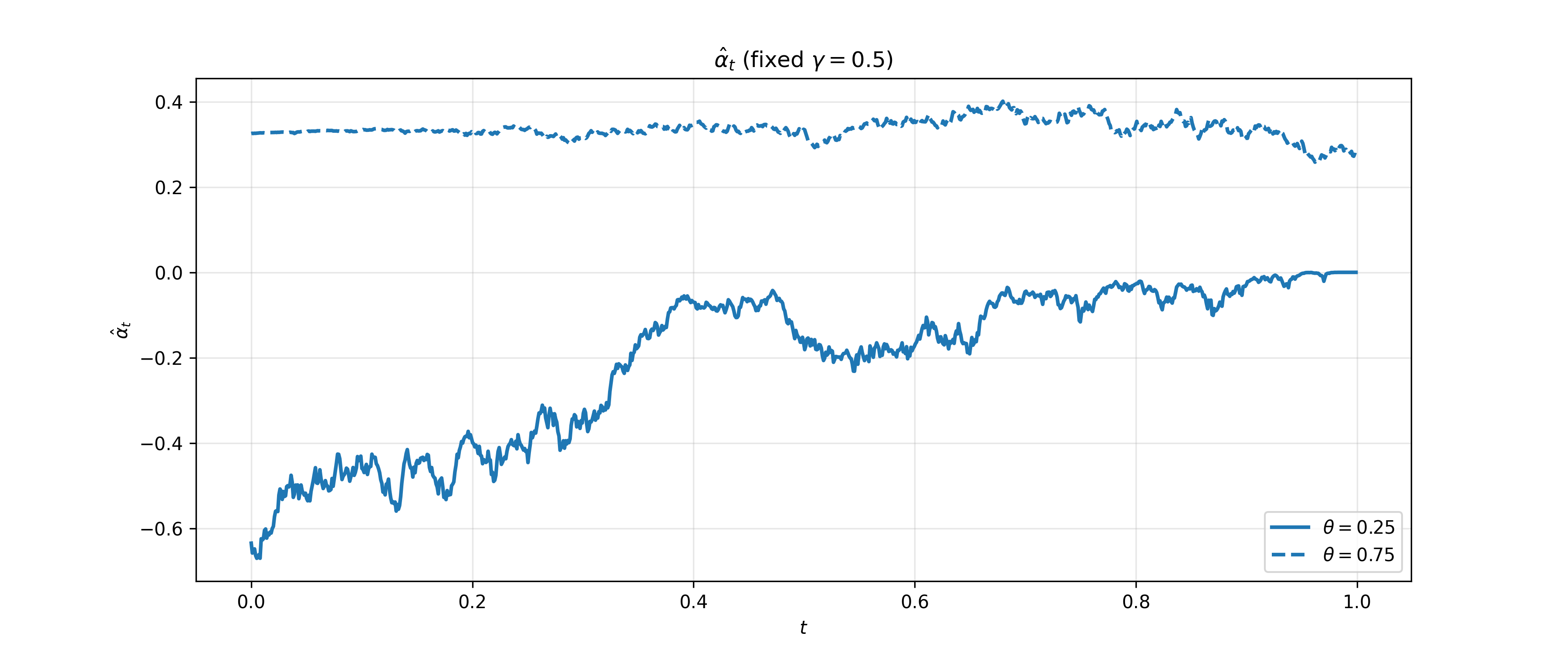}
    \end{minipage}
    \hfill
    \begin{minipage}[t]{0.48\textwidth}
        \centering
        \includegraphics[width=\textwidth]{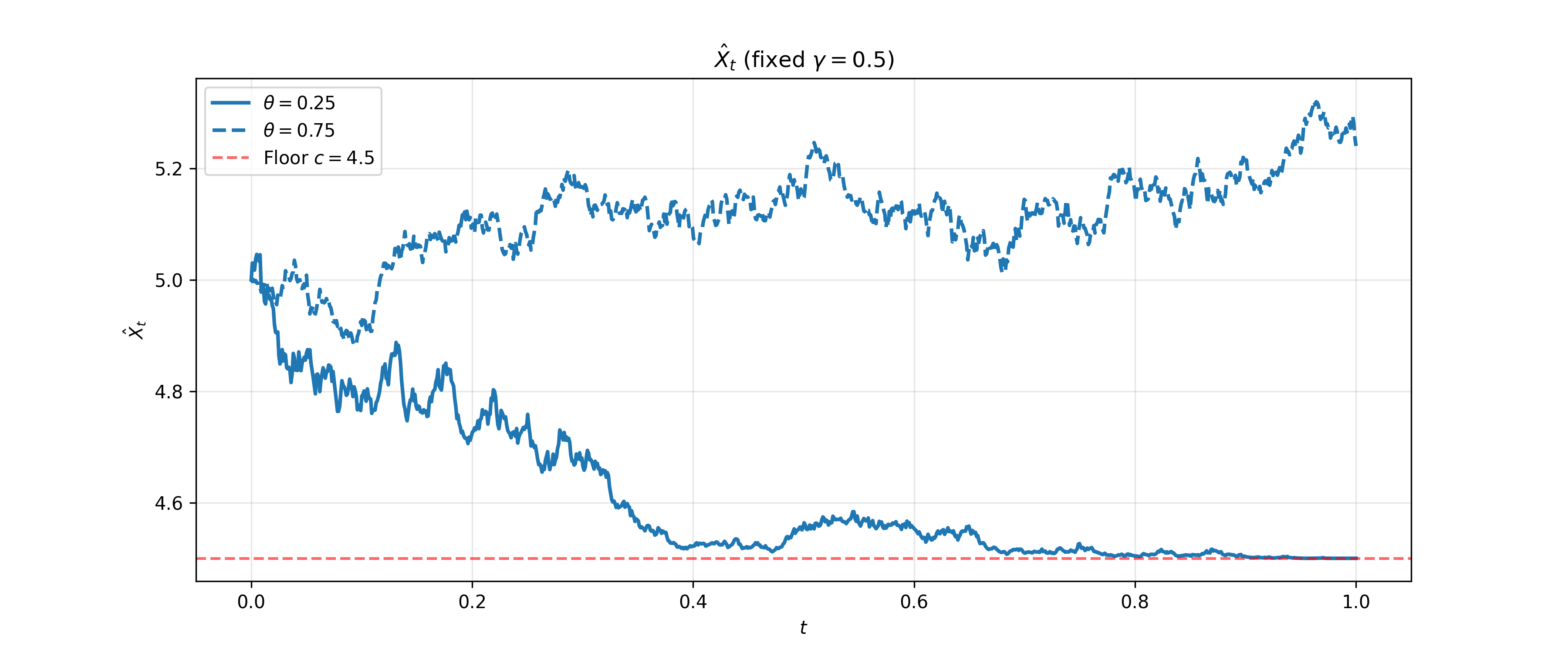}
    \end{minipage}

    \caption{Constrained optimal investment $\hat{\alpha}_t$ and corresponding wealth $\hat{X}_t$
    for parameters $b = 0.1$, $\sigma = 0.2$, $T = 1$, $a = 5$, $c = 4.5$, $\gamma = 0.5$
    and $\theta \in \{0.25,  0.75\}$.}
    \label{fig:theta_constrained_results}
\end{figure}

\begin{figure}[H]
    \centering
    \begin{minipage}[t]{0.48\textwidth}
        \centering
        \includegraphics[width=\textwidth]{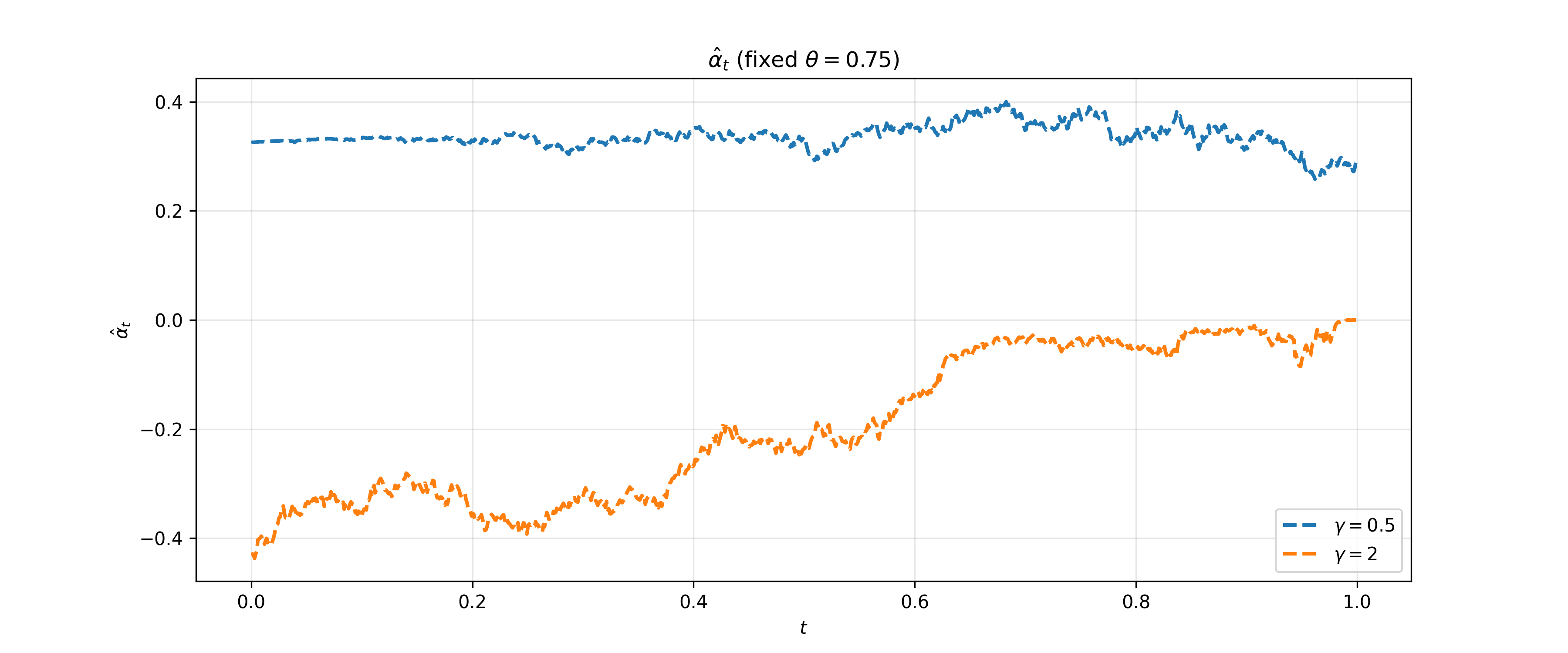}
    \end{minipage}
    \hfill
    \begin{minipage}[t]{0.48\textwidth}
        \centering
        \includegraphics[width=\textwidth]{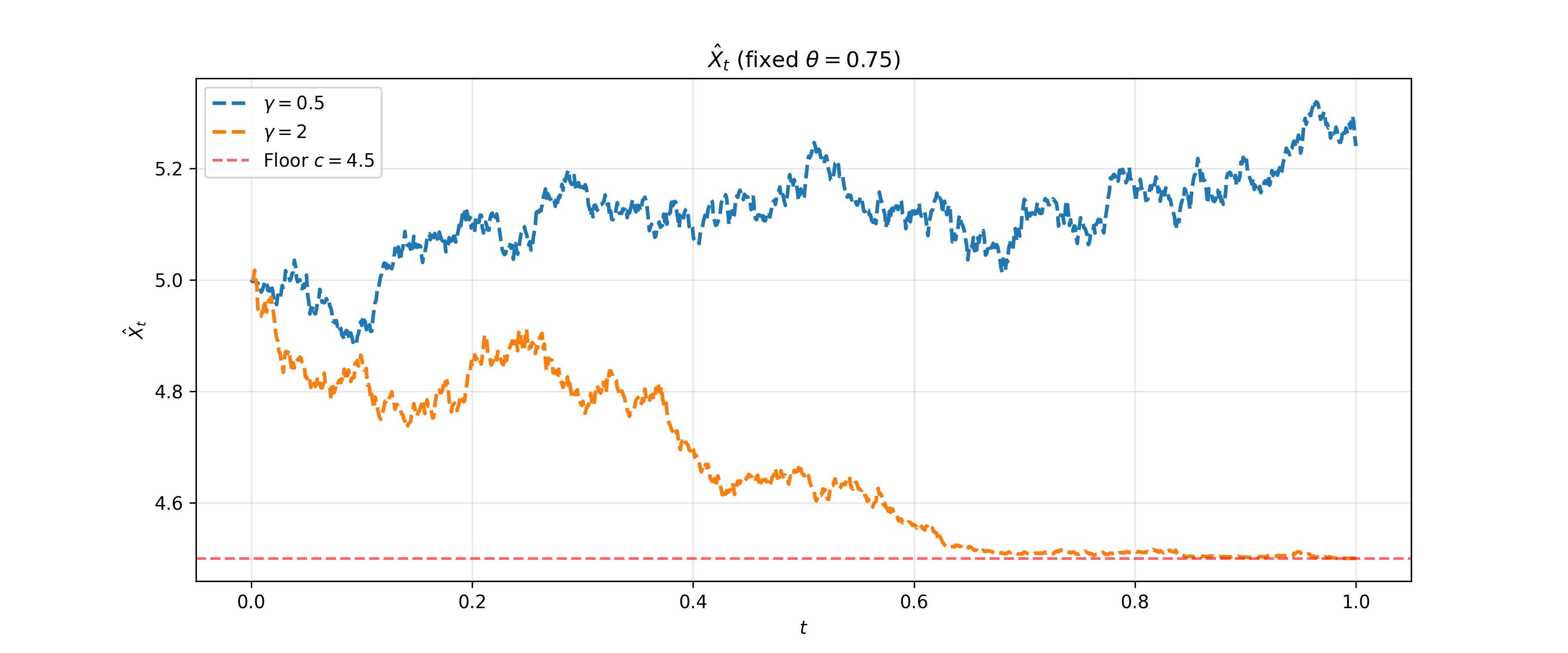}
    \end{minipage}

    \caption{Constrained optimal investment $\hat{\alpha}_t$ and corresponding wealth $\hat{X}_t$
    for parameters $b = 0.1$, $\sigma = 0.2$, $T = 1$, $a = 5$, $c = 4.5$,
    $\theta = 0.75$, and $\gamma \in \{0.5,  2\}$.}
    \label{fig:gamma_constrained_results}
\end{figure}

\begin{figure}[H]
    \centering
    
\includegraphics[width=0.5\textwidth]{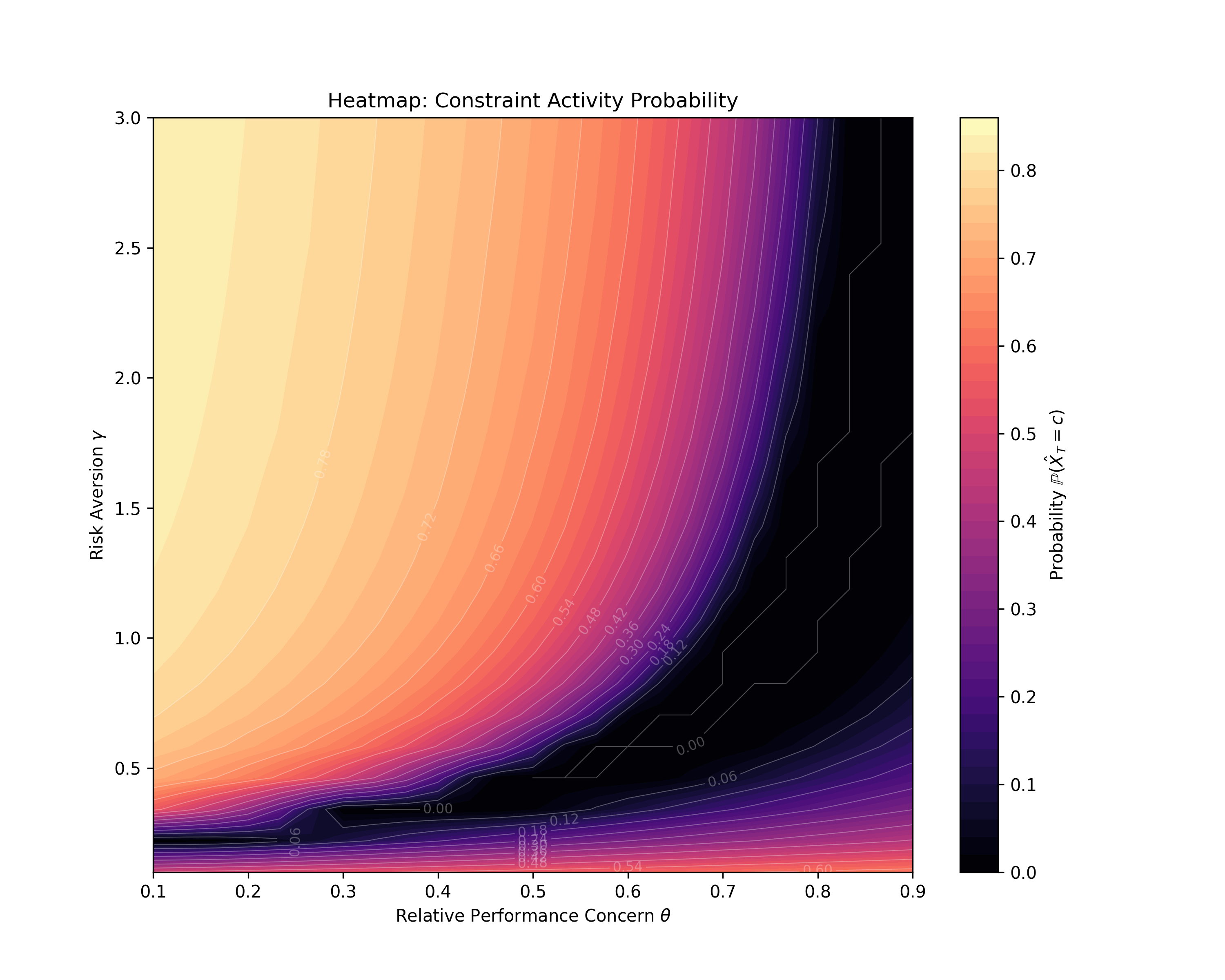}

    \caption{ Heatmap of constraint activity probability $\mathbb{P}(\hat{X}_T = c)$. This figure maps the probability $\mathbb{P}(\hat{X}_T = c)$ as a function of the risk aversion $\gamma$ and the relative performance concern $\theta$.}
    \label{fig:Heatmap}
\end{figure}

The heatmap in Figure~\ref{fig:Heatmap} provides a global sensitivity analysis of the terminal constraint activity probability $\mathbb{P}(\hat{X}_T = c)$ within the parameter space $(\theta, \gamma)$. A first feature of this plot is the diagonal corridor of low probability (near 0), identifying the stable regime where the optimal terminal state stays strictly above the threshold $c$. 
The figure identifies two regions where the constraint is more likely to bind: one corresponding to high risk aversion and low relative performance concern, and another corresponding to sufficiently strong relative performance concern. This confirms that the relevance of the terminal constraint depends on the balance between the parameters $\theta$ and $\gamma$.
\begin{figure}[H]
    \centering
\includegraphics[width=\linewidth,height=0.28\textheight,keepaspectratio]{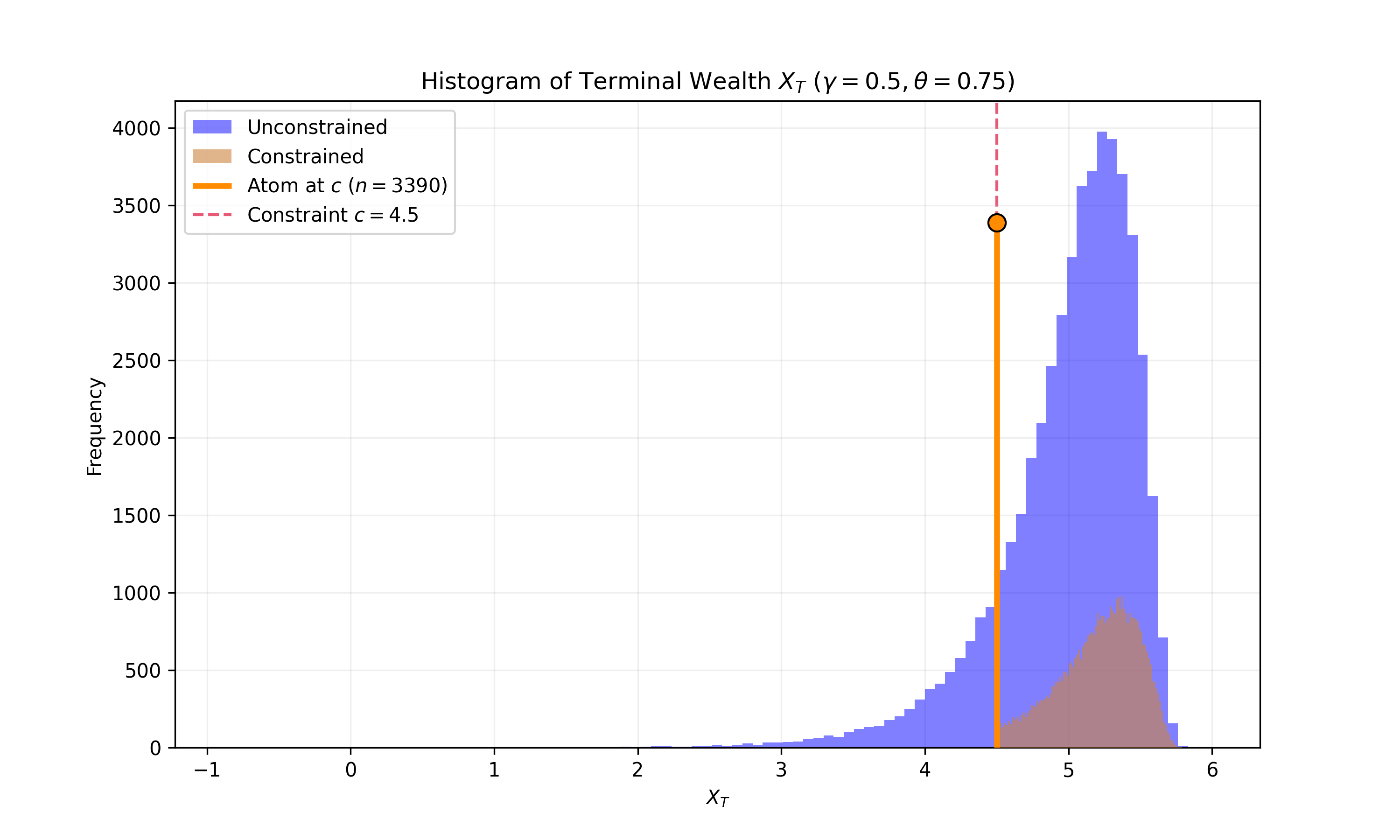}    

    \caption{ Distribution of terminal wealth $X_T$ under the constraint $c=4.5$. The histogram compares the unconstrained (blue) and constrained (orange) wealth distributions for the parameter set $\gamma=0.5, \theta=0.75$.}
    \label{fig:Histogram}
\end{figure}

Figure \ref{fig:Histogram} provides a useful comparison between the terminal wealth distributions under unconstrained and constrained regimes for $\gamma=0.5$ and $\theta=0.75$. The red dashed line denotes the terminal constraint $c=4.5$. The blue density represents the distribution of $X_T$ in the unconstrained case, where a significant portion of the lower tail violates the threshold. In contrast, the orange density represents the optimal terminal state $\hat{X}_T$ under the constraint. The figure clearly shows that the constraint truncates the lower tail of the distribution and creates a positive mass at the boundary $x=c$ (indicated by the vertical line and bullet). This  ensures that all realized trajectories terminate above $c$. The concentration of mass near the boundary 
$c$ reflects the fact that the constrained terminal state stays as close as possible to the unconstrained one while remaining above the threshold.

\begin{figure}[H]
    \centering
    \begin{minipage}[t]{0.48\textwidth}
        \centering
        \includegraphics[width=\textwidth]{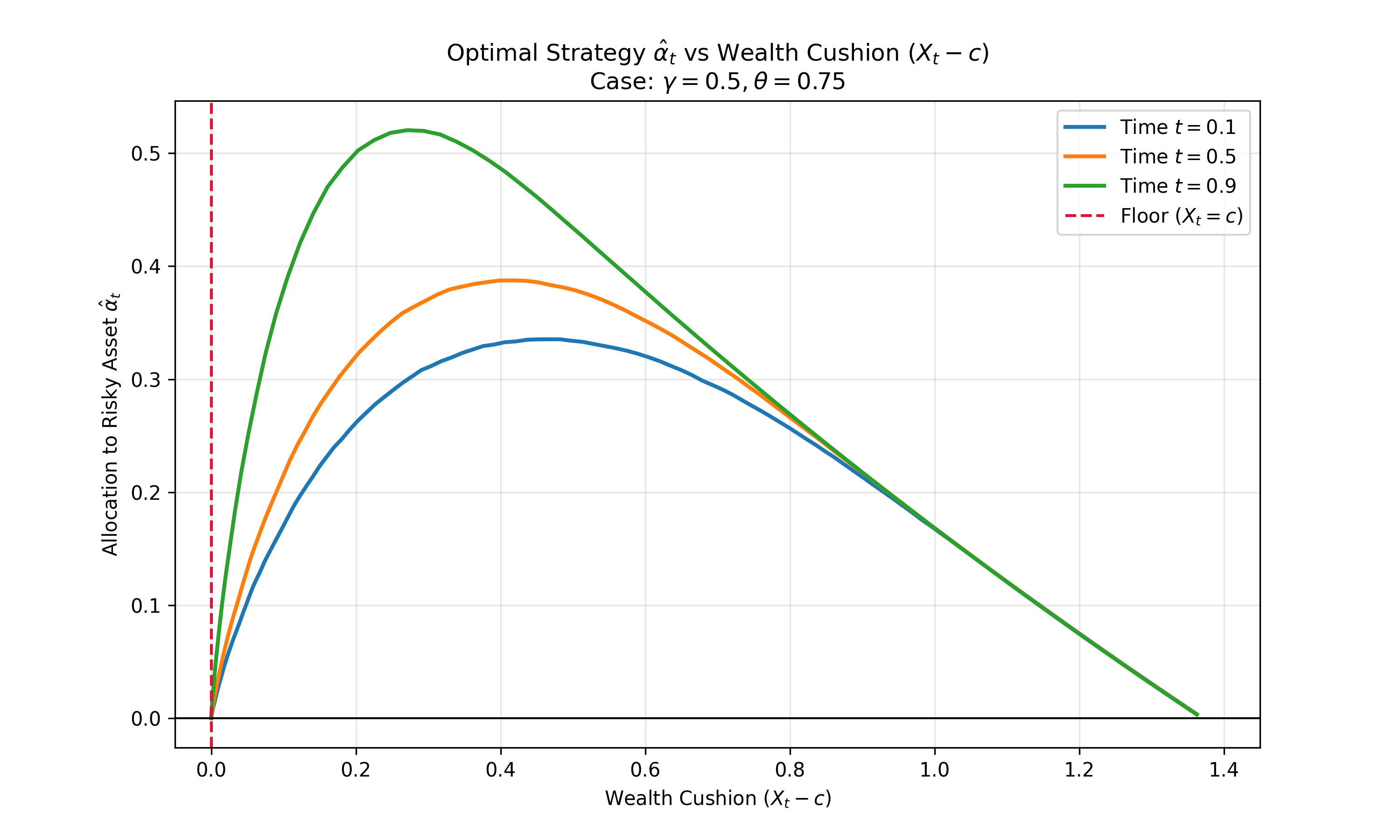}
    \end{minipage}
    \hfill
    \begin{minipage}[t]{0.48\textwidth}
        \centering
        \includegraphics[width=\textwidth]{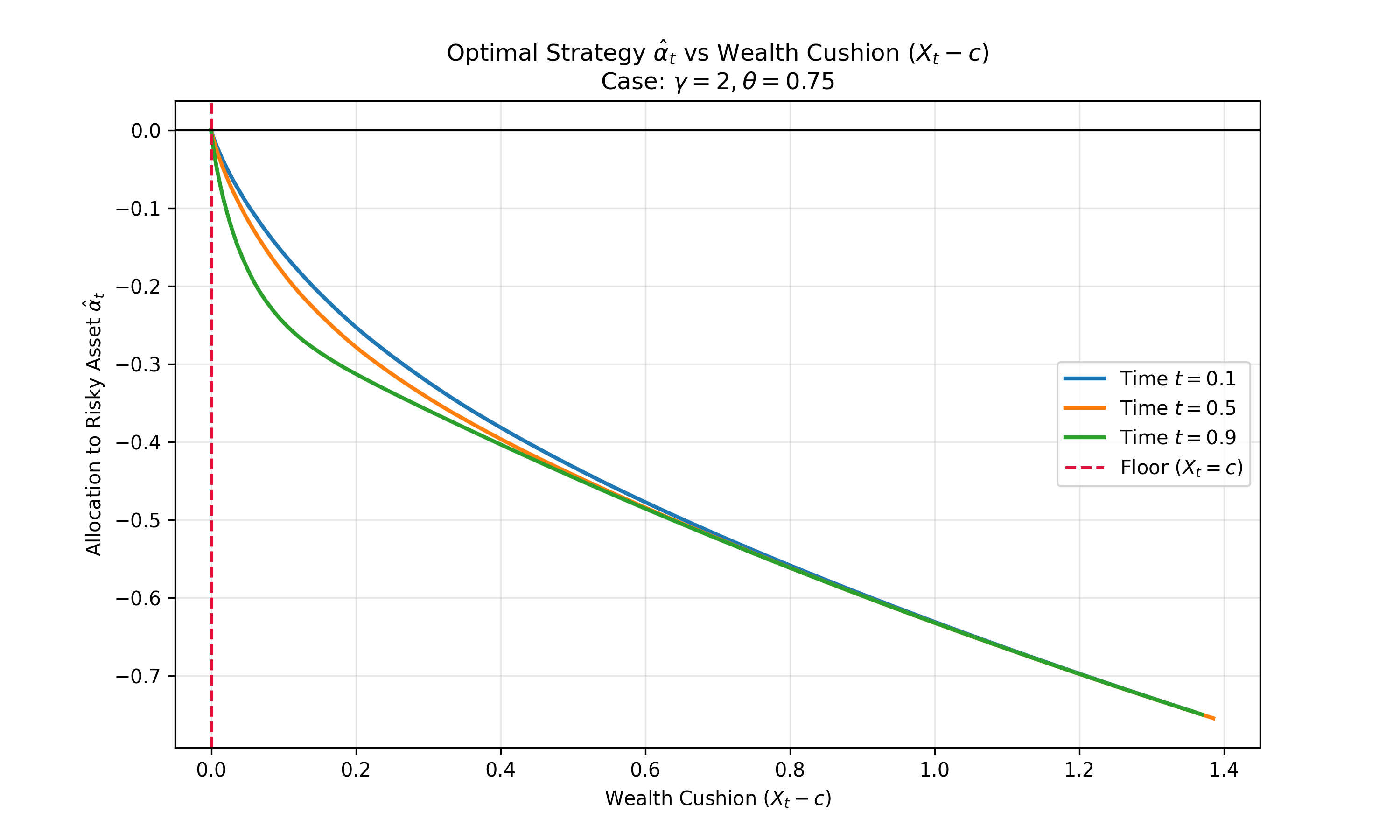}
    \end{minipage}

    \caption{Sensitivity of optimal strategy $\hat \alpha_t$ to Risk Aversion $\gamma$.
These panels illustrate the optimal allocation as a function of the wealth cushion $X_t-c$ for low risk aversion ($\gamma=0.5$, left) and high risk aversion ($\gamma=2$, right), with relative performance concern fixed at $\theta=0.75$. Each curve represents a fixed time slice $t \in \{0.1, 0.5, 0.9\}$.} 
    \label{fig:cushion}
\end{figure}

Figure 7 shows that risk aversion has a strong effect on the optimal investment strategy. When risk aversion is low $(\gamma = 0.5)$, the investor takes a positive position in the risky asset and is more willing to follow the group's performance. The position is largest when the wealth buffer $X_t-c$ is at a middle level, where the investor still has room to take risk but must also avoid falling below the minimum wealth level.
When risk aversion is high $(\gamma = 2)$, the investor focuses more on protecting wealth, so the risky position becomes negative across all time horizons. As the buffer gets smaller, the investor reduces risk, especially close to maturity, when there is less time to recover from losses. Overall, $\theta$ affects the desire to move away from the minimum level $c$, while $\gamma$ determines how cautious or aggressive the strategy is.

\paragraph{The case with pure common noise.}
We conclude by considering the case with common noise only, i.e., all agents trade in the same risky asset. In this case $\sigma=0$ and solving the corresponding MFG consists in the following procedure: 
\begin{enumerate}
\item For each fixed $\mathcal F_T ^0$-measurable random variable $m_T$, we solve the following maximization problem
\[ \sup_{\alpha \in \mathcal A}\mathbb E\left[ X_T -\frac{\gamma}{2}(X_T - \theta m_T)^2  \right], \]
subject to
\begin{equation}
    \label{sde oic}
    dX_t = \alpha_t X_t (b dt + \sigma^0 dW_t^0 ), \quad X_0=a >0,  
\end{equation}
under the constraint $X_T \geq c$, for a given threshold $c >0$.

\item Find a fixed point $\hat m_T = \mathbb E[\hat X _T| \mathcal{F}_T^0]$, where $\hat X$ is the state associated to the optimal control $\hat\alpha$ obtained in 1.
\end{enumerate}

In the following proposition we state a characterization of the unique MFG solution, which is the analogue of Proposition \ref{prop:application1} in the setting where all agents trade in the same asset.

Let us introduce some preliminary notation: let $\rho^0 := b/\sigma^0$ be the market price of risk, $Y^{(1)} := \mathcal E (0, -\rho^0)$.
\begin{prop}
\label{prop:pure_common_noise}
Let $\hat h_1 \in \mathbb R$ be the unique solution of the equation 
\[
G(h_1):= c -a+ \mathbb E\left[Y_T^{(1)}\left(\frac{1}{\gamma(1-\theta)}-c-\frac{h_1Y_T^{(1)}}{{\gamma(1-\theta)}}\right)^+\right]=0,\]
and let
\[\hat g^0(t, y):=\mathbb{E}\!\bigg[yY^{(1)}_{t,T} \bigg(\frac{1}{\gamma(1-\theta)}-c - \dfrac{\hat h_1}{\gamma(1-\theta)}yY^{(1)}_{t,T}\bigg)^+\bigg], \quad (t,y)\in [0,T] \times \mathbb R^+.\]
The unique MFG solution is given by the couple $(\hat \alpha, \hat m_T)$ with
\begin{equation}
    \label{opt-com-alpha}
    \hat\alpha_t=-\frac{\rho^0}{\sigma^0}
\dfrac{Y^{(1)}_t\partial_{y}\hat g^0(t,Y^{(1)}_t) - \hat g^0(t,Y^{(1)}_t)}{cY^{(1)}_t+\hat g^0(t,Y^{(1)}_t)} , \quad t\in [0,T),
\end{equation}
\begin{equation}
    \label{opt-com-m}
\hat m_T= c + \left(\frac{1}{\gamma(1-\theta)}-c - \frac{\hat h_1}{\gamma(1-\theta)} Y^{(1)}_T \right)^+ .
\end{equation}
\end{prop}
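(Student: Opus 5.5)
We follow the proof of Proposition \ref{prop:application1}, now with the common noise playing the role of the idiosyncratic one. First we pass to the amount invested, $\pi_t=\alpha_tX_t$, and set $z_t:=\sigma^0\pi_t$ and $\rho^0=b/\sigma^0$. The wealth then satisfies $dX_t=z_t(\rho^0dt+dW^0_t)$ with $X_0=a$. This dynamics is of the form (A6) with $b_0=0$, $c_0=0$ and $\tilde c_0=-\rho^0$. The terminal cost $\phi(x,m)=-x+\frac\gamma2(x-\theta m)^2$ has $\phi_x=-1+\gamma x-\gamma\theta m$, so (A7) holds with $k_0=-1$, $k_1=\gamma$ and $k_2=-\gamma\theta$. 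Since $\theta\in[0,1)$, we have $k_1+k_2=\gamma(1-\theta)>0$. By Theorem \ref{verification} and Remark \ref{remExun} (the pure common noise version of Theorem \ref{EXUN}), the problem reduces to the FBSDE \eqref{FBSDE_Cont CN l} with $Y=h_1Y^{(1)}$, where $Y^{(1)}=\mathcal E(0,-\rho^0)$. By \eqref{xirep}, the terminal value is $\xi=c+\big(\tfrac1\gamma-c+\theta m_T-\tfrac{h_1}{\gamma}Y^{(1)}_T\big)^+$.

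The key simplification is that here $Y^{(1)}_T$ is $\mathcal F^0_T$-measurable. Hence $\xi$ is itself $\mathcal F^0_T$-measurable, and the consistency condition $m_T=\mathbb E[\xi\mid\mathcal F^0_T]$ becomes the pointwise equation $m=\xi(m)$. We solve it $\omega$-wise. If $m>c$, the positive part is active, and solving $m=\frac1\gamma+\theta m-\frac{h_1}{\gamma}Y^{(1)}_T$ gives $m=\frac{1-h_1Y^{(1)}_T}{\gamma(1-\theta)}$. This is consistent exactly when $\frac{1}{\gamma(1-\theta)}-c-\frac{h_1Y^{(1)}_T}{\gamma(1-\theta)}>0$. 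Otherwise $m=c$, which is consistent in the complementary case. Writing $u:=m-c$, the two cases combine into
\[m_T=c+\Big(\tfrac{1}{\gamma(1-\theta)}-c-\tfrac{h_1}{\gamma(1-\theta)}Y^{(1)}_T\Big)^+,\]
which is \eqref{opt-com-m}. Uniqueness of this fixed point follows from the contraction $\Gamma_{h_1}$ of Theorem \ref{EXUN}, whose constant is $\theta<1$. Substituting back, we get $\xi=m_T$.

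Next we fix $h_1$ through $X_0=a$. Since $XY^{(1)}$ is a martingale and $\mathbb E[Y^{(1)}_T]=1$, this condition is exactly $G(h_1)=0$. We invoke Claim \ref{claim:h1} in its common noise form for existence and uniqueness. We can also check it directly: $G$ is continuous and nonincreasing in $h_1$, strictly decreasing where the integrand is nonzero, it tends to $c-a<0$ as $h_1\to+\infty$, and it tends to $+\infty$ as $h_1\to-\infty$.

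Finally, $X_t=c+\hat g^0(t,Y^{(1)}_t)/Y^{(1)}_t$ by the independence of $Y^{(1)}_t$ and $Y^{(1)}_{t,T}$. Itô's formula, applied to $\kappa(t,y)=\hat g^0(t,y)/y$ using the smoothness of $\hat g^0$ coming from the lognormal density, identifies $z_t=-\rho^0\big(\partial_y\hat g^0-\hat g^0/Y^{(1)}_t\big)$. Dividing by $\sigma^0X_t>0$ gives \eqref{opt-com-alpha}.

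Uniqueness of the MFG equilibrium is the step we expect to be the main obstacle, because $m$ is now random. We would repeat the $\lambda$-convexity contraction argument of Proposition \ref{prop:application1}, with $m_1,m_2\in L^2(\mathcal F^0_T)$ and $\Gamma(m)=\mathbb E[\xi^m\mid\mathcal F^0_T]$. The cross term becomes $\gamma\theta\,\mathbb E[(m_1-m_2)(\xi_1-\xi_2)]$. Since $m_i$ is $\mathcal F^0_T$-measurable, this equals $\gamma\theta\,\mathbb E[(m_1-m_2)(\Gamma(m_1)-\Gamma(m_2))]$. Combining Cauchy–Schwarz with the conditional Jensen inequality $\|\Gamma(m_1)-\Gamma(m_2)\|_{L^2}\le\|\xi_1-\xi_2\|_{L^2}$ then gives the contraction constant $\theta/(1-\varepsilon)<1$ in $L^2$. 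Proposition \ref{equivalence} transfers uniqueness to $(\hat\alpha,\hat m_T)$.
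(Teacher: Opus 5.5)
Your proposal is correct and follows essentially the paper's route. You obtain $\xi$ from \eqref{xirep} with $k_0=-1$, $k_1=\gamma$, $k_2=-\gamma\theta$, use the $\mathcal F^0_T$-measurability of $Y^{(1)}_T$ and $m_T$ to reduce the conditional fixed point to a pathwise projection equation that you solve explicitly, fix $h_1$ via $G(h_1)=0$, and identify the control by It\^o's formula. Your conditional version of the $\lambda$-convexity contraction for $m\in L^2(\mathcal F^0_T)$, via the tower property and conditional Jensen, correctly spells out the uniqueness step that the paper only calls analogous to Proposition \ref{prop:application1}.
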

\begin{proof}
Arguing as at the beginning of the proof of Theorem \ref{EXUN}, we obtain formula \eqref{xirep} with parameters $k_0 = -1$, $k_1 = \gamma$, $k_2 =-\gamma\theta$. This leads to the following expression for the terminal condition:
\begin{equation}
\label{xicommon}
    \xi=c+\left(\frac{-Y_T^{(h_1)}+1-\gamma c+\gamma \theta m_T}{\gamma}\right)^+.
\end{equation}
Since \(\sigma=0\), the state process generated by an admissible feedback depending on the common Brownian motion only is \(\mathcal F_t^0\)-adapted. Hence, at the fixed point, \(m_T=\mathbb E[X_T\mid\mathcal F_T^0]=X_T=\xi\). Therefore the conditional fixed-point equation reduces pathwise to
\[
\xi=c+\left(\frac{-Y_T^{(h_1)}+1-\gamma c+\gamma\theta\xi}{\gamma}\right)^+.
\]
Solving this scalar projection equation yields
\[
\xi^{(h_1)}=c+\left(\frac{1-Y_T^{(h_1)}}{\gamma(1-\theta)}-c\right)^+.
\]
By the same arguments as in the proof of Theorem \ref{EXUN}, we deduce that there exists a unique   $\hat h_1$  which solves the following  equation
\[G(h_1):=\mathbb E\left[\xi^{(h_1)}Y_T^{(1)} \right]-a=0.
\]  
The rest of the proof is analogous to that of Proposition \ref{prop:application1} and is omitted.
\end{proof}
\begin{rem}
   The solution of MFG with  common noise only and without the constraint $X_T \geq c$, is given by 
   \begin{equation*}
 m_T=\bigg(a-\frac{1}{\gamma(1-\theta)}\bigg)\psi^0(T)+\frac{1}{\gamma(1-\theta)},
\end{equation*}
and 
\begin{equation*}
    \begin{aligned}
       \alpha_t=-\dfrac{\rho^0 (a \gamma(1-\theta)-1)\psi^0(t)}{\sigma^0 ((a\gamma(1-\theta)-1)\psi^0(t)+1)},
    \end{aligned}
\end{equation*}
where
$$\psi^0(t):=\exp \left(-\frac{3}{2}(\rho^0)^2t - \rho^0  W_t^0\right).$$ The corresponding  wealth process ${X}_t$ is
 $$X_t=\bigg(a-\frac{1}{\gamma(1-\theta)}\bigg) \psi^0(t) +\frac{1}{\gamma(1-\theta)}.$$
\end{rem}

\paragraph{Numerics.}

For the pure common noise case, we use the same discretization and parameter values as in the previous subsection, replacing $\sigma$ by $\sigma^0=0.2$. Since all trajectories are driven by the same Brownian motion $W^0$, the simulated common-noise path is reused across all parameter configurations $(\gamma,\theta)$.

In the constrained MFG, the unknown parameter $\hat h_1$ is obtained by solving the scalar equation $G(h_1)=0$ with a bisection method. The expectations are approximated by Monte Carlo simulation with $50\,000$ samples. Once $\hat h_1$ is computed, the constrained wealth paths and feedback controls are evaluated along the simulated paths of $W^0$, with $\hat g^0$ and $\partial_y \hat g^0$ estimated by conditional Monte Carlo.

The simulations use $\gamma \in \{0.5,2\}$, $\theta \in \{0.25,0.75\}$, $a=5$, and $c=4.5$. Table \ref{tab:h1_residuals_common} reports the resulting values of $\hat h_1$; the residuals are of order $10^{-12}$ or smaller.

\begin{table}[h!]
\centering
\caption{Solution of $\hat h_1$ using a monotone bracketed method (bisection) with achieved residuals for the common noise case}
\begin{tabular}{ccccc}  
\hline
$\gamma$ & $\theta$ & $\hat h_1$ & Residual  \\
\hline
0.5 & 0.25 & -0.60 & $1.13\times 10^{-12}$ \\
0.5 & 0.75 &  0.39 & $ 1.86\times 10^{-12}$  \\
2.0 & 0.25 & -3.94 & $1.67\times 10^{-13}$  \\
2.0 & 0.75 & -0.98 & $ 2.19\times 10^{-13}$  \\
\hline
\end{tabular}
\label{tab:h1_residuals_common}
\end{table}

The comparison between the unconstrained control $\alpha_t$ and the constrained control $\hat{\alpha}_t$ under pure common noise (see Figure \ref{fig:commonalphapaths}) shows that the terminal constraint reduces the dispersion of the optimal policies near maturity. In the unconstrained case, the optimal strategies for various $(\gamma, \theta)$ scenarios remain approximately parallel over time, where the relative distances between policy responses remain constant over time due to the absence of idiosyncratic volatility. In contrast, the constraint $X_T \geq c$ disrupts this synchronization. As systemic shocks drive wealth toward the floor $c$, the optimal controls for all parameterizations are forced to converge toward a common limit to ensure solvency at maturity. This effect illustrates that the terminal constraint reduces these differences as the wealth approaches the floor $c$. 
\begin{figure}[H]
    \centering
\includegraphics[width=0.8\textwidth]{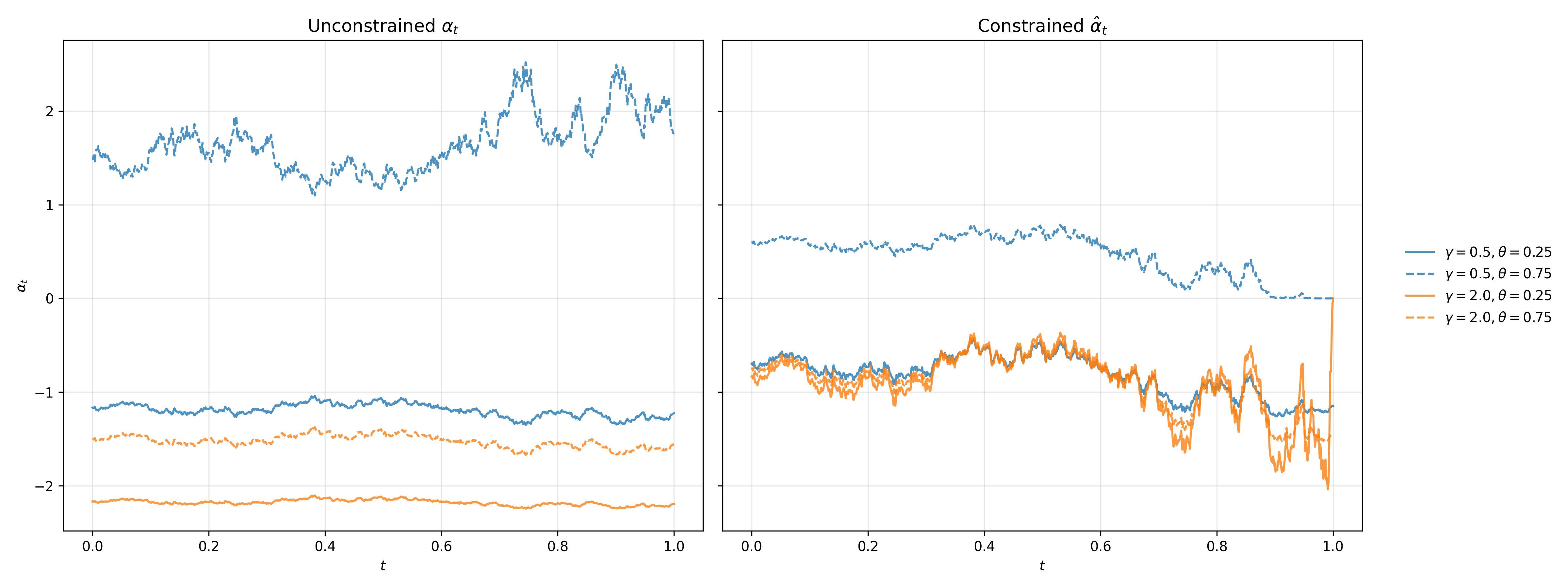}
   \caption{Comparison of optimal investment strategies in the constrained ($\hat{\alpha}_t$) and unconstrained ($\alpha_t$) MFG scenarios with common noise. Parameters: $b = 0.1$, $\sigma^0 = 0.2$, $T = 1$, $a = 5$, $c = 4.5$, $\theta \in \{0.25, 0.75\}$, and $\gamma \in \{0.5, 2\}$. }
    \label{fig:commonalphapaths}
\end{figure}

Figure \ref{fig:commonXpaths} highlights the stronger co-movement of wealth paths in the pure common noise case.
Since the same shock affects both individual wealth and the benchmark $m_T$, deviations from
the benchmark are smaller than under idiosyncratic noise. As a result, increasing risky exposure
raises terminal wealth while generating a smaller increase in the quadratic relative-performance
penalty, which helps explain why the optimal risky allocation can be higher in the common-noise
setting.

\begin{figure}[H]
    \centering
\includegraphics[width=0.8\textwidth]{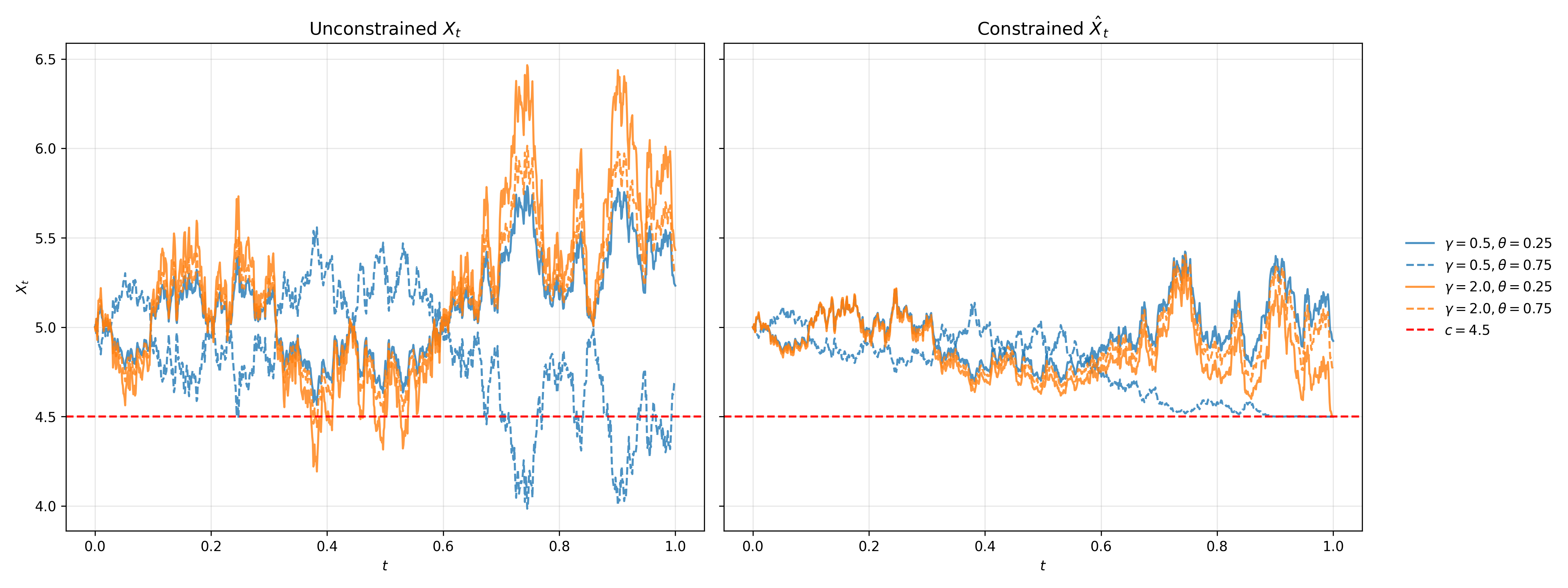}

    \caption{Comparison of optimal wealth paths in the constrained ($\hat{X}_t$) and unconstrained ($X_t$) MFG scenarios with pure common noise. The parameters are $b = 0.1$, $\sigma^0 = 0.2$, $T = 1$, $a = 5$, $c = 4.5$, with variations in $\theta \in \{0.25, 0.75\}$ and $\gamma \in \{0.5, 2\}$.}
    \label{fig:commonXpaths}
\end{figure}
In the pure common noise case, the comparative statics with respect to $\theta$ and $\gamma$ are the same as in the no common noise case: higher relative performance concern and lower risk aversion lead to larger risky allocations. Likewise, the constraint activity probability exhibits the same pattern  as in the no common noise case: there is a diagonal region where the constraint is mostly inactive, while outside this region the floor binds with positive probability. This again reflects the balance between relative performance incentives and the terminal wealth constraint. The same truncation effect is observed for the terminal wealth distribution. The constrained distribution has no mass below $c=4.5$ and displays an atom at the boundary, consistent with the constraint activity probability described above.

\begin{figure}[H]
    \centering
    \begin{minipage}[t]{0.48\textwidth}
        \centering
        \includegraphics[width=\linewidth,height=0.28\textheight,keepaspectratio]{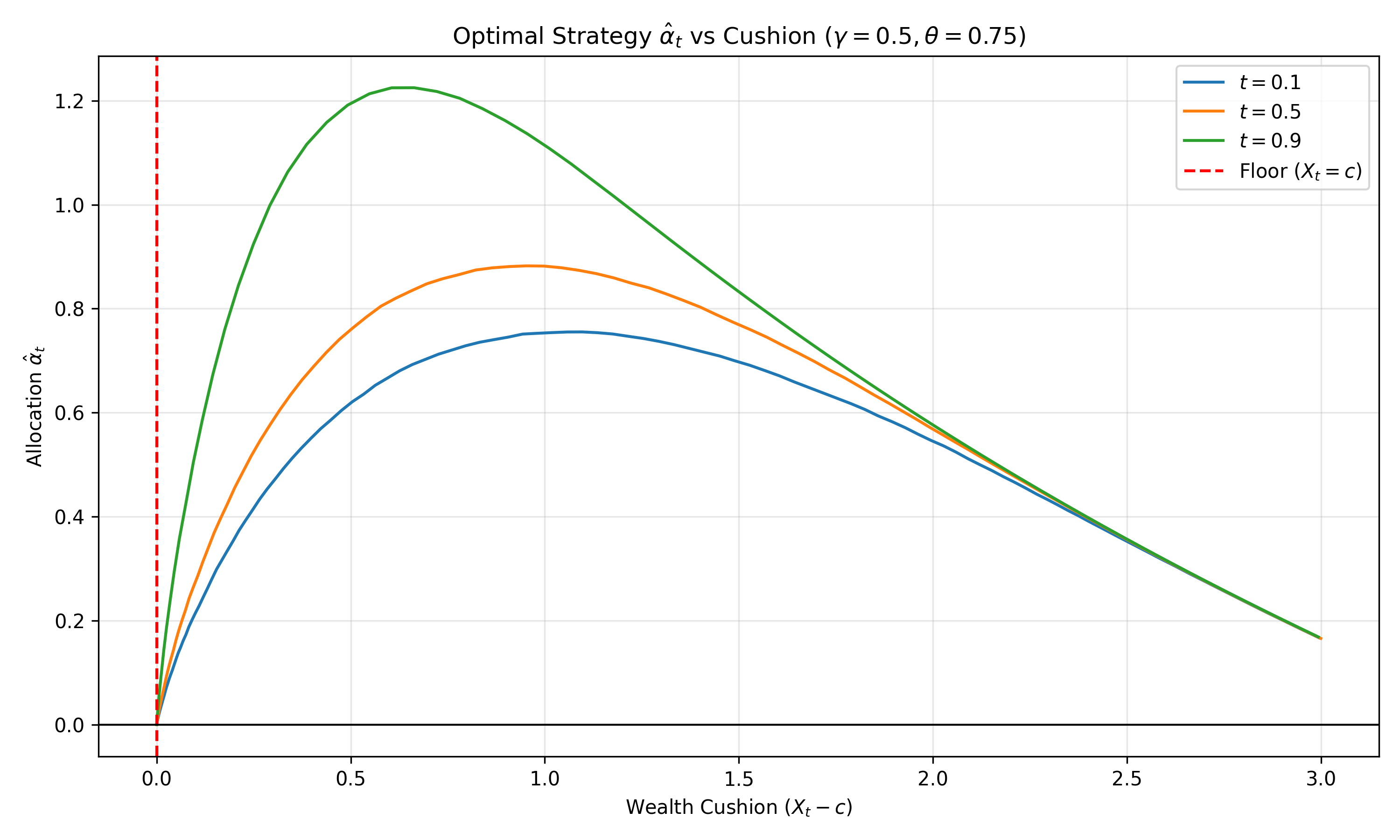}
    \end{minipage}
    \hfill
    \begin{minipage}[t]{0.48\textwidth}
        \centering
        \includegraphics[width=\textwidth]{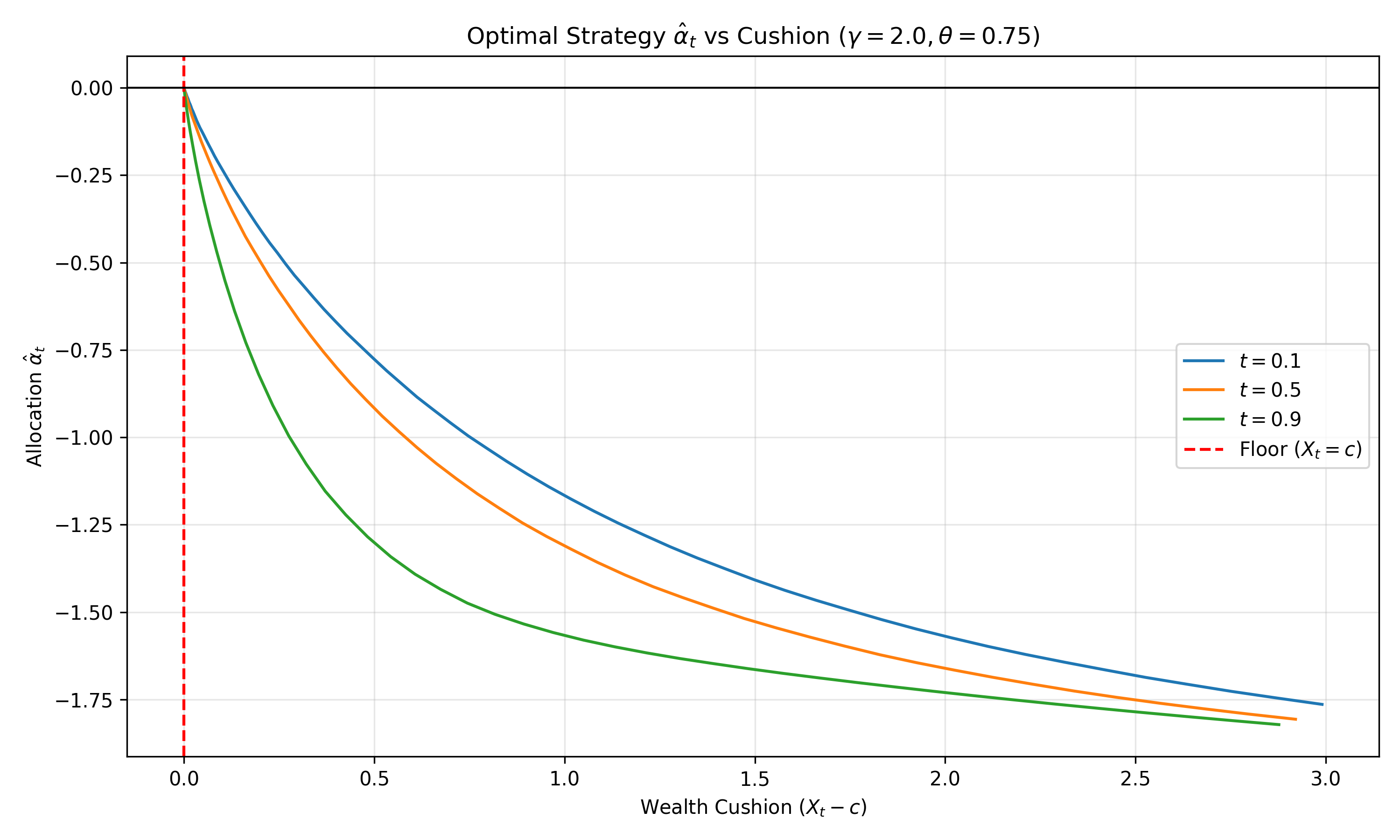}
    \end{minipage}

    \caption{Sensitivity of optimal strategy $\hat \alpha_t$ to Risk Aversion $\gamma$.
These panels illustrate the optimal allocation as a function of the wealth buffer $X_t-c$ for low risk aversion ($\gamma=0.5$, left) and high risk aversion ($\gamma=2$, right), with relative performance concern fixed at $\theta=0.75$, in the common noise case. Each curve represents a fixed time slice $t \in \{0.1, 0.5, 0.9\}$.}
    \label{fig:commoncushion}
\end{figure}
Finally, a comparison of Figures \ref{fig:cushion} and \ref{fig:commoncushion} shows that, for the same values of $\gamma$ and $\theta$,
risky allocations are higher in the pure common-noise case than in the idiosyncratic case.
This is consistent with the stronger co-movement between individual wealth and the benchmark
in the common-noise setting.

\appendix 
\renewcommand{\thesection}{Appendix A} 
\section{} 
\label{appendix}
\renewcommand{\thesection}{A} 
In this appendix, we provide the detailed proofs of the necessary and sufficient conditions in Theorem \ref{Necessary Sufficient Condition CN} together with the auxiliary results used in that proof, and the proof of Claim \ref{claim:h1}.\medskip

\noindent We start by observing that $(L^2 (\mathcal D), d(\cdot,\cdot))$ is a complete metric space, where
\[
d(\xi^1, \xi^2)=\left(\mathbb E|\xi^1-\xi^2|^2\right)^{\frac{1}{2}}, \quad \xi^1,\xi^2 \in L^2 (\mathcal D).
\]
Now, let $\hat \xi$ be $a$-optimal and let the corresponding state processes as in \eqref{BSDE CN} be denoted by $(\hat  X, \hat  q, \hat  z)$. For an arbitrary $\xi \in L^2 (\mathcal D)$ and for each $0 \leq \rho \leq 1$, $\hat  \xi+\rho(\xi-\hat  \xi) \in L^2 (\mathcal D)$. The state processes in \eqref{BSDE CN} associated to $\hat  \xi+\rho(\xi-\hat  \xi)$ are given by $(X^\rho, q^\rho, z^\rho)$.
The variational equation is defined as 
\begin{equation}
\label{variational equation CN}
    \begin{cases}
        -d(\Delta{X}_t)=\left(\hat f_x(t)\Delta{X}_t+\hat f_q(t)\Delta{q}_t+\hat f_z(t)\Delta{z}_t \right)dt-\Delta{q}_tdW_t-\Delta{z}_tdW_t^0\\
        \Delta{X}_T=\xi-\hat \xi
        \end{cases}
\end{equation}
where $\hat f_x(t)=f_x(t, \hat X_t, \hat q_t, \hat z_t, \mu_t)$, \  similarly for $\hat f_q(t)$ and $\hat f_z(t)$.

Under Assumptions (A1)-(A5) mentioned above, the BSDE \eqref{variational equation CN} has a unique solution   $(\Delta X, \Delta q,$ $ \Delta z) \in \mathcal S^2 _n \times \mathcal H^2 _{n\times d} \times \mathcal H^2 _{n\times d}$  \cite[Theorem 2.1, p.18]{Karoui Peng Quenez}, \cite[Theorem 3.1, p.58]{Pardoux}.
The associated variational cost functional is defined as
\begin{equation}
\label{variational cost CN}
\Delta{J}(\hat \xi;\mu)=\mathbb E\left[\int_0^T \left (\hat \ell_x(t)\Delta{X}_t+\hat \ell_q(t)\Delta{q}_t+\hat  \ell_z(t)\Delta{z}_t\right)dt+\phi_x(\hat \xi,\mu_T^X)\Delta{X}_T\right],
\end{equation}
where $\hat \ell_p (t)=\ell_p (t, \hat X_t, \hat q_t, \hat z_t, \mu_t)$ for $p\in \{x,q,z\}$. Moreover we set

$$\tilde{X}^\rho_t=\rho^{-1}({X}^\rho_t- \hat {X}_t)-\Delta{X}_t,$$
$$\tilde{q}^\rho_t=\rho^{-1}({q}^\rho_t-\hat {q}_t )-\Delta{q}_t,$$
$$\tilde{z}^\rho_t=\rho^{-1}(z^\rho_t-\hat z_t)-\Delta{z}_t .$$

\begin{lemma}
\label{convergence lemma}
Under Assumptions \emph{(A1)-(A5)}, we have the following convergence result:  
\end{lemma}
\begin{equation}
\label{convergence results CN}
    \begin{aligned}
\lim_{\rho \to 0} \left( \sup_{0 \leq t \leq T} \mathbb E\left[(\tilde{X}^\rho_t)^2 \right] + \mathbb E\left[\int_0^T|\tilde{q}^\rho_t|^2dt \right] + \mathbb E\left[\int_0^T|\tilde{z}^\rho_t|^2dt \right] \right) =0 .
    \end{aligned}
\end{equation}
\begin{proof} We consider the BSDE 
\begin{equation}
\label{tilde}
\begin{cases}
\begin{aligned}
    -d\tilde{X}_t^\rho &= \rho^{-1}(f(t, X_t^\rho,  q_t^\rho, z_t^\rho, \mu_t)-f(t, \hat X_t, \hat q_t, \hat z_t, \mu_t)\\
    &-\rho \hat f_x(t)\Delta{X}_t-\rho \hat f_q(t)\Delta{q}_t-\rho \hat f_z(t)\Delta{z}_t )dt-\tilde{q}_t^\rho dW_t-\tilde{z}_t^\rho dW_t^0\\
\end{aligned}\\
    \tilde{X}_T^ \rho=0
\end{cases}
\end{equation} 
which has a unique solution under the properties (B1)-(B2) and (B4) as shown in \cite{Karoui Peng Quenez}, \cite{Pardoux}.  
Setting
$$\Ab_t^\rho=\int_0^1 f_x(t, \hat X_t+\lambda\rho(\Delta{X}_t+\tilde{X}_t^\rho), \hat q_t+\lambda\rho(\Delta{q}_t+\tilde{q}_t^\rho), \hat z_t+\lambda\rho(\Delta{z}_t+\tilde{z}_t^\rho), \mu_t)d \lambda,$$
$$\Bb_t^\rho=\int_0^1 f_q(t, \hat X_t+\lambda\rho(\Delta{X}_t+\tilde{X}_t^\rho),  \hat q_t+\lambda\rho(\Delta{q}_t+\tilde{q}_t^\rho), \hat z_t+\lambda\rho(\Delta{z}_t+\tilde{z}_t^\rho), \mu_t )d \lambda,$$
$$\Cb_t^\rho=\int_0^1 f_z(t, \hat X_t+\lambda\rho(\Delta{X}_t+\tilde{X}_t^\rho), \hat q_t+\lambda\rho(\Delta{q}_t+\tilde{q}_t^\rho), \hat z_t+\lambda\rho(\Delta{z}_t+\tilde{z}_t^\rho), \mu_t)d \lambda,$$
\begin{equation}
\label{D}
    \Db_t^\rho=(\Ab_t^\rho- \hat f_x(t))\Delta {X}_t+(\Bb_t^\rho- \hat f_q(t))\Delta{q}_t+(\Cb_t^\rho- \hat f_z(t))\Delta{z}_t,
\end{equation}
the BSDE \eqref{tilde} can be written 
\begin{equation*}
\begin{cases}
 -d\tilde{X}_t^\rho =(\Ab_t^\rho \tilde{X}_t^\rho+\Bb_t^\rho \tilde{q}_t^\rho+\Cb_t^\rho \tilde{z}_t^\rho+ \Db_t^\rho )dt-\tilde{q}_t^\rho dW_t-\tilde{z}_t^\rho dW_t^0\\
\tilde{X}_T^ \rho=0.
\end{cases}
\end{equation*}  
By Ito's formula applied to $|\tilde{X}_s^\rho|^2$ over the interval $[t,T]$ and by Young's inequality, we have
\begin{align*}
    \mathbb E\left[|\tilde{X}_t^\rho|^2\right]&+\mathbb E \left[\int_t^T |\tilde{q}_s^\rho|^2ds \right] +\mathbb E \left[\int_t^T |\tilde{z}_s^\rho|^2ds\right]\\
    &=2\mathbb E \left[\int_t^T \tilde{X}_s^\rho (\Ab_s^\rho \tilde{X}_s^\rho+\Bb_s^\rho \tilde{q}_s^\rho+\Cb_s^\rho \tilde{z}_s^\rho+ \Db_s^\rho) ds \right]\\
    &\leq C \mathbb E \left[\int_t^T |\tilde{X}_s^\rho|^2ds \right] +\frac{1}{2}E\left[\int_t^T |\tilde{q}_s^\rho|^2ds\right]+\frac{1}{2}\mathbb E\left[\int_t^T |\tilde{z}_s^\rho|^2ds \right]+\frac{1}{2}\mathbb E \left[\int_t^T |\Db_s^\rho|^2ds\right],
\end{align*}
leading to
\begin{equation}
\label{estimatetilde}
    \mathbb E\left[|\tilde{X}_t^\rho|^2\right] + \frac{1}{2}\mathbb E \left[\int_t^T |\tilde{q}_s^\rho|^2ds \right]+\frac{1}{2} \mathbb E \left[\int_t^T |\tilde{z}_s^\rho|^2ds\right] \leq C \mathbb E \left[\int_t^T |\tilde{X}_s^\rho|^2ds\right] +\frac{1}{2} \mathbb E \left[\int_t^T |\Db_s^\rho|^2ds \right],
\end{equation} 
where $C>0$ is a constant that might change from line to line.
Notice that the inequality above and Fubini-Tonelli's theorem clearly imply
\[
\mathbb E\left[|\tilde X_t^\rho|^2 \right] \le C\int_t^T\mathbb E\left[|\tilde X_s^\rho|^2\right]ds + \frac12\mathbb E \left[\int_t^T|\Db_s^\rho|^2ds\right].
\]
Hence, applying the backward Gronwall inequality yields
\begin{equation}\label{eq:bound-intX}
\sup_{0\le t\le T}\mathbb E\left[|\tilde X_t^\rho|^2 \right] \le C\mathbb E\left[\int_0^T|\Db_s^\rho|^2ds \right].
\end{equation}
Next, taking \(t=0\) in the original estimate \eqref{estimatetilde} and using the bound \eqref{eq:bound-intX}, we obtain
\begin{align*}
\frac12\mathbb E\left[\int_0^T|\tilde q_t^\rho|^2dt \right]+ \frac12\mathbb E \left[\int_0^T|\tilde z_t^\rho|^2dt \right]& \le C\mathbb E \left[\int_0^T|\tilde X_t^\rho|^2 dt\right] + \frac12 \mathbb E \left[\int_0^T|\Db_t^\rho|^2dt \right]\\ &\leq C\mathbb E \left[\int_0^T|\Db_t^\rho|^2dt \right].
\end{align*}
Therefore,
\begin{equation}\label{eq:bound-supX}
\sup_{0\le t\le T}\mathbb E \left[|\tilde X_t^\rho|^2 \right] + \mathbb E \left[\int_0^T|\tilde q_t^\rho|^2dt \right]+\mathbb E \left[\int_0^T|\tilde z_t^\rho|^2dt \right]\le C\mathbb E \left[\int_0^T|\Db_t^\rho|^2dt \right].
\end{equation}
We recall the following estimate for Lipschitz BSDEs (see, e.g., \cite[Theorem 4.2.3, p.84]{Zhang}): There exists a constant $C>0$ such that
\[
\mathbb E\left[
\sup_{0\le t\le T}|X_t^\rho-\hat X_t|^2
+\int_0^T|q_t^\rho-\hat q_t|^2dt
+\int_0^T|z_t^\rho-\hat z_t|^2dt
\right]
\le
C\rho^2\mathbb E\left[|\xi-\hat\xi|^2\right].
\]
Hence, as $\rho\to0$, we have the following strong convergence in the respective solution spaces:
\[
X^\rho\to \hat X \quad \text{in } \mathcal S^2_n, \qquad
q^\rho\to \hat q \quad \text{in } \mathcal H^2_{n \times d}, \qquad
z^\rho\to \hat z \quad \text{in } \mathcal H^2_{n \times d}.
\]
After recalling
\[
\rho(\Delta X_t+\tilde X_t^\rho)=X_t^\rho-\hat X_t, \quad
\rho(\Delta q_t+\tilde q_t^\rho)=q_t^\rho-\hat q_t, \quad
\rho(\Delta z_t+\tilde z_t^\rho)=z_t^\rho-\hat z_t,
\] 
we can rewrite $\Ab_t ^\rho$ explicitly as:
\[
\Ab_t^\rho
=
\int_0^1
f_x\bigl(
t,
\hat X_t+\lambda(X_t^\rho-\hat X_t),
\hat q_t+\lambda(q_t^\rho-\hat q_t),
\hat z_t+\lambda(z_t^\rho-\hat z_t),
\mu_t
\bigr)d\lambda,
\]
and analogously for $\Bb_t^\rho$ and $\Cb_t^\rho$. 
To address the convergence of these integrals, we note that, for any sequence $\rho_n \to 0$, there exists a subsequence $\rho_{n_k} \to 0$ along which the following $dt \otimes d\mathbb{P}$-a.s. convergence holds:
\[
X_t^{\rho_{n_k}}\to \hat X_t, \qquad q_t^{\rho_{n_k}}\to \hat q_t, \qquad z_t^{\rho_{n_k}}\to \hat z_t \quad \text{as } k\to\infty.
\]
By the continuity of the partial derivatives $f_x, f_q, f_z$ with respect to the state variables, we obtain the pointwise convergence of the integrands. Applying the bounded convergence theorem to the $d\lambda$-integral yields:
\[
\Ab_t^{\rho_{n_k}}\to \hat f_x(t),\qquad
\Bb_t^{\rho_{n_k}}\to \hat f_q(t),\qquad
\Cb_t^{\rho_{n_k}}\to \hat f_z(t),
\quad dt \otimes d\mathbb{P}\text{-a.s. as } k\to\infty.
\]
By definition of $\Db_t^\rho$ in \eqref{D}, this immediately implies that for the chosen subsequence:
\[
\lim_{k\to\infty} |\Db_t^{\rho_{n_k}}|^2 = 0, \quad dt \otimes d\mathbb{P}\text{-a.s.}
\]
Furthermore, since the derivatives of $f$ are  bounded there exists a constant $C >0$ such that:
\[
|\Db_t^\rho|^2
\le C\left( |\Delta X_t|^2+|\Delta q_t|^2+|\Delta z_t|^2\right).
\]
Since the standard first-order variations satisfy $(\Delta X,\Delta q,\Delta z) \in \mathcal S_n^2 \times \mathcal H_{n\times d}^2 \times \mathcal H_{n\times d}^2$, the right-hand side of the inequality just above is $dt \otimes d\mathbb P$-integrable over $[0,T] \times \Omega$. 
Therefore, by the dominated convergence theorem, the subsequence converges to zero in $L^2(dt \otimes d\mathbb{P})$:
\[
\lim_{k\to\infty} \mathbb E\left[\int_0^T |\Db_t^{\rho_{n_k}}|^2dt\right] = 0.
\]
Since this holds for every chosen subsequence $\rho_{n_k}$, the entire directed family must converge to the same limit as $\rho \to 0$. Thus, we conclude:
\[
\lim_{\rho\to0}
\mathbb E\left[\int_0^T |\Db_t^\rho|^2dt\right]
=0.
\]
It follows from \eqref{eq:bound-supX} that
\[
\lim_{\rho\to0} \left( \sup_{0\le t\le T}\mathbb E\left[|\tilde X_t^\rho|^2\right] + \mathbb E\left[\int_0^T|\tilde q_t^\rho|^2dt\right]
+ \mathbb E\left[\int_0^T|\tilde z_t^\rho|^2dt\right]
\right) =0,
\]
which completes the proof.
\end{proof}

At this point, we use Ekeland's variational principle \cite{Ekeland} to study the optimization problem \eqref{backward CN}.
Given an optimal $\hat \xi$ as above, we introduce the mapping $F_\varepsilon :L^2 (\mathcal D) \to \mathbb R$ defined by 
\begin{equation}\label{F-epsilon}
F_\varepsilon(\xi)=\left(|X_0^\xi-a|^2+(J(\xi)-J(\hat \xi)+\varepsilon)_+ ^2\right)^{\frac{1}{2}},
\end{equation}
where $a$ is the given initial state and $\varepsilon$ is an arbitrary positive constant.
\begin{theo}
    We suppose \emph{(A1)-(A5)}. Let $\hat \xi$ be an $a$-optimal solution to \eqref{backward CN}. Then there exist $h_1 \in \mathbb{R}^n$ and $h_0 \in \mathbb{R}$ with $h_0 \geq 0$ and $|h_0|+|h_1|\neq 0$  such that the following variational inequality holds
    \begin{equation}
    \label{variational inequality CN}
        h_1 \Delta{X}_0 +h_0 \Delta{J}(\hat \xi) \geq 0.
    \end{equation}
\end{theo}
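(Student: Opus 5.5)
We follow the standard Ekeland scheme of Ji and Zhou. First, we check that $F_\varepsilon$ in \eqref{F-epsilon} is continuous on the complete metric space $(L^2(\mathcal D),d)$. This follows from the Lipschitz stability estimate for BSDEs already recalled in the proof of Lemma \ref{convergence lemma}: $X_0^\xi$ depends Lipschitz-continuously on $\xi$, and $J$ is continuous by the growth bounds (B3) together with the $\mathcal S^2\times\mathcal H^2\times\mathcal H^2$ estimates. Moreover $F_\varepsilon>0$ everywhere. Indeed, if $F_\varepsilon(\xi)=0$, then $X_0^\xi=a$ and $J(\xi)\le J(\hat\xi)-\varepsilon$, which contradicts the $a$-optimality of $\hat\xi$. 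Finally, $F_\varepsilon(\hat\xi)=\varepsilon\le\inf F_\varepsilon+\varepsilon$. Ekeland's principle then yields $\xi^\varepsilon\in L^2(\mathcal D)$ with
\[
d(\xi^\varepsilon,\hat\xi)\le\sqrt{\varepsilon},\qquad
F_\varepsilon(\xi)-F_\varepsilon(\xi^\varepsilon)+\sqrt{\varepsilon}\,d(\xi,\xi^\varepsilon)\ge0\quad\text{for all }\xi\in L^2(\mathcal D).
\]

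Next, we perturb around $\xi^\varepsilon$ using the convex variation $\xi^{\varepsilon,\rho}=\xi^\varepsilon+\rho(\xi-\xi^\varepsilon)$, which stays in $L^2(\mathcal D)$ by convexity of $\mathcal D$. Let $(\Delta X^\varepsilon,\Delta q^\varepsilon,\Delta z^\varepsilon)$ be the variational BSDE \eqref{variational equation CN} linearised at the state of $\xi^\varepsilon$, and let $\Delta J^\varepsilon$ be the analogous variational cost. Lemma \ref{convergence lemma}, applied with $\hat\xi$ replaced by $\xi^\varepsilon$ (its proof never uses optimality), gives
\[
X_0^{\xi^{\varepsilon,\rho}}-X_0^{\xi^\varepsilon}=\rho\,\Delta X_0^\varepsilon+o(\rho),\qquad
J(\xi^{\varepsilon,\rho})-J(\xi^\varepsilon)=\rho\,\Delta J^\varepsilon+o(\rho).
\]
The cost expansion additionally uses the mean-value form of $\ell$ and $\phi$ together with (B3) and dominated convergence, exactly as done for $\Db^\rho$.

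We then divide the Ekeland inequality by $\rho$ and let $\rho\downarrow0$. Since $F_\varepsilon(\xi^\varepsilon)>0$, the square root is differentiable there. Setting
\[
h_1^\varepsilon=\frac{X_0^{\xi^\varepsilon}-a}{F_\varepsilon(\xi^\varepsilon)},\qquad
h_0^\varepsilon=\frac{(J(\xi^\varepsilon)-J(\hat\xi)+\varepsilon)_+}{F_\varepsilon(\xi^\varepsilon)}\ge0,
\]
we obtain $|h_1^\varepsilon|^2+|h_0^\varepsilon|^2=1$ and
\[
h_1^\varepsilon\Delta X_0^\varepsilon+h_0^\varepsilon\Delta J^\varepsilon\ge-\sqrt{\varepsilon}\,d(\xi,\xi^\varepsilon).
\]
When $J(\xi^{\varepsilon,\rho})-J(\hat\xi)+\varepsilon\le0$, the positive part is flat, which only helps; this is handled by the usual case split on the sign.

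Finally, we let $\varepsilon\to0$. By compactness of the unit sphere in $\mathbb R^{n+1}$, along a subsequence $(h_1^\varepsilon,h_0^\varepsilon)\to(h_1,h_0)$ with $h_0\ge0$ and $|h_1|^2+h_0^2=1$, so $|h_0|+|h_1|\neq0$. We also have $\xi^\varepsilon\to\hat\xi$ in $L^2$. It remains to show $\Delta X_0^\varepsilon\to\Delta X_0$ and $\Delta J^\varepsilon\to\Delta J(\hat\xi)$; this is the main technical step. The coefficients $f_x^\varepsilon,f_q^\varepsilon,f_z^\varepsilon$ of the linear BSDE converge to $\hat f_x,\hat f_q,\hat f_z$ only in $dt\otimes d\mathbb P$-measure, not uniformly, while they stay bounded. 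We would therefore write the difference $\Delta X^\varepsilon-\Delta X$ as a linear BSDE with terminal value $\hat\xi-\xi^\varepsilon\to0$ and source $(f^\varepsilon_x-\hat f_x)\Delta X+\dots$. The a priori estimate \eqref{eq:bound-supX} combined with the subsequence/dominated-convergence argument of Lemma \ref{convergence lemma} then sends this to zero. For $\Delta J^\varepsilon$, the derivatives $\ell_p$ along $\xi^\varepsilon$ converge in $L^2$ by linear growth (B3) and Vitali/uniform integrability, so the products with the $L^2$-convergent variations converge. Passing to the limit gives \eqref{variational inequality CN}.
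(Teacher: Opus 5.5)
Your proposal is correct and follows the paper's proof step for step. It uses the same functional $F_\varepsilon$, Ekeland's principle with $\lambda=\sqrt{\varepsilon}$, the convex perturbation $\xi^{\varepsilon,\rho}$ expanded via Lemma~\ref{convergence lemma} at $\xi^\varepsilon$, the normalized multipliers $(h_1^\varepsilon,h_0^\varepsilon)$ on the unit sphere, and a compactness limit as $\varepsilon\to0$.

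You also justify three points the paper only asserts: positivity of $F_\varepsilon$ from $a$-optimality, the first-order expansion of $J$, and the convergence $\Delta X_0^\varepsilon\to\Delta X_0$, $\Delta J(\xi^\varepsilon)\to\Delta J(\hat\xi)$ via the linear BSDE for $\Delta X^\varepsilon-\Delta X$. Two small refinements: the $J$-expansion needs a Vitali/uniform-integrability argument rather than plain dominated convergence, since the bound on $\ell_x$ depends on $\rho$. And no sign case split is needed, because $v\mapsto v_+^2$ is $C^1$ with derivative $2v_+$.
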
 
\begin{proof}
   It is straightforward to verify that $F_\varepsilon $ is continuous on $L^2 (\mathcal D)$ and it satisfies 
\begin{align*} F_\varepsilon(\hat \xi) &=\varepsilon,\\
    F_\varepsilon(\xi)& >0, \ \textrm{for all } \xi \in L^2 (\mathcal D),\\
    F_\varepsilon(\hat \xi) & \leq \inf_{\xi \in L^2 (\mathcal D)}F_\varepsilon(\xi)+\varepsilon .
    \end{align*}
Then, by Ekeland's variational principle \cite{Ekeland}, there exists $\xi^\varepsilon \in L^2 (\mathcal D)$ such that
\begin{enumerate}[label=(\roman*)]
    \item $F_\varepsilon(\xi^\varepsilon)\leq F_\varepsilon(\hat \xi);$
    \item $d(\hat \xi,\xi^\varepsilon)\leq \sqrt{\varepsilon};$
    \item $F_\varepsilon(\xi)+\sqrt{\varepsilon}d(\xi,\xi^\varepsilon) \geq F_\varepsilon(\xi^\varepsilon)$ for all $\xi \in L^2 (\mathcal D)$.
\end{enumerate}
For any $\xi \in L^2 (\mathcal D)$, set  $\xi^{\varepsilon, \rho}=\xi^\varepsilon+\rho(\xi-\xi^\varepsilon)$, $ 0 \leq\rho \leq 1$. Let $(X^{\varepsilon,\rho},q^{\varepsilon,\rho}, z^{\varepsilon,\rho} )$ (resp. $(X^\varepsilon,q^\varepsilon, z^\varepsilon ) $) be the solution of \eqref{BSDE CN} under $\xi^{\varepsilon,\rho}$ (resp. $\xi^\varepsilon$ ), and $(\Delta{X}^\varepsilon,\Delta{q}^\varepsilon, \Delta{z}^\varepsilon )$ be the solution of \eqref{variational equation CN}, where $\hat \xi$ is replaced by $\xi^\varepsilon$. 

From (iii) we conclude
\begin{equation}
\label{third assertion CN}
F_\varepsilon(\xi^{\varepsilon, \rho})-F_\varepsilon(\xi^\varepsilon)+\sqrt{\varepsilon}d(\xi^{\varepsilon, \rho},\xi^\varepsilon) \geq 0.
\end{equation}
By  convergence results \eqref{convergence results CN} in Lemma \ref{convergence lemma},  we have
$$\lim_{\rho \to 0}\sup_{0 \leq t \leq T}\mathbb E\left[\rho^{-1}(X^{\varepsilon, \rho}_t-{X}^\varepsilon_t)-\Delta{X}^\varepsilon_t\right]=0,$$
and therefore
$$X^{\varepsilon, \rho}_0-{X}^\varepsilon_0\equiv X_0^{\xi^{\varepsilon, \rho}}-X_0^{\xi^\varepsilon}=\rho\Delta{X}^\varepsilon_0+o(\rho),$$
where $o(\rho)$ is understood in $L^1$-convergence.
By Taylor expansion, we have
\begin{equation}
\label{Taylor1}
    |X_0^{\xi^{\varepsilon, \rho}}-a|^2-|X_0^{\xi^\varepsilon}-a|^2=2( X_0^{\xi^\varepsilon}-a)(X_0^{\xi^{\varepsilon, \rho}}-X_0^{\xi^\varepsilon})+o(\rho)=2 \rho( X_0^{\xi^\varepsilon}-a) \Delta{X}^\varepsilon_0+o(\rho),
\end{equation}
\begin{equation}
\label{Taylor2}
   (J(\xi^{\varepsilon, \rho})-J(\hat \xi)+\varepsilon)_+^2-(J(\xi^\varepsilon)-J(\hat \xi)+\varepsilon)_+^2=2\rho (J(\xi^\varepsilon)-J(\hat \xi)+\varepsilon)_+\Delta{J}(\xi^\varepsilon)+o(\rho). 
\end{equation}
By \eqref{third assertion CN}, we have 
\begin{equation}
\label{10 CN}
    \frac{(F_\varepsilon(\xi^{\varepsilon, \rho}))^2-(F_\varepsilon(\xi^\varepsilon))^2}{F_\varepsilon(\xi^{\varepsilon, \rho})+F_\varepsilon(\xi^\varepsilon)}=F_\varepsilon(\xi^{\varepsilon, \rho})-F_\varepsilon(\xi^\varepsilon)\geq -\sqrt{\varepsilon}d(\xi^{\varepsilon, \rho},\xi^\varepsilon)=-\rho\sqrt{\varepsilon}d(\xi,\xi^\varepsilon).
\end{equation}
Dividing \eqref{10 CN} by $\rho$ and letting $\rho \to 0$, we obtain
\begin{align*}
    \lim_{\rho \to 0}\frac{F_\varepsilon(\xi^{\varepsilon, \rho})-F_\varepsilon(\xi^\varepsilon)}{\rho}
    &=\lim_{\rho \to 0}\frac{1}{F_\varepsilon(\xi^{\varepsilon, \rho})+F_\varepsilon(\xi^\varepsilon)}\frac{(F_\varepsilon(\xi^{\varepsilon, \rho}))^2-(F_\varepsilon(\xi^\varepsilon))^2}{\rho}&\\
    &=\lim_{\rho \to 0}\frac{2 \rho( X_0^{\xi^\varepsilon}-a) \Delta{X}^\varepsilon_0+2\rho (J(\xi^\varepsilon)-J(\hat \xi)+\varepsilon)_+\Delta{J}(\xi^\varepsilon)+o(\rho)}{\rho [F_\varepsilon(\xi^{\varepsilon, \rho})+F_\varepsilon(\xi^\varepsilon)]}\\
    &=\frac{( X_0^{\xi^\varepsilon}-a) \Delta{X}^\varepsilon_0+ (J(\xi^\varepsilon)-J(\hat \xi)+\varepsilon)_+\Delta{J}(\xi^\varepsilon)}{F_\varepsilon(\xi^\varepsilon)}\\
    &\geq -\sqrt{\varepsilon}d(\xi,\xi^\varepsilon)=-\sqrt{\varepsilon}\left(\mathbb E\left[|\xi-\xi^\varepsilon|^2\right]\right)^\frac{1}{2},
\end{align*}
where the second equality follows from the definition of $F_\varepsilon$, and equations \eqref{Taylor1}, \eqref{Taylor2}.
Hence 
\begin{equation}
\label{optimality condition}
    h_1^\varepsilon \Delta{X}_0^\varepsilon+h_0^\varepsilon \Delta{J}(\xi^\varepsilon) \geq -\sqrt{\varepsilon}\left(\mathbb E \left[|\xi-\xi^\varepsilon|^2\right]\right)^\frac{1}{2},
\end{equation}
where $$h_0^\varepsilon =\frac{(J(\xi^\varepsilon)-J(\hat \xi)+\varepsilon)_+}{F_\varepsilon(\xi^\varepsilon)}\quad \textrm{and}\quad h_1^\varepsilon =\frac{X_0^{\xi^\varepsilon}-a}{F_\varepsilon(\xi^\varepsilon)}.$$
Moreover, we have  $h_0^\varepsilon \geq 0$ and
\[
|h_0^\varepsilon |^2+|h_1^\varepsilon|^2=1,
\]
by the definition of $F_\varepsilon$. Then there exists a convergent subsequence of $(h_1^\varepsilon,h_0^\varepsilon)$, for $\varepsilon \to 0$, whose limit is denoted by $(h_1,h_0)$. Finally, when $\varepsilon \to 0$ we have $\Delta{X}_0^\varepsilon \to \Delta{X}_0$ and $\Delta{J}(\xi^\varepsilon) \to \Delta{J}(\hat \xi)$, which completes the proof.
\end{proof}
\begin{proof}[\textup{\textbf{Proof of Theorem~\ref{Necessary Sufficient Condition CN} (i)(Necessary Conditions)}}]

We apply Ito's lemma to $Y_t\Delta{X}_t$, which yields 
   \begin{align}
       d(Y_t \Delta{X}_t) 
       &= -Y_t \Big(\hat f_x(t)\Delta{X}_t+ \hat f_q(t)\Delta{q}_t+ \hat f_z(t)\Delta{z}_t\Big) dt+\Delta{q}_tY_t dW_t+\Delta{z}_t Y_t  dW_t^0 \nonumber\\
       &\quad +\Delta{X}_t \Big(\hat f_x(t) Y_t+h_0 \hat \ell_x(t)\Big) dt+\Delta{X}_t\Big( \hat f_q(t) Y_t+h_0 \hat \ell_q(t)\Big) dW_t\nonumber \\
       &\quad + \Delta{X}_t\Big( \hat f_z(t)Y_t+h_0 \hat \ell_z(t)\Big) dW_t^0+ \Delta{q}_t\Big( \hat f_q(t) Y_t+h_0 \hat \ell_q(t)\Big) dt \nonumber\\
       &\quad +\Delta {z}_t\Big( \hat f_z(t) Y_t+h_0\hat \ell_z(t)\Big) dt. \label{eq:YDeltaX}
   \end{align}
To handle the stochastic integrals above, we use a standard localization argument. Define the stopping times
\[
\tau_N= \inf\Big\{ t\in[0,T]:|X_t|+|\hat X_t|+|q_t|+|\hat q_t|+|z_t|+|\hat z_t|+|Y_t|+|M_2(\mu_t)|\geq N \Big\}\wedge T .
\]
Integrating both side of equality \eqref{eq:YDeltaX} over $[0,\tau_N]$, and taking the expectation, we have
\begin{equation}
\label{stopped}
   \mathbb E\left[Y_{\tau_N}\Delta X_{\tau_N}-Y_0\Delta X_0\right]
= \mathbb E \left[\int_0^{\tau_N}
\left(h_0\Delta X_t\hat \ell_x(t)+h_0\Delta q_t\hat \ell_q(t)+h_0\Delta z_t\hat \ell_z(t)\right)dt \right]. 
\end{equation}
To pass to the limit as $N \to \infty$ we show that each term in the stopped identity \eqref{stopped} is dominated by an integrable random variable.\\ 
Since $\tau_N\uparrow T$ a.s. and $Y,\Delta X$ have continuous sample paths,
\[Y_{\tau_N}\Delta X_{\tau_N}\to Y_T\Delta X_T \quad\text{a.s.}
\]
Moreover, the sequence $\{Y_{\tau_N} \Delta X_{\tau_N}\}_N$ is dominated by the product of the suprema of the processes:
\[
|Y_{\tau_N} \Delta X_{\tau_N}| \leq \left( \sup_{0 \leq t \leq T} |Y_t| \right) \left( \sup_{0 \leq t \leq T} |\Delta X_t| \right) =: G_1.
\]
By the Cauchy-Schwarz inequality and the fact that $Y, \Delta X \in \mathcal{S}^2_n$, we have
\[
\mathbb{E}[G_1] \leq \left( \mathbb{E} \left[\sup_{0\leq t\leq T} |Y_t|^2\right] \right)^{1/2} \left( \mathbb{E} \left[\sup_{0\leq t\leq T} |\Delta X_t|^2 \right] \right)^{1/2} = \|Y\|_{\mathcal{S}^2_n} \|\Delta X\|_{\mathcal{S}^2_n} < \infty.
\]
Since the sequence is dominated by the $L^1$ random variable $G_1$, the dominated convergence theorem yields
\[
\lim_{N \to \infty} \mathbb{E}[Y_{\tau_N} \Delta X_{\tau_N}] = \mathbb{E}[Y_T \Delta X_T].
\]
Consider the term involving $\hat{\ell}_q$. First notice that
\[
\left| \int_0^{\tau_N} h_0 \Delta q_t \hat{\ell}_q(t) dt \right| \leq \int_0^T |h_0 \Delta q_t \hat{\ell}_q(t)| dt =: G_2.
\]
Applying Cauchy-Schwarz and taking expectation lead to
\[
\mathbb{E}[G_2] \leq h_0 \left( \mathbb{E} \left[\int_0^T |\Delta q_t|^2 dt\right] \right)^{1/2} \left( \mathbb{E} \left[ \int_0^T |\hat{\ell}_q(t)|^2 dt \right] \right)^{1/2}.
\]
The first term on the right-hand side is finite since $\Delta q \in \mathcal{H}^2_{n \times d}$. The second term is finite due to the linear growth condition (B3) and the fact that $X \in \mathcal{S}^2_n, q, z \in \mathcal{H}^2_{n \times d}$, and $\mu$ satisfies $\mathbb E \left[\int_0^T M_2(\mu_t)^2 dt\right]<\infty$. Thus, $G_2 \in L^1$, and hence the dominated convergence theorem applies, yielding
\[
\mathbb E\left[\int_0^{\tau_N} h_0\Delta q_t\hat\ell_q(t)\,dt \right]
\to
\mathbb E \left[\int_0^T h_0\Delta q_t\hat\ell_q(t)\,dt\right].
\]
By treating similarly the terms in \eqref{stopped} involving $\hat\ell_z$  and $\hat\ell_x$, we obtain
\[
\mathbb{E}\left[Y_T \Delta X_T - Y_0 \Delta X_0\right] = \mathbb{E}\left[\int_0^T h_0 \Big(\Delta X_t \hat \ell_x(t) + \Delta q_t \hat \ell_q(t) + \Delta z_t \hat \ell_z(t) \Big)dt\right].
\]
Hence, by  \eqref{variational cost CN},
\[
\mathbb E\left[ Y_T(\xi-\hat \xi)-h_1\Delta{X}_0\right]=h_0\Delta{J}(\hat \xi)-\mathbb E\left[h_0\phi_x(\hat \xi,\mu_T^X)\Delta{X}_T\right].
\]
By the variational inequality \eqref{variational inequality CN}, there exist $h_1 \in \mathbb{R}^n$ and $h_0 \in \mathbb{R}$, with $h_0 \geq 0$ and $|h_0|+|h_1|\neq 0$, such that
\begin{equation}\label{ineq}
\mathbb E \left[ \big(Y_T +h_0\phi_x(\hat \xi,\mu_T^X)\big) \big(\xi-\hat \xi\big) \right]= h_1 \Delta{X}_0+h_0\Delta{J}(\hat \xi) \geq 0, \quad \textrm{for all }\xi \in L^2 (\mathcal D).
\end{equation}
It remains to show that the above inequality implies
\[
\big(Y_T +h_0\phi_x(\hat \xi,\mu_T^X)\big)\big( \xi-\hat \xi\big) \geq 0, \quad \textrm{for all }\xi \in L^2(\mathcal{D}).
\]
Assume, by contradiction, that there exist $\xi \in L^2(\mathcal D)$ and an event $A_0$, with $\mathbb P(A_0)>0$, such that $$\big(Y_T +h_0\phi_x(\hat \xi,\mu_T^X)\big)\big( \xi-\hat \xi\big) < 0$$ on $A_0$. By convexity of $\mathcal{D}$, the random variable $$\xi'=\xi \mathbbm{1} _{A_0}+\hat \xi \mathbbm{1} _{A_0^c} = \mathbbm{1} _{A_0}\xi+(1-\mathbbm{1} _{A_0})\hat \xi$$ 
belongs to $L^2(\mathcal{D})$ as well. Moreover, we have
\begin{equation*}
    \begin{aligned}
    \mathbb{E} \left[ \big( Y_T +h_0\phi_x(\hat \xi,\mu_T^X)\big)\big( \xi'-\hat \xi\big)\right]&=
     \mathbb{E} \left[ \big(Y_T +h_0\phi_x(\hat \xi,\mu_T^X)\big)\big( \mathbbm{1} _{A_0}\xi+(1-\mathbbm{1} _{A_0})\hat \xi-\hat \xi\big) \right]\\
     &=\mathbb{E} \left[ \big( Y_T +h_0\phi_x(\hat \xi,\mu_T^X)\big) \big(\xi-\hat\xi\big) \mathbbm{1} _{A_0} \right]< 0,  
    \end{aligned}
\end{equation*}
which contradicts inequality \eqref{ineq}. The proof is therefore complete.
\end{proof}

\begin{proof} [\textup{\textbf{Proof of Theorem~\ref{Necessary Sufficient Condition CN} (ii)(Sufficient conditions)}}]
By convexity of the Hamiltonian, we have
\begin{equation}
\label{Convexity Hamiltonian CN}
\begin{split}
     \int_0^T &\left(H(t,X_t,  q_t, z_t, \mu_t, Y_t, h_0)-H(t, \hat X_t,  \hat q_t, \hat z_t, \mu_t, Y_t,h_0)\right)dt \\ 
   & \geq \int_0^T  H_x(t, \hat X_t,  \hat q_t, \hat z_t, \mu_t,  Y_t, h_0)( X_t- \hat X_t ) dt \\
   & \quad  +\int_0^T  H_q(t, \hat X_t,  \hat q_t, \hat z_t, \mu_t, Y_t,h_0)( q_t- \hat q_t) dt\\
   & \quad +\int_0^T H_z(t, \hat X_t,  \hat q_t, \hat z_t, \mu_t, Y_t,h_0)( z_t- \hat z_t)dt\\
    & = \int_0^T\left( (X_t- \hat X_t)( \hat f_x(t) Y_t+h_0 \hat \ell_x(t))+(q_t-\hat q_t) (\hat f_q(t) Y_t+h_0 \hat \ell_q(t)) \right .\\
   & \quad \left. +(z_t-\hat z_t)(\hat f_z(t)Y_t+h_0 \hat \ell_z(t)) \right)dt. 
\end{split}
\end{equation}
    We define $\Delta X_t = X_t-\hat X_t $, which satisfies
\begin{equation}
\label{difference CN}
\begin{cases}
\begin{aligned}
    d(\Delta X_t)= \left(f(t, \hat X_t, \hat q_t, \hat z_t, \mu_t)-f(t,  X_t, q_t, z_t, \mu_t) \right)dt\\ +\,(q_t-\hat q_t)dW_t+(z_t-\hat z_t)dW_t^0
\end{aligned}\\
  
  \Delta X_T=\xi-\hat \xi, \quad \Delta X_0 =0.
\end{cases}
 \end{equation} 
By considering the adjoint SDE \eqref{adjoint equation CN} and the SDE \eqref{difference CN}, and applying integration by parts, we obtain 
\begin{align*}
    d(Y_t\Delta X_t )
    & = \bigg(\Delta X_t \big(\hat f_x(t) Y_t+h_0 \hat \ell_x(t)\big)+ (q_t-\hat q_t )\big( \hat f_q(t) Y_t+h_0 \hat \ell_q(t)\big)\\
    &\quad +(z_t-\hat z_t) \big( \hat f_z(t) Y_t+h_0 \hat \ell_z(t)\big)
    + Y_t \big(f(t, \hat X_t, \hat q_t, \hat z_t, \mu_t)-f(t,  X_t, q_t, z_t, \mu_t)\big)\bigg)dt\\
    &\quad +\bigg(\Delta X_t \big(\hat f_q(t) Y_t+h_0 \hat \ell_q(t)\big)+Y_t( q_t-\hat q_t)\bigg) dW_t \\ 
    &\quad +\bigg(\Delta X_t\big(\hat f_z(t) Y_t+h_0 \hat \ell_z(t)\big)+Y_t(z_t-\hat z_t)\bigg) dW_t^0.
\end{align*}
Arguing as in the proof of the necessary conditions, we obtain that the stochastic integral terms with respect to $dW$ and $dW^0$ are martingales. Hence, taking expectations on both sides leads to the following:
\begin{align}
\mathbb E \left[ Y_T ( \xi-\hat \xi ) \right] & =\mathbb E\left[ h_1( X_0^\xi-X_0^{\hat \xi})\right]+\mathbb E \left[\int_0^T\Delta X_t\big(\hat f_x(t) Y_t+h_0 \hat \ell_x(t)\big)dt  \right] \nonumber \\
 & \quad +\mathbb E\left[\int_0^T(q_t-\hat q_t )\big( \hat f_q(t) Y_t+h_0 \hat \ell_q(t)\big)dt \right] \nonumber\\
 &\quad +\mathbb E \left[\int_0^T(z_t-\hat z_t) \big( \hat f_z(t) Y_t+h_0 \hat \ell_z(t)\big) dt\right] \nonumber\\
 & \quad +\mathbb E \left[\int_0^T Y_t( f(t, \hat X_t, \hat q_t, \hat z_t, \mu_t)-f(t,  X_t, q_t, z_t, \mu_t) ) dt \right] \nonumber\\
 &\leq \mathbb E \left[\int_0^T \left(H(t,X_t,  q_t, z_t, \mu_t, Y_t,h_0)-H(t,\hat X_t, \hat q_t, \hat z_t, \mu_t,  Y_t,h_0) \right)dt \right] \nonumber\\
 &\quad +\mathbb E \left[\int_0^T Y_t \Big(f(t, \hat X_t, \hat q_t, \hat z_t, \mu_t)- f(t,  X_t, q_t, z_t, \mu_t)  \Big) dt \right] \nonumber\\
 &=\mathbb E \left[\int_0^T h_0 \Big( \ell(t,  X_t, q_t, z_t, \mu_t) - \ell(t, \hat X_t, \hat q_t, \hat z_t, \mu_t) \Big) dt \right]. \label{ineq2}
\end{align}
Recall that the cost functional is minimized over the set of $a$-feasible random variables, i.e., over all $\xi \in L^2_a (\mathcal D)$. This implies that  $X_0^\xi=a$, so that $\mathbb E\left[h_1 (X_0^\xi-X_0^{\hat \xi}) \right]=0$. Condition \eqref{optimality condition CN} and convexity of $\phi$ yield the following:
\begin{equation}
\label{inequality 1 CN}
\mathbb E\left[Y_T( \xi-\hat \xi ) \right] \geq -h_0\mathbb E \left[\phi_x(\hat \xi,\mu_T^X)(\xi-\hat \xi ) \right] \geq -h_0 \left(\mathbb E \left[\phi(\xi,\mu_T^X)\right] - \mathbb E \left[\phi(\hat \xi,\mu_T^X)\right]\right).
\end{equation}
Using the inequality \eqref{ineq2} above, we obtain
\begin{equation}
\label{inequality 2 CN}
 \mathbb E \left[ Y_T( \xi-\hat \xi ) \right]\leq
 \mathbb E \left[\int_0^T h_0 \Big( \ell(t, X_t, q_t, z_t, \mu_t) - \ell(t, \hat X_t, \hat q_t, \hat z_t, \mu_t)\Big) dt\right].
\end{equation}
Combining \eqref{inequality 1 CN} and \eqref{inequality 2 CN}, and using the fact that $h_0 > 0$, we finally get
$J(\hat \xi) \leq J(\xi)$, which ends the proof.
\end{proof}

\begin{proof}[\textup{\textbf{Proof of Claim~\ref{claim:h1}}}]
We prove that the equation \(G(h_1)=0\) admits exactly one solution. We first establish continuity. Let \(h_1^n\to h_1\), and write \(m_n:=m_T^{(h_1^n)}\), \(m:=m_T^{(h_1)}\), then , since \(Y_T^{(h_1^n)}=h_1^nY_T^{(1)}\to h_1Y_T^{(1)}=Y_T^{(h_1)}\) in \(L^2\), the fixed-point identities associated with \(\Gamma_{h_1^n}\) and \(\Gamma_{h_1}\), together with the contraction estimate \eqref{contractionestimate}, give
\[
(1-\rho)\|m_n-m\|_{L^2}
\le
\|\Gamma_{h_1^n}(m)-\Gamma_{h_1}(m)\|_{L^2}
\le
\frac1{k_1}\|Y_T^{(h_1^n)}-Y_T^{(h_1)}\|_{L^2}\to0\,,
\]
where \(\rho:=|k_2|/k_1<1\). Hence \(m_T^{(h_1^n)}\to m_T^{(h_1)}\) in \(L^2\), and the Lipschitz property of the positive part yields \(\xi^{(h_1^n)}\to \xi^{(h_1)}\) in \(L^2\). Since \(Y_T^{(1)}\in L^2\), Cauchy-Schwarz inequality gives \(G(h_1^n)\to G(h_1)\).

We next compute the two limits. In particular, let $h_1>0$ and set $r_{h_1}:=(m_T^{(h_1)}-c)/h_1\ge0$, then, dividing the fixed-point equation \textcolor{blue}{\eqref{conditionalmean}} by $h_1$ we obtain
\[
r_{h_1}=\mathbb E\left[\left(-\frac{Y_T^{(1)}}{k_1}-\frac{k_0+(k_1+k_2)c}{k_1h_1}-\frac{k_2}{k_1}r_{h_1}\right)^+\middle|\mathcal F_T^0\right],
\]
and since \(Y_T^{(1)}>0\) and \(\rho=-k_2/k_1<1\), it follows that
\[
0\le r_{h_1}\le \rho r_{h_1}+\frac{|k_0+(k_1+k_2)c|}{k_1h_1}\,,
\]
therefore \(\|r_{h_1}\|_{L^2}\to0\) as \(h_1\to+\infty\), and, by the formula defining \(\xi^{(h_1)}\), this implies \(\xi^{(h_1)}\to c\) in \(L^2\), consequently
\[
\lim_{h_1\to+\infty}G(h_1)=c\mathbb E[Y_T^{(1)}]-a<0.
\]
On the other hand, as \(h_1\to-\infty\), setting \(r=-h_1\to+\infty\), since \(m_T^{(h_1)}\ge c\) and \(-k_2\ge0\), there exists a deterministic constant \(C>0\) such that
\[
\xi^{(h_1)}\ge c+\left(\frac{rY_T^{(1)}-C}{k_1}\right)^+\,.
\]
Choosing \(\varepsilon>0\) such that \(A_\varepsilon:=\{Y_T^{(1)}>\varepsilon\}\) has positive probability, we have
\[
\mathbb E[\xi^{(h_1)}Y_T^{(1)}]
\ge
\left(c+\frac{r\varepsilon-C}{k_1}\right)\mathbb E[Y_T^{(1)}\mathbf 1_{A_\varepsilon}]\to+\infty\,,
\]
therefore \(\lim_{h_1\to-\infty}G(h_1)=+\infty\), and continuity gives the existence of at least one zero because \(a>c\mathbb E[Y_T^{(1)}]\).

Finally, to prove the uniqueness of the zero, we have that, if \(h_1<h_1'\), then \(\Gamma_{h_1}(m)\ge \Gamma_{h_1'}(m)\), for every \(m\in L^2(\mathcal F_T^0)\), and the maps \(\Gamma_h\) are increasing in \(m\) because \(k_2\le0\). Iterating \(\Gamma_{h_1'}\) from the fixed point of \(\Gamma_{h_1}\) gives
\[
m_T^{(h_1)}\ge m_T^{(h_1')}\quad\text{a.s.}\,,
\]
consequently \(\xi^{(h_1)}\ge \xi^{(h_1')}\) a.s., and \(G\) is non-increasing. Now, supposing that \(\widehat h_1\) is a zero, we have
\[
\mathbb E[\xi^{(\widehat h_1)}Y_T^{(1)}]=a>c\mathbb E[Y_T^{(1)}]\,,
\]
so that \(\mathbb P(\xi^{(\widehat h_1)}>c)>0\). If \(h_1>\widehat h_1\), the argument of the positive part defining \(\xi^{(h_1)}\) is strictly smaller than the one defining \(\xi^{(\widehat h_1)}\) on the set \(\{\xi^{(\widehat h_1)}>c\}\), hence
\[
G(\widehat h_1)-G(h_1)=\mathbb E[(\xi^{(\widehat h_1)}-\xi^{(h_1)})Y_T^{(1)}]>0\,,
\]
and \(G(h_1)<0\) for every \(h_1>\widehat h_1\). Analogously, if \(h_1<\widehat h_1\), then \(\xi^{(h_1)}>\xi^{(\widehat h_1)}\) on \(\{\xi^{(\widehat h_1)}>c\}\), so that \(G(h_1)>0\), and consequently the zero is unique.
\end{proof}

\end{document}